\documentclass[11pt,a4paper]{amsart}
\usepackage[margin=2.5cm]{geometry}                
\usepackage{centernot}
\usepackage{graphicx}
\usepackage{amssymb}
\usepackage{color}
\usepackage{bbm, dsfont}
\usepackage{hyperref}
\usepackage{upgreek}
\usepackage{relsize}

\hypersetup{
	colorlinks=true,
    linkcolor=red,
    citecolor=blue,
    filecolor=magenta,
	pdfborder={0 0 0},
	pdfborderstyle={/S/U/W 0},
}

\numberwithin{equation}{section}

\usepackage{verbatim}

\usepackage[dvipsnames]{xcolor}

\usepackage{ulem}
\newtheorem{theorem}{Theorem}[section]
\newtheorem{lemma}[theorem]{Lemma}

\newtheorem{proposition}[theorem]{Proposition}

\theoremstyle{definition}

\newtheoremstyle{myremark}
{}{}
{\itshape}
{}
{\itshape}
{.}
{ }
{}
\theoremstyle{myremark}

\theoremstyle{definition}
\newtheorem*{remark*}{Note}

\numberwithin{equation}{section}

\usepackage{mathrsfs}
\usepackage{verbatim}
\usepackage{amsmath,amssymb,amsthm}             
\usepackage{amsfonts}            
\usepackage{mathrsfs}            
\usepackage{algorithm}  
\usepackage{algorithmicx}  
\usepackage{algpseudocode}  
\usepackage{mathtools}
\usepackage{commath}
\usepackage{physics}
\usepackage{bm}
\usepackage{graphicx}

\usepackage{float}
\usepackage{listings}
\usepackage{subfigure}
\usepackage{multirow}
\usepackage{color}
\usepackage[export]{adjustbox} 
\usepackage{enumerate}
\usepackage{bbm}

\newcommand{\eps}{\varepsilon}
\newcommand{\mphi}{\widetilde\varphi}
\newcommand{\mB}{\widetilde B}
\newcommand{\Z}{\mathbb Z^2}
\newcommand{\mZ}{\widetilde{\mathbb Z}^2}
\newcommand{\R}{\mathbbm{R}}
\newcommand{\A}{\mathbbm{A}}

\newcommand{\cC}{\mathcal{C}}

\renewcommand{\P}{\mathbbm{P}}
\newcommand{\wt}{\widetilde}

\newcommand{\Lc}{\mathcal{L}}

\newcommand{\Dc}{\mathcal{D}}
\newcommand{\connect}[1]{\overset{#1}\longleftrightarrow}
\newcommand{\disconnect}[1]{\overset{#1}{\centernot\longleftrightarrow}}

\usepackage{fancyhdr,ifoddpage}
\fancypagestyle{runningheads}{  \fancyhf{}      \fancyhead[L]{    \checkoddpage    \ifoddpage
      Y.~Bi, Y.~Gao, X.~Li    \else
      One-arm probabilities for 2D GFF    \fi
  }  \fancyhead[R]{\thepage}}

\begin{document}	
	
	\title{One-arm probabilities for the two-dimensional metric-graph and discrete Gaussian free field}
	
	\author{Yijie Bi$^1$}
		\thanks{$^1$School of Mathematical Sciences, Peking University, \href{mailto:2200010925@stu.pku.edu.cn}{2200010925@stu.pku.edu.cn}}

	\author{Yifan Gao$^2$}
		\thanks{$^2$Institute for Theoretical Sciences, Westlake University, \href{mailto:gaoyifan75@westlake.edu.cn}{gaoyifan75@westlake.edu.cn}}

	\author{Xinyi Li$^3$}
		\thanks{$^3$Beijing International Center for Mathematical Research, Peking University, \href{mailto:xinyili@bicmr.pku.edu.cn}{xinyili@bicmr.pku.edu.cn}}

\begin{abstract}
    We study the one-arm probability in the level-set percolation of the discrete and metric-graph Gaussian free field (GFF) defined on a box with Dirichlet boundary conditions. For the metric-graph case, we establish asymptotic estimates on two one-arm probabilities of interest. For the discrete case, we show up-to-constants bounds on the point-to-bulk probability and demonstrate its difference from the metric-graph case.
\end{abstract}

\maketitle

\section{Introduction}
The Gaussian free field (GFF) is the canonical massless Gaussian model of a random function on a graph or domain and plays a central role in probability, statistical physics, and conformal field theory; its level sets (excursion sets) therefore provide a natural and rich model for studying geometric phase transitions and connectivity properties.

The study of level-set percolation of Gaussian free field has a long history; see for instance \cite{lebowitz1986percolation,bricmont1987percolation}. The long-range nature of the model makes this problem particularly interesting, but also difficult to analyze. In three and higher dimensions, much progress has been made in the last decade. To name but a few: finiteness \cite{rodriguez2013phase} and strict positivity \cite{drewitz2018sign} of the critical level, large deviation estimates of the disconnection probability \cite{sznitman2015disconnection}; (stretched)-exponential decay of off-critical one-arm probability \cite{popov2015decoupling,goswami2022radius}, sharpness of phase transition \cite{duminil2023equality}, etc. The ``continuum'' variant of this model, namely the metric-graph GFF, exhibits nice integrability feature (e.g.\
 the explicit two-point function of the sign cluster \cite{lupu2016loop} and the cluster capacity functional \cite{drewitz2022cluster}), which allows to identify the critical level, and understand the critical and near-critical behavior of this model (e.g.\ critical exponents \cite{drewitz2023critical}, arm probabilities \cite{drewitz2023arm, cai2025one, drewitz2025critical, cai2024one, cai2025heterochromatic}, cluster volume \cite{drewitz2024cluster}, chemical distance \cite{DW20}, crossing probability \cite{ding2022crossing}, quasi-multiplicativity \cite{cai2024quasi}, incipient infinite clusters \cite{cai2024incipient}, switching identity \cite{werner2025switching}).

The level sets of GFF in two dimensions have also been studied intensively in recent years.
The work \cite{ding2018chemical} considers chemical distances in terms of bulk annulus-crossing of the level set, while \cite{ding2022crossing} considers left-to-right crossing of the sign clusters for both discrete and metric-graph GFF in a rectangular box. In \cite{aru2020first}, the scaling limit of certain natural interfaces of 2D metric-graph GFF is identified as ${\rm SLE}_4$ level lines.
In \cite{drewitz2024percolation}, upper and lower bounds are established for the critical threshold with respect to the boundary annulus-crossing event under the discrete setting. There are also series of works aimed at understanding the two-sided level sets of 2D GFF \cite{gao2022chemical,aru2025percolation,gao2024percolation} and the arm probabilities \cite{gao2024percolationb,bi2025arm}. Despite all the existing literature, the level-set percolation of 2D GFF is still quite far from being well understood. 

In this work, we establish various results regarding one-arm probabilities for both 2D metric-graph and discrete GFF. For the metric graph, we obtain the asymptotics of the point-to-bulk probability in Proposition \ref{thm:metric_graph_bulk}. We also establish an upper bound on the point-to-boundary probability at non-negative levels that differs from previously-known asymptotics for negative levels, revealing a phase transition at level 0. For the discrete GFF, we show that the point-to-bulk probability is of the same order as the corresponding metric-graph quantity, but establish a gap in the pre-factor between the two cases.
We would like to emphasize that the asymptotics we established in both cases are also explicit in the level $h$.

\subsection*{Notation}

We begin with necessary notation and refer readers to Section \ref{sec:prelim} for precise definitions. For $x\in \mathbb{Z}^2$, we use $|x|_\infty$ to denote its $l_\infty$ norm.
For any $N\in\mathbb{N}$, let $B_N:=\{x\in\mathbb{Z}^2: |x|_\infty\leq N\}$ be the box centered at $0$ with side length $2N$, and let $\mB_N$ denote its metric-graph version obtained by joining each pair of neighboring vertices with a line segment of length $2$, following the convention of \cite{lupu2016loop}. We use $\partial_i B_N$ and $\partial B_N$ to denote the inner and outer boundaries of $B_N$, respectively. 

Let $\mphi_N$ be the GFF on the metric-graph $\wt B_{N+1}$ with Dirichlet boundary conditions (i.e., the values of $\mphi_N$ on $\partial_i B_{N+1}$ and line segments joining vertices in it are pinned to $0$). Let $\varphi_N$ denote the discrete GFF on $B_{N+1}$, defined by the restriction of $\mphi_N$ to $B_{N+1}$. For all $h\in\mathbb R$, we denote the level set of $\varphi_N$ by $E_N^{\ge h}:=\{x\in B_N:\varphi_{N,x}\ge h\}$. For all non-empty $A_1, A_2\subset\Z$, let $\{A_1\connect{\varphi_N\ge h}A_2\}$ be the event that there exists a nearest-neighbor path in $E_N^{\ge h}$ joining $A_1$ and $A_2$. We simply write $\{z\connect{\varphi_N\ge h}A_2\}$ if $A_1$ is the singleton $\{z\}$.

Throughout, the following notation will be heavily used. For two positive sequences $f_N$ and $g_N$, we write $f_N \lesssim g_N$ if there exists some universal constant $c>0$ such that $f_N\le cg_N$ for all $N\in\mathbb N$. Define $f_N\gtrsim g_N$ if $f_N^{-1} \lesssim g_N^{-1}$, and write $f_N\asymp g_N$ if $f_N\lesssim g_N$ and $f_N\gtrsim g_N$. For any $a,b>0$, define $$a\nabla b:=(a\vee b)\wedge1.$$

\subsection*{Results}
We start with a preliminary result for the metric-graph GFF $\mphi_N$ that provides up-to-constants estimates for the one-arm probabilities in the bulk at any level $h\in\R$.

\begin{proposition}\label{thm:metric_graph_bulk}
    The following estimates hold:
    \begin{equation}\label{eq:mdecay-}
            \mathbb P[0\stackrel{\mphi_N\ge h}{\longleftrightarrow} \partial B_{N/2}]\asymp
            \frac{|h|}{\sqrt{\log N}}\nabla\frac{1}{\sqrt{\log N}}, \  h\le 0;
    \end{equation}
    \begin{equation}\label{eq:mdecay+}
            e^{-a_1h^2}(\log N)^{-\frac12}\lesssim\mathbb P[0\stackrel{\mphi_N\ge h}{\longleftrightarrow} \partial B_{N/2}]\lesssim e^{-a_2h^2}(\log N)^{-\frac12},\ h>0.
    \end{equation}
    Above, $a_1,a_2>0$ are some universal constants.
\end{proposition}
We refer to Proposition~\ref{prop:0tok} for a stronger result, which considers connection to $\partial B_{rN}$ for any $r\in (0,\frac12)$ and provides similar estimates given the value of $\mphi_N$ at the origin.

Note that in \eqref{eq:mdecay-} and \eqref{eq:mdecay+}, the decay in $N$ is always $(\log N)^{-\frac12}$ for all $h\in\R$.
However, as we now show, the decay rate for the one-arm probability to the boundary  satisfies a phase transition across level $0$. In fact, when $h<0$, the ``boundary'' one-arm probability can be exactly determined by using \cite{lupu2018random}. More precisely, by \cite[Proposition~2]{lupu2018random},
\begin{equation}\label{eq:exact_formula}
    \mathbb P[0\stackrel{\mphi_N\ge h}{\longleftrightarrow} \partial B_{N}]=\P[|\mphi_{N,0}|\le |h|] \quad \text{ for all } h<0.
\end{equation}
Note that $\mphi_{N,0}$ is a centered Gaussian random variable with variance given by the Green's function $\wt G_N(0,0)\asymp \log N$ (see Section~\ref{subsec:GFF}). Hence, from \eqref{eq:exact_formula}, one has
\begin{equation}
    \mathbb P[0\stackrel{\mphi_N\ge h}{\longleftrightarrow} \partial_i B_N] \asymp \mathbb P[0\stackrel{\mphi_N\ge h}{\longleftrightarrow} \partial B_{N}] \asymp |h|(\log N)^{-\frac12} \quad \text{ for all } h<0.
\end{equation}
In order to get a non-trivial arm probability when $h>0$, we will consider the connection to $\partial_i B_N$ instead of $\partial B_N$.
In comparison, when $h>0$, the boundary one-arm probability decays at least polynomially fast in $N$.
\begin{proposition}\label{thm:metric-boundary}
    There exists a constant $c$ such that for all $h\ge 0$ and all $N$, 
    \begin{equation}\label{eq:mdecay1}
    \mathbb P[0\stackrel{\mphi_N\ge h}{\longleftrightarrow} \partial_i B_N]\lesssim (\log N)^{-1}N^{-ch^2}.
    \end{equation}
\end{proposition}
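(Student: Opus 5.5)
We need to bound $\P[0\connect{\mphi_N\ge h}\partial_i B_N]$ for $h\ge0$ by $(\log N)^{-1}N^{-ch^2}$. The plan has two ingredients. First, the $h=0$ estimate $(\log N)^{-1}$ comes from Lupu's exact formula for the sign cluster. Second, a multi-scale argument converts the positive level $h$ into the polynomial factor $N^{-ch^2}$. We will prove the combined bound directly.

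\textbf{Step 1 (the case $h=0$).} The cluster of $0$ in $\{\mphi_N\ge 0\}$ is a sign cluster, and $\partial_i B_N$ is adjacent to the Dirichlet boundary $\partial_iB_{N+1}$, where the field is pinned to $0$. By Lupu's isomorphism, or the two-point formula of \cite{lupu2016loop}, the probability that $0$ lies in the same sign cluster as a vertex $x$ equals $\frac{2}{\pi}\arcsin\frac{\wt G_N(0,x)}{\sqrt{\wt G_N(0,0)\wt G_N(x,x)}}$. For $x\in\partial_iB_N$ we have $\wt G_N(x,x)\asymp 1$, $\wt G_N(0,x)\asymp N^{-1}$ and $\wt G_N(0,0)\asymp\log N$. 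A union bound over the $\asymp N$ boundary vertices would only give $(\log N)^{-1/2}$, which is too weak. Instead we condition on $\mphi_{N,0}=a$. Since the boundary is where the field vanishes, reaching $\partial_iB_N$ means reaching, inside a positive cluster, points whose harmonic measure seen from $0$ is concentrated near height zero. The cleaner route is the exploration martingale: explore the positive cluster $\mathcal C$ of $0$. Given $\mathcal C$, the field outside is a GFF in $\wt B_{N+1}\setminus\mathcal C$ with boundary values $0$ on $\partial\mathcal C$. Hence the observable $M=\mphi_{N,0}$ is compared against the capacity-type quantity $\mathrm{cap}(\mathcal C)$. Using \eqref{eq:exact_formula}-type arguments, namely $\P[\text{cluster reaches }\partial B_N\mid\mphi_{N,0}=a]=\P[\text{Brownian bridge-type excursion}]\asymp e^{-a^2/(2\wt G_N(0,0))}\cdot a\cdot(\text{stuff})$, we aim for
\[
\P\big[0\connect{\mphi_N\ge 0}\partial_iB_N\,\big|\,\mphi_{N,0}=a\big]\lesssim \frac{a\vee 1}{\log N}.
\]
Integrating this against the Gaussian density of variance $\asymp\log N$ gives $(\log N)^{-1}$.

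\textbf{Step 2 (multi-scale decay for $h>0$).} Take dyadic scales $N_k=2^k$ for $k\le \log_2 N$, with annuli $A_k=B_{N_{k+1}}\setminus B_{N_k}$. On the event in question, every annulus $A_k$ is crossed by a path in $\{\mphi_N\ge h\}$. Decompose $\mphi_N$ on a circle around $A_k$ as a harmonic average plus an independent local field (the Markov property). A crossing of $A_k$ at level $h$ either forces the local harmonic average on scale $N_k$ to be at least $h/2$, or forces a crossing at level $h/2$ of a field that is essentially a GFF with zero boundary on a scale-$N_k$ domain. The boundary-nearby annuli are the key. Within distance comparable to $N$ of the Dirichlet boundary, the harmonic averages have bounded variance. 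So for each of the $\asymp\log N$ scales of boxes touching $\partial_iB_N$ in a shrinking sequence toward the endpoint on the boundary, the probability of a level-$h$ crossing is at most $1-\delta(h)$ with $\delta(h)\ge 1-e^{-c'h^2}$-type control. The RSW-type crossing estimates of \cite{ding2022crossing} give that near a zero boundary a positive crossing at level $h$ costs a factor $e^{-ch^2}$ per scale. Multiplying over $\log N$ scales, using the Markov property to decorrelate, yields $N^{-ch^2}$. Near the boundary the arm must terminate at $\partial_iB_N$, so we take scales around the terminal point and use a union bound over $\asymp N/N_k$ boundary boxes, combined with the near-boundary one-arm estimate.

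\textbf{Step 3 (combining).} To keep the $(\log N)^{-1}$ factor, we stop the multiscale argument at the scale $N^{1/2}$. The event is contained in the intersection of $\{0\connect{\mphi_N\ge 0}\partial_iB_N\}$ with the level-$h$ crossings of the $\asymp\log N$ boundary scales. We condition on the sign cluster exploration from $0$ up to the time it reaches distance $N/2$ from the boundary. Then, through the Markov property, we bound the conditional probability of the remaining boundary-scale crossings by $N^{-ch^2}$.

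\textbf{Main obstacle.} The hardest part is the decoupling in Step 3. The event $\{0\connect{}\partial_i B_N\}$ at level $0$ is positively correlated with large field values, so the factors of Steps 1 and 2 do not multiply trivially. I would handle this with an FKG-free argument: condition on the harmonic extension at each scale, and show that a conditional level-$h$ crossing near a Dirichlet boundary has probability at most $e^{-ch^2}$ uniformly in boundary data below $h/2$. I would treat the complementary event (boundary data large near the boundary) separately via Gaussian tails with bounded variance.
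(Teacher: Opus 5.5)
Your proposal does not yet amount to a proof, and the gaps are structural. In Step 1, the conditional target $\frac{a\vee 1}{\log N}$, integrated against a centered Gaussian of variance $\asymp\log N$, gives $\mathbb E[a^+]/\log N\asymp(\log N)^{-1/2}$, not $(\log N)^{-1}$. So even if you proved it, it would be no better than the union bound you discarded. The bound one actually gets and needs is of order $a(\log N)^{-3/2}$. You also give no argument for the target. The observable $\mphi_{N,0}$ is $\mathcal F_{\wt{\mathcal I}_0}$-measurable, so its exploration martingale is constant. And \eqref{eq:exact_formula} concerns $h<0$ and the outer boundary $\partial B_N$, so it does not apply here.

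Steps 2--3 have a deeper problem. A per-scale cost $e^{-c'h^2}$ (or $1-\delta(h)$) cannot compensate the entropy of the union bound over $\asymp N/N_k$ boundary boxes, which costs a factor $2$ per dyadic scale. For small $h$ you would need the per-scale factor to be essentially $\frac12 e^{-ch^2}$, i.e.\ the exact level-$0$ boundary arm exponent including its logarithmic correction. Nothing in the RSW-type input from \cite{ding2022crossing} gives this. Moreover, conditioning on the exploration until it reaches $\partial B_{N/2}$ accounts only for a factor $(\log N)^{-1/2}$ (Proposition~\ref{thm:metric_graph_bulk}). The other $(\log N)^{-1/2}$, like $N^{-ch^2}$, is produced in the boundary scales. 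Step 3 would therefore have to extract both factors from the same stretch of the arm, which is exactly the decoupling you leave open.

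The missing idea is that a single exploration martingale produces both factors at once. Explore $\wt E_N^{\ge h}$ from $0$ and take the non-normalized observable $X_V$ with $V=\partial_iB_N$, so $M_t=\mathbb E[X_V\mid\mathcal F_{\wt{\mathcal I}_t}]$. The increments of $\langle M\rangle_t$ are comparable to those of $H_N(V,\wt{\mathcal I}_t)$, which is essentially the capacity of $\wt{\mathcal I}_t$ relative to the box. This quantity is $\lesssim(\log N)^{-1}$ at $t=0$ by Lemma~\ref{lem:hitting}. It is $\gtrsim\log N$ once the cluster touches $\partial_iB_N$: a boundary vertex at distance $d$ from the touching point hits the cluster with probability $\gtrsim d^{-1}$, and $\sum_{d\le N}d^{-1}\asymp\log N$.

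Since the explored frontier sits at level $\ge h$, you have $M_t\ge h\,H_N(V,\wt{\mathcal I}_t)$. Hence under $\mphi_{N,0}=h+x\sqrt{\log N}$ and $h\ge 0$, one gets $M_t-M_0\ge c\,h\langle M\rangle_t-c'x/\sqrt{\log N}$ up to the hitting time of $\partial_iB_N$. Now apply Lemma~\ref{lem:dubins} and Lemma~\ref{lem:compute} with $m\asymp h$, $b\asymp x/\sqrt{\log N}$, $T\asymp\log N$. By \eqref{eq:asymp+} this yields $\frac{b}{\sqrt T}e^{-cm^2T}\asymp\frac{x}{\log N}N^{-ch^2}$. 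The $(\log N)^{-1}$ comes from $b/\sqrt T$ and the $N^{-ch^2}$ from the drift over the same horizon $T\asymp\log N$, so no multi-scale decoupling is needed. Integrating $x$ against the law of $(\mphi_{N,0}-h)/\sqrt{\log N}$ then gives the proposition.
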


While this paper was being written, we were informed that Pierre-Fran\c{c}ois Rodriguez and Wen Zhang were considering similar problems in the massive metric-graph setting, 
where the boundary conditions are replaced by a killing with rate of order $N^{-2}$ on the metric-graph Brownian motion.
In their recent preprint \cite{rodriguez2025anomalousscalinglawtwodimensional}, they implemented a renormalization argument to prove an estimate on the one-arm probability for subcritical $h$ analogous to \eqref{eq:mdecay+}, and obtained the precise value of the pre-factor before $h^2$ (analogous to $a_1, a_2$ in \eqref{eq:mdecay+}) which takes the form of some version of the Brownian capacity of a line segment in the plane. 
\medskip

We now turn to the main result of this work, namely the ``bulk'' one-arm probability for the discrete GFF $\varphi_N$. We show that its asymptotics in $N$ remains of order $(\log N)^{-\frac12}$ as in the metric graph case. 
However, the discrete estimates requires much more efforts to establish due to issues from the discrete nature of the problem, such as that the exploration process associated with the level set will not stop exactly at the level $h$. This will be more clear in Section~\ref{sec:discrete}.

\begin{theorem}\label{thm:discrete}
    We have the following estimates:
    \begin{equation} \label{eq:dsub-}
        \mathbb P[0\stackrel{\varphi_N\ge h}{\longleftrightarrow} \partial B_{N/2}]\asymp
            \frac{|h|}{\sqrt{\log N}}\nabla\frac1{\sqrt{\log N}}, \ h\le 0;
    \end{equation}
    \begin{equation}\label{eq:dsub+}
            e^{-a_3h^2}(\log N)^{-\frac12}\lesssim\mathbb P[0\stackrel{\varphi_N\ge h}{\longleftrightarrow} \partial B_{N/2}]\lesssim e^{-a_4h^2}(\log N)^{-\frac12},\ h>0,
    \end{equation}
    where $a_3,a_4>0$ are some universal constants.
\end{theorem}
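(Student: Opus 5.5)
Our plan is to compare the discrete event with the metric-graph one and use Proposition~\ref{thm:metric_graph_bulk}, supplemented by a direct exploration argument for the lower bounds, where monotonicity goes the wrong way.

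\emph{Upper bounds.} If there is a nearest-neighbour path in $E_N^{\ge h}$ from $0$ to $\partial B_{N/2}$, then on the metric graph the same path of edges is a candidate crossing; it may still be cut on an edge where $\mphi_N$ dips below $h$. So the metric event is contained in the discrete event, and the inclusion gives only a lower bound on the discrete probability. For the upper bound we will use the standard conditional description of $\mphi_N$ on an edge given its endpoint values: it is a Brownian bridge of length $2$ and variance $1$ per unit length. Given the endpoint values, the edge stays above $h-\delta$ with conditional probability at least a positive constant. We will therefore run the discrete exploration of the cluster of $0$ in $\{\varphi_N\ge h\}$. Conditionally on the discrete field, the bridges are independent, and they keep a path above level $h-1$ with positive probability. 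The difficulty is that a long discrete path would need all of its bridges to succeed.

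To avoid this, we will use the martingale / exploration approach of the metric-graph proof directly in the discrete setting. Explore the cluster $\mathcal C$ of $0$ in $E^{\ge h}_N$ and consider the harmonic average
\[
M=\sum_{x}\mathbb P_0[\text{RW first exits } B_{N+1}\setminus \mathcal C \text{ at } x]\,\varphi_{N,x}.
\]
This is a martingale in the exploration with mean $0$. Its terminal value is at least $h\cdot\mathrm{hm}(\partial\text{-cluster})$, plus an overshoot term: boundary vertices of $\mathcal C$ have values $<h$ but are unbounded below, which is the ``not stopping exactly at $h$'' issue. If the cluster reaches $\partial B_{N/2}$, the capacity-type quantity $\mathrm{hm}$ seen from $0$ is $\gtrsim (\log N)^{-1}$ relative to the variance. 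The optional-stopping/second-moment estimate $\mathbb P[\text{arm}]\lesssim \mathbb E[M_\infty^2]^{1/2}/\cdots$ used in the metric case should then go through if we control the overshoot: conditionally on the exploration, each exterior boundary value is a Gaussian with bounded conditional variance (discrete Markov property, where the conditional variance at a vertex adjacent to the explored set is $O(1)$). Truncated below $h$, its expected undershoot is $O(1)$. This gives the $(\log N)^{-1/2}$ decay, $|h|$ dependence for $h\le0$, and, via a Gaussian tail bound on $\varphi_{N,0}\ge h$ combined with Proposition~\ref{prop:0tok}-type conditioning on the origin value, the $e^{-a_4h^2}$ factor for $h>0$.

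\emph{Lower bounds.} The inclusion of the metric event gives
\[
\mathbb P[0\connect{\varphi_N\ge h}\partial B_{N/2}]\ge \mathbb P[0\connect{\mphi_N\ge h}\partial B_{N/2}],
\]
and Proposition~\ref{thm:metric_graph_bulk} then yields both lower bounds in \eqref{eq:dsub-} and \eqref{eq:dsub+} immediately.

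\emph{Main obstacle.} The main obstacle is the overshoot control in the upper bound. We must show that $\mathbb E[(h-\varphi_{N,x})^+\mid\mathcal F]$ on the exterior boundary is uniformly bounded, and that summing it against harmonic measure does not destroy the $(\log N)^{-1/2}$ order. For $h\le0$ with $|h|$ small we also need the bound $\frac{1}{\sqrt{\log N}}$ rather than $0$, which follows since the overshoot contributes an $O(1)$ effective level. We would first try to bound the conditional variance of an unexplored neighbour of $\mathcal C$ by the Green's function at a neighbour of a set containing a path to distance $N/2$, which is $O(1)$.
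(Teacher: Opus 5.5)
Your lower bounds match the paper's argument: a metric-graph crossing passes through a nearest-neighbour path of vertices in $E_N^{\ge h}$, so Proposition~\ref{thm:metric_graph_bulk} gives both lower bounds.

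The upper bound has a genuine gap. In the metric case the upper bound is not an optional-stopping or second-moment estimate. It rests on the fact that the arm event forces the exploration martingale to stay above the barrier $h\,\overline H_N(U,\widetilde{\mathcal I}_t)$ for \emph{every} $t<\sigma$, because every explored point has value $\ge h$. Dubins--Schwarz and Lemma~\ref{lem:compute} then turn this into a Brownian motion staying above a line for a time of order one. That yields the $(\log N)^{-1/2}$ decay and, through the positive drift, the factor $e^{-ch^2}$. The Gaussian tail of $\varphi_{N,0}$, whose variance is $\asymp\log N$, would only give $e^{-ch^2/\log N}$.

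In the discrete exploration, the explored set contains vertices with values below $h$ at every intermediate time. So on the arm event the martingale can fall below the barrier at any time, not only at termination. What is needed is a pathwise, high-probability barrier, namely Proposition~\ref{prop:cH_bound}: outside an event of probability $O(N^{-1})$ (resp.\ $O(e^{-ch^2})$ if $h>\sqrt{\log N}$), one has $\overline M_t\ge(\hat h-3c_1)\overline H_N(U,\mathcal I_t)$ for all $t\in[\sigma_1,\sigma)$. A bound on the expected undershoot of each boundary vertex only gives, after summing against harmonic measure, a first-moment bound on $h\,\overline H_N(U,\mathcal I_t)-\overline M_t$. Markov's inequality then gives a constant-probability statement at each fixed time. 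That is useless at precision $(\log N)^{-1/2}$ and cannot be union-bounded over the $\asymp N^2$ exploration steps.

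Your plan also does not deliver the per-vertex bound itself. Bounding the conditional variance at a neighbour of the cluster by an $O(1)$ Green's function ignores that the vertices of $I^-$ are jointly conditioned on the orthant $\{\varphi_{N,x}<h,\ x\in I^-\}$. That joint conditioning can drag their conditional means down collectively.

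The paper's route has three steps you are missing:
\begin{itemize}
\item \textbf{Mean bound.} It gets $\mathbb E[\varphi_{N,x}\mid\beta_I,\mathcal F_{I^+}]\ge h-c_1$ (and $\ge h/2$ when $h>\sqrt{\log N}$). This uses the monotone comparison \cite[Lemma 4.2]{ding2022crossing} to reduce to the worst case where every other site lies below $h$, followed by a logarithmic barrier function (Lemma~\ref{lem:pin}).
\item \textbf{Concentration.} Lemma~\ref{lem:moment} (Brascamp--Lieb) gives Gaussian-type moments for the harmonic average, with variance $\lesssim\xi(U,I)\,\overline H_N(U,I)^2$. The key input is $\xi(U,I)\lesssim N^{-a}$, which holds by Beurling once the explored set has diameter $\ge\sqrt N$. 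This is why the barrier is only asserted after $\sigma_1$; the interval $[0,\sigma_1)$ is harmless because $\langle\overline M\rangle_{\sigma_1}\lesssim(\log N)^{-1}$.
\item \textbf{Union bound.} Per-step exponential bounds of order $N^{-3}$, a Brownian estimate between integer times, and a union bound over the $O(N^2)$ steps give the barrier.
\end{itemize}
Only after that does the metric-case reduction go through, via the events $\mathcal A_k$, or $\hat{\mathcal A}_{N/2}$ with halved level when $h>\sqrt{\log N}$.

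Finally, your observable is mis-specified. Harmonic measure seen from $0\in\mathcal C$ is degenerate. It must be seen from a macroscopic set such as $U=\partial B_{\lfloor 3N/4\rfloor}$, with $\overline H_N(U,\cdot)$ growing from $\asymp(\log N)^{-1}$ to $\asymp 1$ along the exploration.
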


Moreover, we prove that the above one-arm probability differs from its counterpart in the metric graph by a constant multiple of $(\log N)^{-\frac12}$. This result bears some resemblance to \cite[Theorem 1.2]{ding2022crossing}, where it is proved that the left-right crossing probabilities of the sign cluster of the discrete  and metric-graph GFF's differ by a positive constant.

\begin{theorem}\label{thm:comparison}   
    For any $h\in\mathbb{R}$, there exists some $c=c(h)>0$ such that
\begin{equation}\label{eq:arm-diff}
\mathbb P[0\stackrel{\varphi_N\geq h}{\longleftrightarrow}\partial B_{N/2}]-\mathbb P[0\stackrel{\mphi_N\geq h}{\longleftrightarrow}\partial B_{N/2}]\ge c(\log N)^{-\frac12}.
\end{equation}
\end{theorem}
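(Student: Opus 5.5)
Since $\varphi_N$ is the restriction of $\mphi_N$ to $B_{N+1}$, every metric-graph path on which $\mphi_N\ge h$ passes through vertices with $\varphi_N\ge h$. Hence $\{0\connect{\mphi_N\ge h}\partial B_{N/2}\}\subset\{0\connect{\varphi_N\ge h}\partial B_{N/2}\}$, and the difference in \eqref{eq:arm-diff} equals
\[
\P\bigl[0\connect{\varphi_N\ge h}\partial B_{N/2},\ 0\disconnect{\mphi_N\ge h}\partial B_{N/2}\bigr].
\]
We must bound this probability below by $c(h)(\log N)^{-1/2}$. The natural witness is a \emph{local pivotal} configuration: a discrete arm from $0$ to $\partial B_{N/2}$ which, on the metric graph, is cut at a single edge near the origin. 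The interior of an edge can dip below $h$ even when both of its endpoints are at least $h$. Conditionally on the vertex values, the field on an edge of length $2$ is an independent Brownian bridge of variance parameter $1$. If both endpoints lie in $[h,h+1]$, the bridge falls below $h$ with probability at least some $p_0>0$ uniform in $N$. Since the bridges are independent of $\varphi_N$, we can first construct an event measurable with respect to $\varphi_N$ and then multiply by $p_0$.

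Concretely, I would take a fixed small box $B_K$ around the origin, with $K$ fixed (say $K=3$). Let $e=\{0,e_1\}$, and consider the event $\mathcal E$ with the following properties: $\varphi_{N,0}\in[h,h+1]$ and $\varphi_{N,e_1}\in[h,h+1]$; all other neighbours of $0$ satisfy $\varphi_N<h$, so that $0$ is a discrete dead end except through $e_1$; and $e_1\connect{\varphi_N\ge h}\partial B_{N/2}$ through a path avoiding $0$. On $\mathcal E$, if the bridge on $e$ dips below $h$, the metric-graph cluster of $0$ is contained in the closed neighbourhood of $0$ made of the half-edges to $e_1$ and to the other neighbours, so $0\disconnect{\mphi_N\ge h}\partial B_{N/2}$, while the discrete arm still exists. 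The difference is therefore at least $p_0\,\P[\mathcal E]$. It remains to show $\P[\mathcal E]\ge c(h)(\log N)^{-1/2}$.

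For the lower bound on $\P[\mathcal E]$, I would use a local modification argument. Condition on $\varphi_N$ outside the finite set $\{0\}\cup\{\text{neighbours of }0\}$, or alternatively outside $B_K$. The conditional law of the field on this finite set is Gaussian with mean given by the harmonic extension and a covariance that is bounded above and below uniformly in $N$. Given boundary data on which $e_1$ is connected to $\partial B_{N/2}$ by an $h$-path avoiding $B_1\setminus\{e_1\}$, the constraints on the finitely many site values in $\mathcal E$ have conditional probability bounded below by some $c(h,M)>0$, provided the harmonic mean is bounded by $M$. So I need a lower bound of order $(\log N)^{-1/2}$ on the probability that, outside a small box, there is an arm from a neighbour of $B_1$ to $\partial B_{N/2}$ and the field near the origin has controlled values. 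I would get this from the lower bounds in Theorem~\ref{thm:discrete}, or more directly from the metric-graph lower bound in Proposition~\ref{thm:metric_graph_bulk}/Proposition~\ref{prop:0tok}. The latter gives control of the value at the origin: the arm event from $0$ at level $h-1$ (or $h$), intersected with $\{\mphi_{N,0}\in[h,h+C]\}$, still has probability at least $c(h)(\log N)^{-1/2}$. A metric-graph arm from $e_1$ is in particular a discrete arm.

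The main obstacle is this surgery step. The arm must come from $e_1$ without passing through the other neighbours of $0$, whose values we then want to push below $h$, and the harmonic means must stay bounded on a set of probability comparable to the whole arm event. I would handle this with an FKG/finite-energy argument. Start from the arm event $\{e_1\connect{\mphi_N\ge h}\partial B_{N/2}\}$ together with the field at $e_1$ being $O(1)$, which has probability at least $c(h)(\log N)^{-1/2}$ by Proposition~\ref{prop:0tok} shifted to $e_1$. Then apply a resampling argument on the finitely many vertices $0,-e_1,\pm e_2$. Lowering those values can only destroy arms through them, so I would choose the arm to pass through $e_1$ via the direction $+e_1$, by forcing $e_1+e_2$, $2e_1$ and similar vertices high within a bounded local box. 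Conditioning on values in a bounded region changes probabilities by at most factors $c(h)>0$, using the conditional Gaussian density ratio and a tail bound on the harmonic mean, which is Gaussian with variance $O(1)$ given the neighbouring values. Combining these steps gives $\P[\mathcal E]\gtrsim_h(\log N)^{-1/2}$, and hence \eqref{eq:arm-diff}.
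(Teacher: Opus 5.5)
Your reduction is sound. The metric-graph arm implies the discrete one, the edge bridges are conditionally independent given $\varphi_N$, and on $\mathcal E$ a dip of the bridge on $\{0,e_1\}$ kills the metric-graph arm but not the discrete one, so the difference is at least $p_0\,\P[\mathcal E]$.

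The gap is the claim $\P[\mathcal E]\gtrsim_h(\log N)^{-1/2}$, which is too large by a factor $\log N$. The one-arm probability is carried by configurations with $\mphi_{N,0}-h\asymp\sqrt{\log N}$, not $O(1)$. Indeed, Proposition~\ref{prop:0tok}, which is the input you invoke, gives $g^h_{N,1/2}(x)\asymp_h x(\log N)^{-1/2}$ for small $x$. With $\xi_N^h=(\mphi_{N,0}-h)/\sqrt{\log N}$, whose density near $0$ is of order one, this yields
\[
\P\bigl[0\connect{\mphi_N\ge h}\partial B_{N/2},\ \mphi_{N,0}\in[h,h+C]\bigr]=\mathbb E\bigl[g^h_{N,1/2}(\xi_N^h)\mathbbm 1_{\{0<\xi_N^h\le C/\sqrt{\log N}\}}\bigr]\asymp_{h,C}(\log N)^{-3/2}.
\]
Equivalently, $\P\bigl[\mphi_{N,0}-h\le C\,\big|\,0\connect{\mphi_N\ge h}\partial B_{N/2}\bigr]\asymp_{h,C}(\log N)^{-1}$. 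So the field near the origin is \emph{not} bounded on a set of probability comparable to the arm event, and your resampling step ``at cost $c(h)$'' cannot be carried out.

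You also cannot relax the constraint $\varphi_{N,0},\varphi_{N,e_1}\in[h,h+1]$. The bridge on $\{0,e_1\}$ dips below $h$ with probability $\exp(-c(\varphi_{N,0}-h)(\varphi_{N,e_1}-h))$. Since $\varphi_{N,e_1}-\varphi_{N,0}=O(1)$, this is $N^{-c}$ on typical arm configurations. Heuristically the same holds for the discrete event: the exploration martingale starts $O(1/\log N)$ above an approximate barrier and must survive until its quadratic variation is of order one. So any pivotal at bounded distance from $0$ yields at most order $(\log N)^{-3/2}$.

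The missing idea is to put the pivotal where the field is naturally close to $h$ while the cluster is already macroscopic, namely on the outer boundary of the metric-graph cluster $\mathcal C_0$.

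\begin{enumerate}
\item The paper works on the event $\Lambda$ that the filling $\mathcal C_0^*$ lies between $\wt B_{rN}$ and $\wt B_{N/4}$, and that no boundary-connected cluster of $\{\mphi_N<h\}$ enters $\wt B_{3N/4}$. The bound $\P[\Lambda]\gtrsim(\log N)^{-1/2}$ follows from Proposition~\ref{prop:0tok} with $r$ small, the circuit estimates, and FKG.
\item Conditionally on $\mathcal C_0$ and $\Lambda$, the discrete level set reaches $\partial B_{N/2}$ from the layer $\Gamma$ around $\mathcal C_0^*$ with probability bounded below. This uses an exploration-martingale and entropic-repulsion argument (Proposition~\ref{prop:gamma_to_N/2}).
\item A vertex $v$ at distance $\delta<1$ from $\mathcal C_0^*$ can then be opened at constant cost, because its conditional mean is at least $h-c_1\delta^{1/2}$ by Lemma~\ref{lem:pin}.
\end{enumerate}

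This is how the full factor $(\log N)^{-1/2}$ is retained.
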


As an intermediate step in the proof of \eqref{eq:arm-diff}, we also establish the following RSW-type bounds, which is of independent interest: with positive probability uniformly in $N$, there exists a circuit (i.e., loop) above any fixed level in any annulus $A_{\alpha N,\beta N}:=B_{\beta N}\setminus B_{\alpha N}$ for $0<\alpha<\beta<1$. 

\begin{proposition}\label{prop:circuit}
    For all $h\in\R$ and  $0<\alpha<\beta<1$, there exists $c=c(h,\alpha,\beta)>0$ such that for all large $N$,
    \begin{equation}\label{eq:dC1}
            \mathbb P[ \text{there exists a circuit in $A_{\alpha N,\beta N}$ that surrounds $B_{\alpha N}$, on which $\varphi_N\ge h$} ]>c,
    \end{equation} 
and hence \begin{equation}\label{eq:annulus-crossing}
        \mathbb P[ B_{\alpha N}\stackrel{\varphi_N\le h}{\longleftrightarrow} \partial B_{\beta N}]<1-c.
    \end{equation}
\end{proposition}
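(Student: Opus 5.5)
We want a circuit in $A_{\alpha N,\beta N}$ surrounding $B_{\alpha N}$ on which $\varphi_N\ge h$, with probability bounded below uniformly in $N$. The second claim \eqref{eq:annulus-crossing} follows by planar duality: such a circuit, viewed with the appropriate (nearest-neighbour versus $*$-) adjacency, blocks every nearest-neighbour path in $\{\varphi_N\le h\}$ from $B_{\alpha N}$ to $\partial B_{\beta N}$ except on the null event that $\varphi_N=h$ somewhere. So the work is in \eqref{eq:dC1}.

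\textbf{Reduction to $h=0$ by shifting.} Fix smooth deterministic functions and use the Cameron--Martin theorem. Let $f_N(x)=M\,\mathbb P_x[\text{SRW hits } \partial_i B_{N+1}\text{ before } \ldots]$, or more simply a discrete harmonic-type profile that is $\ge |h|$ on $A_{\alpha N,\beta N}$ and has Dirichlet energy bounded uniformly in $N$. A convenient choice is a discretization of a fixed smooth compactly supported function $g$ on $(-1,1)^2$ with $g\equiv |h|+1$ on the annulus $[-\beta,\beta]^2\setminus(-\alpha,\alpha)^2$, setting $f_N(x)=g(x/N)$. Its discrete Dirichlet energy converges to $\frac12\int|\nabla g|^2<\infty$, which is the standard 2D scale invariance. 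The laws of $\varphi_N$ and $\varphi_N+f_N$ then have relative entropy bounded uniformly in $N$. Hence any event of probability $\ge c_0>0$ for $\varphi_N+f_N$ has probability $\ge c_1(c_0,g)>0$ for $\varphi_N$, via the Cauchy--Schwarz/entropy inequality $\mathbb P[A]\ge \mathbb Q[A]^2/\mathbb E_{\mathbb P}[(d\mathbb Q/d\mathbb P)^2]$, where the second moment is $e^{\|f_N\|_{\mathcal E}^2}$. Since $\{\varphi_N+f_N\ge h\}\supseteq\{\varphi_N\ge -1\}$ on the annulus, it suffices to show that with probability bounded below there is a circuit in the annulus with $\varphi_N\ge -1$.

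\textbf{The case of level $-1$.} I would bound this below by passing to the metric graph. A circuit of $\mphi_N\ge -1$ in the metric-graph annulus restricts to a discrete circuit with $\varphi_N\ge -1$, because the vertices visited by a continuous path on which $\mphi_N\ge -1$ satisfy the same inequality. So I would prove the metric-graph statement. By symmetry $\mphi\mapsto-\mphi$, it suffices that $\mathbb P[B_{\alpha N}\connect{\mphi_N\le 1}\partial B_{\beta N}]\le 1-c$, or directly to find a positive circuit. For this I would use the sign-cluster left--right crossing results of \cite{ding2022crossing}, which hold in rectangles at level $0$ with Dirichlet boundary, together with FKG (increasing events for the GFF). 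Concretely, cover the annulus by four rectangles of bounded aspect ratio arranged so that crossings in the long direction glue into a circuit. Each rectangle is crossed lengthwise by $\{\mphi_N\ge 0\}$ with probability $\ge c$. This is where the main obstacle lies: the cited bounds are for the field Dirichlet on that rectangle, whereas our rectangles sit in the bulk of $B_{N+1}$. I would handle this by a Markov decomposition $\mphi_N=\mphi^{R}+H$, where $\mphi^R$ is the Dirichlet field on a slightly larger rectangle $R'$ and $H$ is the harmonic extension. On the inner rectangle $H$ has variance of order $1$ uniformly in $N$ (the bulk-versus-Dirichlet Green's function difference is bounded at macroscopic distance from $\partial R'$), and it is smooth. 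So with probability bounded below, $H\ge -1$ on the inner rectangle, and on that event a $\{\mphi^R\ge0\}$ crossing gives a $\{\mphi_N\ge -1\}$ crossing. Because $H$ and $\mphi^R$ are independent, the two estimates combine, using an RSW step inside $R'$ to get crossings of the inner rectangle from crossings of Dirichlet rectangles. Finally, FKG glues the four crossings into a circuit at level $-1$, since all the events involved are increasing. The Cameron--Martin step then transfers this to level $h$.
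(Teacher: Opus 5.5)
You have a genuine gap, at the one step that carries the content of the proposition.

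\textbf{What works.} Your Cameron--Martin reduction is correct. With $f_N=g(\cdot/N)$ of bounded Dirichlet energy, the bound $\P[A]\ge \mathbb Q[A]^2\,\mathbb E_{\P}[(d\mathbb Q/d\P)^2]^{-1}$ transfers a circuit at level $-1$ to a circuit at level $h$. The duality step for \eqref{eq:annulus-crossing} and the FKG gluing of four crossings are also fine.

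\textbf{The gap.} What you need is a lengthwise crossing, with probability bounded below, of a rectangle sitting at macroscopic distance from $\partial R'$.
\begin{enumerate}
\item Any crossing bound you import for a field that is Dirichlet on $R'$ concerns paths joining two sides of $R'$. Nothing keeps such paths away from $\partial R'$, and that is exactly where your decomposition $\mphi_N=\mphi^R+H$ loses control: near $\partial R'$ the harmonic part $H$ has variance of order $\log N$.
\item For the metric-graph field at level $0$ with zero boundary data, such boundary-to-boundary crossings even degenerate. Its sign clusters do not reach a zero boundary in the scaling limit.
\item You then invoke ``an RSW step inside $R'$'' to push the crossings into the bulk. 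No RSW theorem for GFF level sets is available to cite. The standard RSW arguments rely on translation invariance and some form of spatial decorrelation, and the log-correlated Dirichlet field on $R'$ provides neither.
\end{enumerate}
Proposition~\ref{prop:circuit} is itself the RSW-type estimate. The paper presents it as a bound of independent interest that confirms a speculation of \cite{ding2018chemical}. So your argument assumes the result at its crux.

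\textbf{How the paper avoids this.} It uses no crossing estimates at all, only the isomorphism of Lemma~\ref{lem:description}:
\begin{enumerate}
\item With probability bounded below, $\wt\Lc^{1/2}_N$ contains a single loop in the annulus surrounding the inner box \cite[Lemma~4.6]{bi2025arm}.
\item Independently, with probability bounded below, no cluster containing an excursion of $\Xi^h_N$ enters $\wt B_{(1-\eps)N}$. This follows from \cite[Lemma~4.13]{aru2020first} together with the bounded mass of excursions coming close to the bulk.
\item The loop's cluster then has the required sign with probability $\frac12$.
\end{enumerate}

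\textbf{How to repair your proof.} Your shift combines well with this argument.
\begin{enumerate}
\item At level $0$ there is no excursion cloud. Lupu's isomorphism (sign clusters of $\mphi_N$ are loop-soup clusters with i.i.d.\ fair signs) turns the surrounding loop into a continuous circuit with $\mphi_N>0$, with probability bounded below.
\item That continuous circuit contains a discrete circuit with $\varphi_N\ge -1$.
\item Your Cameron--Martin step then replaces the paper's excursion-cloud control for general $h$.
\end{enumerate}
Substituting this for your rectangle-crossing paragraph gives a complete proof.
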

Note that \eqref{eq:annulus-crossing} confirms the speculation in \cite[Remark~1]{ding2018chemical}.

Incidentally, our results indicate the following bound on the chemical distance on the discrete level set across a macroscopic annulus given connectivity, parallel to the result on the metric graph \cite[Theorem 1]{DW20}). 

For $h\in\mathbb{R}$, let $\Dc_{N,h}$ denote the graph distance on the level set $E_N^{\ge h}$. For any $h\in\mathbb R$, $0<\alpha<\beta<\gamma<1$, and $\eps>0$, there exists a constant $c$ such that 
\begin{equation}\label{eq:chemical}
         \limsup_{N\to\infty} \P[\Dc_{N,h}(B_{\alpha N}, \partial B_{\beta N})>c N (\log N)^{1/4}\mid B_{\alpha N}\stackrel{\varphi_N\ge h}{\longleftrightarrow} \partial B_{\gamma N}]\le \eps.
\end{equation}     
Note that such a result has already been speculated in \cite[Section 1.3]{DW20}.

\subsection*{Proof strategy}

We now briefly outline the main ideas of the proofs. The central tool throughout the paper is the exploration martingale, introduced in \cite{DW20}. In both the metric-graph and discrete settings, we construct a continuous-time exploration process of the level-set cluster starting from the origin. Along this exploration, we track the evolution of the GFF on a finite set of vertices and consider the associated martingale given by the conditional expectation of a suitable observable, conditioned on the current explored boundary.

This approach allows us to reformulate the one-arm event in terms of a constraint on the trajectory of the exploration martingale: namely, the event that the martingale remains above a certain threshold over some time interval. This probabilistic reformulation is more amenable to analysis thanks to the Dubins-Schwarz Theorem (see Lemma~\ref{lem:dubins}), and enables us to derive sharp estimates.

The metric-graph case is comparatively simpler. In this setting, the exploration process enjoys an exact stopping property: the exploration terminates precisely when all boundary points of the cluster simultaneously reach the level $h$ (see \eqref{eq:terminal_condition}). This structural feature allows for a relatively direct analysis of the associated martingale.

In contrast, the discrete setting lacks such an exact stopping mechanism. To overcome this difficulty, we adapt an entropic repulsion argument from \cite{ding2022crossing} (see Proposition \ref{prop:cH_bound}). Roughly speaking, we show that with high probability, the harmonic average of the GFF along the boundary of the explored cluster does not fall significantly below the level $h$. This substitute control compensates for the absence of the exact stopping property and allows us to derive an upper bound on the discrete one-arm probability via a similar martingale argument. The corresponding lower bound follows more directly, as the discrete one-arm event is implied by its metric-graph counterpart.

\subsection*{Organization}
The rest of the paper is organized as follows. In Section 2, we introduce the notation used in this paper, together with some key tools (in particular the exploration martingale introduced in \cite{DW20}, from which we have drawn a lot of inspiration, and the connection between (metric-graph) GFF level sets, the loop soup and the excursion cloud, established in \cite{lupu2016loop} and \cite{aru2020first}). We will also establish some preliminary results in Section 2. In particular, we prove a stronger version of Proposition \ref{prop:circuit} (c.f.\ Proposition \ref{prop:circuit'}). In Section 3, we prove the results on the metric-graph GFF, namely Propositions \ref{thm:metric_graph_bulk} and \ref{thm:metric-boundary}. Finally, in Section 4, we focus on the discrete case, establishing Theorems \ref{thm:discrete} and \ref{thm:comparison} and sketching a proof of \eqref{eq:chemical}.

\medskip

\section{Preliminaries}\label{sec:prelim}

For $v=(v_1,v_2)\in\mathbb R^2$, let $|v|_\infty:=|v_1|\vee|v_2|$ and $|v|:=\sqrt{v_1^2+v_2^2}$ be its $l^\infty$ and $l^2$ norms respectively.
For any nonempty $A_1,A_2\subset\mathbb R^2$, define the $l^\infty$-distance between them by ${\rm dist}(A_1,A_2):=\inf\{|x_1-x_2|_\infty:x_1\in A_1,x_2\in A_2\}$, and write ${\rm dist}(x,A_2)$ if $A_1=\{x\}$ is a singleton. For all $x,y\in\mathbb Z^2$ and $r>0$, we say that $x,\ y$ are nearest neighbors and write $x\sim y$ if $|x-y|=1$. We define the discrete $r$-neighborhood of $x$ by $B(x,r)\coloneq\{y\in\mathbb Z^2:|x-y|_\infty\leq r\}$, and write $B_r=B(0,r)$. For any $0<r<r'<\infty$, define $A_{r,r'}=B_{r'}\setminus B_r$. For $A\subset\mathbb Z^2$, define its inner boundary by $\partial_i A=\{x\in A:\exists\ y\in A^c,x\sim y\}$, and its outer boundary by $\partial A=\{x\in A^c:\exists\ y\in A,x\sim y\}$. 
For any non-empty $A\subset \Z$, let $\wt A$ denote its metric graph version which is obtained by joining each pair of neighboring vertices by a line segment of length $2$. The unlabeled constants $c,c',c''$, etc.\ may vary from line to line, whereas the labeled constants $c_i,c_i'$ etc.\ are fixed throughout each {\it section}. Unless stated otherwise, all constants are universal and do not rely on any additional parameters.

\subsection{Basic results on Brownian motions and martingales}
We start with an useful estimate  for standard one-dimensional Brownian motions.
Let 
\begin{equation}\label{eq:surv}
\overline\Phi(s):=\int_s^\infty\frac1{\sqrt{2\pi}}e^{-\frac12u^2}du    
\end{equation}
 denote the tail probability of the standard normal distribution. 
  We will use the following estimate on the hitting probability of a line with any given slope; see e.g.\  \cite[equation (2.0.2) in Part II]{borodin2012handbook} or \cite[Proposition 15]{DW20}. 
 
\begin{lemma}\label{lem:compute}
Suppose $(B_t)_{t\ge 0}$ is a standard one-dimensional Brownian Motion.
    For $m\in\mathbb R$ and $b>0$, let $\tau=\inf\{t>0:B_t\le mt-b\}$. For any $T>0$,
    $$\mathbb P[\tau\le T]=1-\psi(m,b,T),$$
    where
    \begin{equation}\label{eq:psidef}
        \psi(m,b,T):=\overline\Phi\left(m\sqrt T-\frac{b}{\sqrt T}\right)-e^{2bm}\overline\Phi\left(m\sqrt T+\frac{b}{\sqrt T}\right).
    \end{equation}
\end{lemma}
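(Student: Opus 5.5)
The plan is to reduce to a first-passage problem for Brownian motion with zero drift and then handle the linear boundary by a Girsanov change of measure combined with the reflection principle. Write $B_t = mt - b + W_t$ with $W_t := B_t - mt + b$, so $W$ starts at $b>0$ and is a Brownian motion with drift $-m$. Then $\tau=\inf\{t>0: W_t\le 0\}$, and the complementary event $\{\tau>T\}$ is the event that a Brownian motion with drift $-m$ started at $b$ stays positive up to time $T$. It therefore suffices to show $\mathbb P[\tau>T]=\psi(m,b,T)$.

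First compute the zero-drift case via reflection. For a standard Brownian motion $X$ started at $b$ and killed at $0$, the density at time $T$ is $p_T(b,y)-p_T(b,-y)$ for $y>0$, where $p_T$ is the Gaussian heat kernel. Next apply Girsanov. The law of $W$ on $[0,T]$ has density $\exp(-m(X_T-b)-\tfrac12 m^2T)$ with respect to the law of $X$. This gives
\begin{equation*}
\mathbb P[\tau>T]=\int_0^\infty e^{-m(y-b)-\frac12 m^2 T}\bigl(p_T(b,y)-p_T(b,-y)\bigr)\,dy .
\end{equation*}

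Then complete the square in each term. For the first term, $-m(y-b)-\tfrac12m^2T-\tfrac{(y-b)^2}{2T}=-\tfrac{(y-b+mT)^2}{2T}$, so the integral equals $\mathbb P[\mathcal N(b-mT,T)>0]=\overline\Phi\bigl(m\sqrt T-\tfrac{b}{\sqrt T}\bigr)$. For the second term, $-m(y-b)-\tfrac12m^2T-\tfrac{(y+b)^2}{2T}=2bm-\tfrac{(y+b+mT)^2}{2T}$, so the integral equals $e^{2bm}\,\overline\Phi\bigl(m\sqrt T+\tfrac{b}{\sqrt T}\bigr)$. Subtracting the second from the first gives exactly $\psi(m,b,T)$ as in \eqref{eq:psidef}.

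The only delicate step is the sign bookkeeping in the Girsanov exponent, since the drift is $-m$, and in the completed square of the reflected term, which produces the factor $e^{2bm}$. A quick consistency check is that the expression tends to $1$ as $T\downarrow 0$ and matches $\mathbb P[\inf B > -b]$ as $T\to\infty$ when $m<0$. Alternatively, one can simply cite the formula from \cite{borodin2012handbook}.
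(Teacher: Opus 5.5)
Your proposal is correct. It differs from the paper only in that the paper gives no proof of this lemma at all: it simply quotes the formula from \cite{borodin2012handbook} (equation (2.0.2) in Part II) and from \cite{DW20}, Proposition 15. You instead give a self-contained derivation. You shift to $W_t=B_t-mt+b$, a Brownian motion with drift $-m$ started at $b>0$, so that $\{\tau>T\}=\{\min_{[0,T]}W>0\}$ by continuity of paths. You then write the killed transition density of driftless Brownian motion via reflection and reweight it with the Girsanov density $\exp(-m(X_T-b)-\tfrac12 m^2T)$.

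The two completed squares are right:
\[
-m(y-b)-\tfrac12m^2T-\tfrac{(y-b)^2}{2T}=-\tfrac{(y-b+mT)^2}{2T}
\]
and
\[
-m(y-b)-\tfrac12m^2T-\tfrac{(y+b)^2}{2T}=2bm-\tfrac{(y+b+mT)^2}{2T}.
\]
They give exactly $\overline\Phi(m\sqrt T-b/\sqrt T)$ and $e^{2bm}\,\overline\Phi(m\sqrt T+b/\sqrt T)$.

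The citation is shorter. Your route makes the lemma self-contained and shows transparently that the factor $e^{2bm}$ is the Girsanov weight picked up by the reflected (image) term.

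One small slip is in your sanity check. For $m<0$ the limit of $\psi(m,b,T)$ as $T\to\infty$ is $1-e^{2bm}$. This equals $\mathbb P[\inf_{t\ge0}(B_t-mt)>-b]$, the survival probability of a Brownian motion with positive drift $|m|$. It does not equal $\mathbb P[\inf_t B_t>-b]$, which is $0$. This does not affect the proof itself.
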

    For some constants $c,c'>0$, the following asymptotics hold:
    \begin{equation}\label{eq:asymp+}
        \frac {be^{-cm^2T}}{\sqrt T}\lesssim\psi(m,b,T)\lesssim\frac{be^{-c'm^2T}}{\sqrt T},\;\mbox{when }m>0, 0<b\le\sqrt T;
    \end{equation}
    \begin{equation}\label{eq:asymp-}
    \psi(m,b,T)\asymp |mb|\nabla\frac b{\sqrt T},\mbox{ when }m\le0.
    \end{equation}
\hspace{\fill}

Next, we turn to the Brownian motion on the metric graph.
The Brownian motion $X$ can be defined on $\mZ$ in a certain way such that it behaves like a standard Brownian motion in the interior of edges and ``chooses to make excursions on each incoming edge uniformly at random'' on vertices (see \cite[Section 2]{lupu2016loop} for a rigorous definition). For $x\in\mZ$, we denote the law of $X$ with $X_0=x$ by $P^x$ and the corresponding expectation by $E^x$. For all non-empty $A\subset\mZ$, the first hitting time of $A$ by $X$ is given by
$$\tau_A:=\inf\{t\ge 0:X_t\in A\}.$$ 

In \cite[Section 2]{lupu2016loop}, Lupu also introduced 
the Green's function for the Brownian motion on $\mZ$ killed upon exiting some subset $A$ of $\mZ$. We denote it by
$\wt G_A(x,y)$, for all $x,y\in A$, and abbreviate $\wt G_N\coloneq \wt G_{\wt B_{N+1}}$. For $V\subset\Z$ non-empty, 
let $G_V$ denote the Green's function for the simple random walk on $\Z$ killed upon exiting $V$. Then, it is argued in \cite{lupu2016loop} that $G_V$ is the restriction of
${\wt G}_{\widetilde{V\cup\partial V}}$ to $V$.

For $A\subset \wt B_{N+1}$, and $x\in\wt B_{N+1}, y\in A$, define the harmonic measure of $A$ from $x$ relative to $\mB_{N+1}$ at $y$ by
\begin{equation}\label{eq:intro_HN}
    H_N(x,y;A)=P^x[X_{\tau_{A\cup(\wt B_{N+1})^c}}=y],
\end{equation}
and write $H_N(U,y;A)=\sum_{x\in U}H_N(x,y;A)$ for all $U\subset B_N$. For any subset $D\subset A$, write 
$$H_N(U,D;A)=\sum_{y\in D}H(U,y;A),$$
where the equality is well defined because  the number of $y$'s such that $H(U,y;A)>0$ is obviously finite. For simplicity, we write $H_N(U,A)=H_N(U,A;A)$, and write $H_N(x,A)=H_N(\{x\},A)$ for the harmonic measure of $A$ relative to $\mB_{N+1}$ seen from $x$. Finally, we set the average harmonic measure from $U$ as follows:
\begin{equation}\label{eq:ave-harm}
    \overline H_N(U,\cdot)=\frac1{|U|}H_N(U,\cdot).
\end{equation}

We will frequently use the following version of Beurling estimate  stated for Brownian motion on the metric graph.
\begin{lemma}\label{lem:beurling}
    There exists $c>0$ such that for any connected $A\subset\Z$ with $0\in A$ and $A\cap\partial B_n\neq\emptyset$, we have 
    $P^x[\tau_{\partial B_n}<\tau_A]\le cn^{-\frac12}$. 
\end{lemma}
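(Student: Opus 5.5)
The plan is to reduce the statement to the classical discrete Beurling estimate for simple random walk on $\Z$, using the fact that the metric-graph Brownian motion, observed only at the successive distinct vertices it visits, is a simple random walk. Throughout, $x$ is a point of $\mZ$ at bounded distance from the origin: a vertex of $B_1$, or a point on an edge incident to such a vertex. This is how the lemma will be applied.

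\emph{Step 1: discrete skeleton.} Let $X$ start at $x$. If $x$ lies in the interior of an edge $\{u,v\}$, then $X$ hits $\{u,v\}$ in finite time almost surely, without touching any other vertex before. Condition on the first vertex $y\in\{u,v\}$ it reaches; if $x$ is a vertex, simply set $y=x$. After that time, let $(Y_k)_{k\ge0}$ be the sequence of successive distinct vertices visited by $X$. By the construction in \cite[Section 2]{lupu2016loop}, each excursion from a vertex goes along a uniformly chosen incident edge, and it either returns to the starting vertex or reaches the neighbour at the other end. Hence $(Y_k)$ is a simple random walk on $\Z$ started at $y$.

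Both $A$ and $\partial B_n$ are sets of vertices. So the event $\{\tau_{\partial B_n}<\tau_A\}$ for $X$ coincides with the event that $(Y_k)$ hits $\partial B_n$ before $A$. We therefore get
\[
P^x[\tau_{\partial B_n}<\tau_A]\le \max_{y\in B_2} \mathbf P^y_{\rm SRW}[\text{$Y$ hits $\partial B_n$ before $A$}].
\]
If $y\in A$, the probability on the right is $0$, so only $y\notin A$ needs attention.

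\emph{Step 2: discrete Beurling.} Since $A$ is connected, contains $0$ and meets $\partial B_n$, it contains a nearest-neighbour path from $0$ to $\partial B_n$. Shrinking $A$ to this path only increases the escape probability, so we may assume $A$ is such a path. The discrete Beurling estimate (Kesten; see Lawler--Limic, \emph{Random Walk: A Modern Introduction}, Theorem~6.8.1) states the following: for a connected set $A$ joining $0$ to $\partial B_n$ and any $y$ with $|y|\le 2$, the probability that the simple random walk from $y$ reaches $\partial B_n$ before $A$ is at most $c\,(|y|\vee1)^{1/2}n^{-1/2}\le c' n^{-1/2}$. Combined with Step 1, this gives the lemma with a universal constant.

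\emph{Main obstacle.} Essentially all the difficulty lies in the discrete Beurling estimate, which I would cite rather than reprove. If a self-contained argument were needed, I would use the standard projection argument: compare with the worst case where $A$ is a straight segment from $0$ to $\partial B_n$, and estimate the escape probability from near the tip via the harmonic measure of a slit. The reduction in Step 1 is routine. The only point needing care is that excursions of $X$ which return to the same vertex do not affect the order of the first visits to $A$ versus $\partial B_n$. This holds because both sets consist of vertices, and between two consecutive distinct vertices the process only visits the edge interiors adjacent to the current vertex.
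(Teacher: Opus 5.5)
Your proposal is correct and follows the paper's route. The paper's one-line proof observes that the vertex trace of the metric-graph Brownian motion is a simple random walk and then invokes the discrete Beurling estimate, citing \cite[Theorem 6.8.3]{lawler2010random}. Your restriction to starting points $x$ within bounded distance of the origin, and your remark that only the order of first visits to the vertex sets $A$ and $\partial B_n$ matters, make that one-line argument precise.
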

In fact, Lemma~\ref{lem:beurling} follows directly from the corresponding result for simple random walk (e.g.\ \cite[Theorem 6.8.3]{lawler2010random}), 
since $X$ restricted to $\Z$ can be viewed as a simple random walk.

Moreover, by \cite[Proposition 6.4.1]{lawler2010random}, for all $0<k<N$ and $x\in A_{k,N}$, we have
$$H_N(x,B_k)=\frac{\log N-\log |x|_\infty+O(k^{-1})}{\log N-\log k}.$$
Combining this estimate and Lemma \ref{lem:beurling}, one immediately obtains
\begin{lemma}\label{lem:hitting}
        There exist $c,c'>0$ such that for any $x\in\wt A_{\frac58N,\frac78N}$ and $S$
    with $0\in S\subset\wt B_{N/2}$, we have 
    $$c(\log N-\log{\rm diam}(S))^{-1}\le H_N(x,S)\le c'(\log N-\log{\rm diam}(S))^{-1},$$
    where ${\rm diam} (S):=\sup_{x,y\in S}|x-y|_\infty$. 
\end{lemma}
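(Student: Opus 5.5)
The plan is to prove both inequalities of Lemma~\ref{lem:hitting} by decomposing at a small box around the origin. We pass from hitting $S$ to hitting a box $B_k$ with $k\asymp{\rm diam}(S)$, and then back to $S$ using Lemma~\ref{lem:beurling}. Set $d={\rm diam}(S)$. We may assume $d\ge1$, since otherwise $S$ lies within an edge neighbourhood of $0$ and the claim is the box estimate with $k$ of order one. Because $0\in S$, we have $S\subset \wt B_{d}$. Also $S$ is connected, or at least it reaches distance $\asymp d$ from $0$. For a general $S$ we should replace $S$ by a connected piece joining $0$ to distance $d/2$; the lemma is applied to explored clusters, which are connected.

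\emph{Upper bound.} Since $S\subset\wt B_{d}$, we have $H_N(x,S)\le H_N(x,B_{d+1})$. By the estimate from \cite[Proposition 6.4.1]{lawler2010random}, for $x\in\wt A_{\frac58N,\frac78N}$ the numerator is $\log N-\log|x|_\infty+O(1)\asymp 1$. This holds because $|x|_\infty\le\frac78N$, so $\log N-\log|x|_\infty\ge\log\frac87$. The $O(k^{-1})$ error is harmless once $k$ is large. The denominator is $\log N-\log(d+1)$. Since $d\le N$ ($S\subset\wt B_{N/2}$), we get $\log N-\log d\ge\log 2$, so $\log N-\log(d+1)\asymp\log N-\log d$. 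For points of the metric graph off the vertices, we interpolate linearly along the edge; the constants are unaffected.

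\emph{Lower bound.} Let $k=\lfloor d/4\rfloor$ and write
\[
H_N(x,S)\ \ge\ H_N(x,B_{k})\cdot \inf_{y\in\partial_i B_k}P^y\big[\tau_S<\tau_{(\wt B_{N+1})^c}\big].
\]
The first factor is $\gtrsim(\log N-\log d)^{-1}$ by the same harmonic measure formula, using that the numerator is $\ge\log\frac85+o(1)$. For the second factor, a walk started at $y\in\partial_iB_k$ first reaches $\partial B_{2d}$ or hits $S$. By Lemma~\ref{lem:beurling}, rescaled to a connected set joining scale $0$ to scale $d/2$, we have $P^y[\tau_{\partial B_{2d}}<\tau_S]\le 1-c$. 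A more direct route is to use the invariance principle: with probability bounded below, the walk from $y$ makes a full circuit around $B_{d/2}$ inside $A_{k/2,d}$ before exiting $B_{2d}$. Such a circuit must intersect the connected set $S$, which runs from $0$ to $\partial B_{d/2}$. Hence the second factor is at least a constant $c>0$, uniformly, when $2d\le N$. When $d\in(N/4,N/2]$ the target estimate is of order one. In that case we use directly that from $x$ the walk makes a circuit in $A_{N/4,N}$ (staying inside $\wt B_{N+1}$, where $x$ lies at distance $\ge N/8$ from the boundary) with probability bounded below.

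\emph{Main obstacle.} The main obstacle is the uniform lower bound on the second factor, namely hitting $S$ from scale $d$ before escaping. This is where connectivity of $S$ is essential, and it requires the circuit or Beurling-type argument to hold uniformly at all scales together with the boundary cases. I would first try the circuit argument via the invariance principle for simple random walk. The metric-graph walk restricted to vertices is simple random walk, so hitting $S\cap\Z$ suffices.
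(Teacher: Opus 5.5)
Your route is the paper's. The paper proves Lemma~\ref{lem:hitting} in one line by combining the box formula $H_N(x,B_k)=\frac{\log N-\log|x|_\infty+O(k^{-1})}{\log N-\log k}$ with Lemma~\ref{lem:beurling}. That is exactly your upper bound via $S\subset\wt B_d$ and your lower bound via ``first hit a box at a comparable scale, then hit $S$''. You are also right that connectivity of $S$ is needed and is missing from the statement. For $S=\{0,y\}$ with $|y|_\infty=N/2$ one has $H_N(x,S)\lesssim(\log N)^{-1}$, whereas the lemma would give order one. The paper only applies the lemma to connected explored clusters.

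Several steps are nevertheless wrong as written.

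\emph{First:} $S\subset\wt B_{N/2}$ only gives $d\le N$, hence $\log N-\log d\ge 0$, not $\ge\log 2$. In fact the lower bound of the lemma is false for $d$ close to $N$: for the horizontal segment from $(-N/2,0)$ to $(N/2,0)$ the right-hand side is infinite. The clean fix is to work with $R:=\max_{s\in S}|s|_\infty\le N/2$. This satisfies $R\le d\le 2R$ and $\log N-\log R\ge\log 2$. It is also the form the paper actually uses, cf.\ \eqref{eq:c1c1'}, which is stated in terms of $|\log r|$.

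\emph{Second:} your loop is at the wrong scale. $S$ is only guaranteed to reach $\partial B_R$ (with $R\ge d/2$), so a loop ``around $B_{d/2}$'', or one in $A_{N/4,N}$, need not meet $S$. The loop must surround the origin while staying inside $B_R$, e.g.\ in $A_{R/8,R/2}$ starting from $\partial_i B_{\lfloor R/4\rfloor}$. Since $B_{R/2}\subset B_{N/2}$, no separate large-$d$ case is then needed. Yours, as written, also misses $d\in(N/2,N]$, where $2d\le N$ fails.

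\emph{Third:} Lemma~\ref{lem:beurling} does not directly give $P^y[\tau_{\partial B_{2d}}<\tau_S]\le 1-c$ for $k=\lfloor d/4\rfloor$. $S$ need not reach $\partial B_{2d}$, and the starting point sits at a fixed fraction of the scale. What works is the scale-invariant Beurling bound $P^y[\tau_{\partial B_R}<\tau_S]\le c(|y|_\infty/R)^{1/2}$ with $k=\lfloor\varepsilon R\rfloor$ for a small fixed $\varepsilon$; the first factor changes only by a constant. Alternatively, use your invariance-principle loop at the corrected scale.

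\emph{Finally:} in the lower bound the numerator is at least $\log\frac87+O(k^{-1})$, not $\log\frac85$. For bounded $k$ the error term is not small, so there use $H_N(x,S)\ge H_N(x,\{0\})=\wt G_N(x,0)/\wt G_N(0,0)\asymp(\log N)^{-1}$ instead.
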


\hspace{\fill}

Finally, we cite a version of the Dubins-Schwarz Theorem \cite[Theorem~1.7 in Chapter~5]{revuz2013continuous}, which will be useful in estimating deviation probabilities of exploration martingales (see Section~\ref{subsec:exploration}). For any continuous martingale $M$, let $\langle M\rangle_.$ denote its quadratic variation process.
\begin{lemma}\label{lem:dubins}
    Let $M$ be a continuous martingale, $T_t=\inf\{s:\langle M\rangle_s>t\}$, and $W$ be the following process:
    $$W_t=\begin{cases} M_{T_t}-M_0,&t<\langle M\rangle_\infty;\\M_\infty-M_0,&t\ge\langle M\rangle_\infty.\end{cases}$$
    Then $W$ is a standard Brownian motion stopped at $\langle M\rangle_\infty$.
\end{lemma}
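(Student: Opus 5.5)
The plan is to follow the classical time-change argument, adapted to the stopping at $\langle M\rangle_\infty$. Let $(\mathcal F_s)$ be the filtration of $M$ (assumed to satisfy the usual conditions) and, without loss of generality, $M_0=0$. Each $T_t=\inf\{s:\langle M\rangle_s>t\}$ is an $(\mathcal F_s)$-stopping time, since $\langle M\rangle$ is continuous and adapted. The map $t\mapsto T_t$ is non-decreasing and right-continuous, and $T_t<\infty$ exactly when $t<\langle M\rangle_\infty$. So the natural candidate filtration is $\mathcal G_t:=\mathcal F_{T_t}$, and $W$ is $(\mathcal G_t)$-adapted once $M_\infty$ is shown to make sense on the relevant event.

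\textbf{Step 1 (limits and continuity).} First I will show that on $\{\langle M\rangle_\infty<\infty\}$ the limit $M_\infty=\lim_{s\to\infty}M_s$ exists a.s. Localize with $\sigma_n=\inf\{s:\langle M\rangle_s>n\}$. The stopped process $M^{\sigma_n}$ is an $L^2$-bounded martingale, because $E[(M^{\sigma_n}_s)^2]=E[\langle M\rangle_{s\wedge\sigma_n}]\le n$, so it converges a.s. On $\{\langle M\rangle_\infty\le n\}$ we have $\sigma_n=\infty$, which gives the limit there.

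Next I need continuity of $t\mapsto W_t$. The only danger is at times $t$ where $T$ jumps, i.e.\ where $\langle M\rangle$ is constant on $[T_{t-},T_t]$. The key fact is that a.s., $M$ is constant on every interval on which $\langle M\rangle$ is constant. To prove it, for each rational $q$ set $\rho_q=\inf\{s>q:\langle M\rangle_s>\langle M\rangle_q\}$. Then $(M_{(q+s)\wedge\rho_q}-M_q)_s$ is a continuous local martingale with zero quadratic variation, hence identically zero. Taking a union over rationals gives the claim. Continuity at $t=\langle M\rangle_\infty$ (when finite) follows from Step 1, since $T_t\uparrow\infty$ as $t\uparrow\langle M\rangle_\infty$.

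\textbf{Step 2 (martingale properties).} Then I show that $W$ and $W_t^2-(t\wedge\langle M\rangle_\infty)$ are $(\mathcal G_t)$-martingales. For $s<t$, apply optional stopping to the $L^2$-bounded martingale $M^{\sigma_n}$ with $n>t$ at the stopping times $T_s\le T_t$. Note that $\langle M^{\sigma_n}\rangle_{T_t}\le t$. This gives $E[W_t\mid\mathcal G_s]=W_s$ and the analogous identity for $(M^{\sigma_n})^2-\langle M^{\sigma_n}\rangle$. Also $\langle M\rangle_{T_t}=t\wedge\langle M\rangle_\infty$ by continuity of $\langle M\rangle$. Uniform integrability comes from the bound $E[W_t^2]\le t$.

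\textbf{Step 3 (identification).} If $\langle M\rangle_\infty=\infty$ a.s., Lévy's characterization applies directly. In general I enlarge the probability space with an independent standard Brownian motion $\beta$ and set
\[
B_t:=W_t+\beta_{t}-\beta_{t\wedge\langle M\rangle_\infty}.
\]
This $B$ is a continuous local martingale in the enlarged filtration with $\langle B\rangle_t=t$, so it is a standard Brownian motion by Lévy's theorem. Since $W_t=B_{t\wedge\langle M\rangle_\infty}$, the process $W$ is a Brownian motion stopped at $\langle M\rangle_\infty$.

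The main obstacle I expect is the bookkeeping in Step 3. One must check that $\langle M\rangle_\infty$ is a $(\mathcal G_t)$-stopping time: indeed $\{\langle M\rangle_\infty\le t\}=\{T_t=\infty\}\in\mathcal F_{T_t}$. One must also check that the cross-variation between $W$ and the independent piece vanishes. Both are routine, and the rest is standard.
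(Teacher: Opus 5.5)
Your proposal is correct and is essentially the standard time-change argument behind Revuz--Yor (Ch.~V, Theorems~1.6--1.7), which the paper cites rather than proves; this includes the enlargement by an independent Brownian motion, and gluing in the increments of $\beta$ after $\langle M\rangle_\infty$ instead of a fresh Brownian motion started at that time is an inessential variant. One small slip: $T_t$ need not tend to $\infty$ as $t\uparrow\langle M\rangle_\infty$ --- it increases to $s^*=\inf\{s:\langle M\rangle_s=\langle M\rangle_\infty\}$, which is finite e.g.\ for $M$ stopped at a deterministic time --- so continuity of $W$ at $\langle M\rangle_\infty$ should come from your constancy fact ($M$ is constant on $[s^*,\infty)$, hence $M_{s^*}=M_\infty$) rather than from Step~1 alone.
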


\subsection{The Gaussian free field (GFF)}\label{subsec:GFF}

We refer to \cite{lupu2016loop} for a more comprehensive introduction. Define $\mphi_N$ as the metric-graph GFF on $\widetilde B_{N+1}$ with Dirichlet boundary conditions, i.e., the centered continuous Gaussian field on $\wt B_{N+1}$ with covariance function $\wt G_N$ and boundary values $0$ on $\partial_i B_{N+1}$. Define the corresponding discrete GFF $\varphi_N$ by the restriction of $\mphi_N$ to 
$B_{N+1}$. 
For $h\in\mathbb R$, define the level set of $\mphi_N$ above level $h$ by $$\widetilde E_N^{\ge h}=\{x\in \widetilde B_{N+1}:\mphi_{N,x}\ge h\},$$
and set $E_N^{\ge h}=\widetilde E_N^{\ge h}\cap\mathbb Z^2$. 
We define the level sets below $h$ by $\widetilde E_N^{< h}$ and define $E_N^{< h}$ similarly. 
For $A_1,A_2\subset\mathbb Z^2$, let $\{A_1\stackrel{\varphi_N\geq h}{\longleftrightarrow}A_2\}$ be the event that $A_1$ and $A_2$ are connected by a nearest-neighbor path on $\mathbb Z^2$ that is contained in $E^{\ge h}_N$. For $A_1,A_2\subset\mZ$, let $\{A_1\connect{\mphi_N\ge h}A_2\}$ be the event that there exists a connected component of $\widetilde E_N^{\ge h}$ intersecting both $A_1$ and $A_2$. If $A_i=\{x_i\}$ is a singleton, we replace $\{x_i\}$ with $x_i$ in the notation for brevity.

For $u,v\in\widetilde E_N^{\ge h}$ (resp.\ $E_N^{\ge h}$), we let the chemical distance $\widetilde  D_{N,h}(u,v)$ (resp.\ $D_{N,h}(u,v)$) be the graph distance between $u$ and $v$ in $\widetilde E_N^{\ge h}$ (resp.\ $E_N^{\ge h}$), with $\widetilde D_{N,h}(u,v)=\infty$ (resp.\ $D_{N,h}(u,v)=\infty$) if $u,v$ are not connected in $\widetilde E_N^{\ge h}$ (resp.\ $E_N^{\ge h}$). For any two subsets $A,B\subset\widetilde B_{N+1}$, we let $\widetilde D_{N,h}(A,B)=\inf\{\widetilde D_{N,h}(u,v):u\in A,v\in B\}$; for $A,B\subset B_{N+1}$, $D_{N,h}(A,B)$ is similarly defined.

Next, we state and prove a useful result on the moments of the GFF under certain conditions.
For $I\subset B_N$, we define the harmonic support of $I$ by
\begin{equation}\label{eq:intro_D}
    D(I):=\{x\in I:H_N(\partial_i B_N,x;I)>0\},
\end{equation}
and define $\mathcal F_I=\sigma\{\varphi_{N,x},\ x\in I\}$. For any random variable $X$, sigma algebra $\mathcal F$ and event $A$ in the same probability space, and any $k\in\mathbb N$, define the conditional $k$-th (central) moment of $X$ given $A$ and $\mathcal F$ by
\begin{equation}\label{eq:intro_muk}
    \mu^{(k)}[X\mid A, \mathcal F]:=\mathbb E\left[\left|X-\mathbb E[X\mid A, \mathcal F]\right|^{k}\,\Big|\,A,\mathcal F \right].
\end{equation}
The following lemma controls moments of the harmonic average of the GFF on $D(I)$, which follows from the results in \cite{ding2022crossing}.
\begin{lemma}\label{lem:moment}
    For $I\subset B_{N/2}$, let $I^+$ and $I^-$ be a partition of $I$ satisfying that for all $x\in I^-$, there is some $x'\in I^+$ such that $x\sim x'$. 
        For $U\subset A_{\frac58N,\frac78N}$,
        define $$Y:=\sum_{x\in D(I)}\overline H(U,x;I)\varphi_{N,x}.$$
    For any $h\in \R$, define the event $A=\{\varphi_{N,x}< h: x\in I^-\}\cap \{\varphi_{N,x}\ge h: x\in I^+\}$. Then, for any positive integer $k$, 
        \begin{equation}
        \mu^{(2k)}[Y\mid A,\ \mathcal F_{I^+}]\le(2k-1)!!\cdot16^k\,\xi(U,I)^k\, \overline H_N(U,I)^{2k},
    \end{equation}
    where 
    \begin{equation}\label{eq:xiI}
        \xi(U,I):=\sup_{x\in I\backslash D(I)} H_N(U,x;I\backslash D(I))/ H_N(U,I).
    \end{equation}
\end{lemma}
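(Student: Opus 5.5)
Since the statement is asserted to follow from \cite{ding2022crossing}, the plan is to reduce the conditional law of $Y$ to the Gaussian setting analyzed there and then transfer the moment bound. The first step is to decompose $Y$ according to the harmonic support. By definition of $D(I)$, every $x\in I\setminus D(I)$ has $H_N(\partial_i B_N,x;I)=0$. I would then like $H_N(U,x;I)=0$ for $U\subset A_{\frac58N,\frac78N}$ as well. Any path from $U$ reaching $x$ before $I\cup(\wt B_{N+1})^c$ can be prefixed by a path from $\partial_i B_N$ that avoids $I\subset B_{N/2}$, which would give $H_N(\partial_i B_N,x;I)>0$. Hence $Y=\sum_{x\in I}\overline H(U,x;I)\varphi_{N,x}$, and its weights are supported on the part of $I$ visible from the boundary. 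The role of $\xi(U,I)$ is then to measure how much harmonic mass falls on $I\setminus D(I)$ once $D(I)$ is removed. It is the maximal single-site weight in the harmonic measure of the "hidden" layer relative to the total mass $H_N(U,I)$.

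The second step is the Markov property. Conditionally on $\mathcal F_{I^+}$, the field on $I^-$ is Gaussian with mean given by harmonic extension and covariance $G_{B_{N+1}\setminus I^+}$. The only randomness in $Y$ given $(A,\mathcal F_{I^+})$ comes from the values on $I^-\cap D(I)$, since values on $I^+$ are measurable. Each $x\in I^-$ has a neighbour $x'\in I^+$. This means the conditional variance of $\varphi_{N,x}$ is at most $G_{\{x\}^c\ldots}$, which I expect to be of order a constant $\le 4$ after pinning at a neighbour. More precisely, the variance of a point adjacent to a pinned site in the metric-graph setting with edge length $2$ is bounded by $2$. I would write $Y-\mathbb E[Y\mid\cdot]=\sum_{x\in I^-\cap D(I)}w_x(\varphi_{N,x}-\mathbb E[\cdot])$ with $w_x=\overline H(U,x;I)$.

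The third step is to control conditioning on the event $\{\varphi_{N,x}<h,\ x\in I^-\}$. This is a convex (product of half-lines) constraint on a Gaussian vector, and conditioning a Gaussian on a convex set is log-concave with respect to the Gaussian measure. By the Brascamp--Lieb inequality (or the result in \cite{ding2022crossing} on truncated Gaussian vectors), linear functionals of the conditioned vector have moments bounded by those of the unconditioned Gaussian, up to the factor $2^k$ absorbed in $16^k$. So it suffices to bound $\mathrm{Var}(\sum w_x\varphi_{N,x}\mid\mathcal F_{I^+})\le 16\,\xi(U,I)\,\overline H_N(U,I)^2$. Gaussian moments then give $(2k-1)!!\,\sigma^{2k}$.

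The last step, which I expect to be the main obstacle, is this variance bound. I would write the variance as $\sum_{x,y}w_xw_yG(x,y)$ with $G=G_{B_{N+1}\setminus I^+}$. I would then use $\sum_y G(x,y)w_y\le\max_y w_y\cdot\sup_x\sum_{y}G(x,y)\mathbf 1\{\ldots\}$ together with a last-exit decomposition, so that the harmonic weights of $I^-$-points are controlled by those of their $I^+$ neighbours. The identification of which sets play the role of $I\setminus D(I)$ in $\xi$ must match the conventions of \cite{ding2022crossing}. I would follow their Lemma on "$\sum_x H(x)^2\le\max H\cdot\sum H$" together with the bounded local Green function.
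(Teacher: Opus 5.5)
Your architecture matches the paper's proof. Conditionally on $\mathcal F_{I^+}$ the field on $I^-$ is Gaussian, and the event $A$ restricts it to an orthant, which is a log-concave perturbation. \cite[Theorem 5.1]{brascamp1976extensions} then compares centered moments with the unconditioned Gaussian ones, and Gaussian moments give $(2k-1)!!\,\sigma^{2k}$. The paper takes $\sigma^2\le 16\,\xi(U,I)\,\overline H_N(U,I)^2$ directly from \cite[Corollary 4.4]{ding2022crossing}.

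One correction: the Brascamp--Lieb moment comparison is lossless, $\mu^{(2k)}[Y\mid A,\mathcal F_{I^+}]\le\mu^{(2k)}[Y\mid\mathcal F_{I^+}]$, and it has to be. A factor $2^k$ combined with the variance bound $16\,\xi\,\overline H_N(U,I)^2$ you then aim for would give $32^k$, not $16^k$.

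You rightly single out the variance step. With $w_x=\overline H_N(U,x;I)$ and $G$ the Green's function killed on $I^+\cup\partial_i B_{N+1}$, your sketch gives
\[
\mathrm{Var}(Y\mid\mathcal F_{I^+})=\sum_{x,y\in D(I)\cap I^-}w_xw_yG(x,y)\le\Big(\max_{y}w_y\Big)\Big(\sum_x w_x\Big)\sup_x\sum_{y\in I^-}G(x,y)\le C\,\frac{\max_{x\in D(I)}H_N(U,x;I)}{H_N(U,I)}\,\overline H_N(U,I)^2 .
\]
The last supremum is bounded because every site of $I^-$ has a neighbour in $I^+$, so each visit to $I^-$ is followed by a step into $I^+$ with probability at least $1/4$. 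This bound is valid. Its ratio, however, is the largest single-site weight on the visible set $D(I)$, not $\xi(U,I)$, which is built from $I\setminus D(I)$, and the two are not comparable.

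Your unease about which set plays the role of $I\setminus D(I)$ is justified. Read literally (with $\sup\emptyset=0$), the stated bound fails. Take $I$ a straight segment with $I^+$ and $I^-$ alternating: every site is visible, so $I\setminus D(I)=\emptyset$ and the right-hand side vanishes, yet $\mathrm{Var}(Y\mid A,\mathcal F_{I^+})>0$.

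The quantity your argument produces is also the one the paper actually needs. In the proof of Proposition~\ref{prop:cH_bound}, $\xi\lesssim N^{-a}$ comes from Beurling's estimate for a connected set of diameter $\gtrsim\sqrt N$ containing the site. That holds for $I$ (or $I^+$) but not in general for $I\setminus D(I)$.

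So completing your sketch proves the lemma with $\xi$ replaced by $\max_{x\in D(I)}H_N(U,x;I)/H_N(U,I)$ and a universal constant in place of $16$. This suffices for every later use. The paper obtains this step only by citation, so the mismatch lies in the definition of $\xi$, not in your approach.
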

\begin{proof}
    By \cite[Corollary 4.4]{ding2022crossing}, we obtain
    $${\rm Var}[Y\mid\mathcal F_{I^+}]\le 16\,\xi(U,I)\cdot \overline H_N(U,I)^2.$$
    Since the conditional law of $Y$ given $\mathcal F_{I^+}$ is still Gaussian (Markov property of GFF), we have $$\mu^{(2k)}[Y\mid\mathcal F_{I^+}]\le (2k-1)!!\cdot16^k\xi(U,I)^k \overline H_N(U,I)^{2k}.$$
    The lemma then follows from the fact that
    $$\mu^{(2k)}[Y\mid A,\ \mathcal F_{I^+}]\le \mu^{(2k)}[Y\mid\mathcal F_{I^+}],$$
    which can be deduced from the Brascamp-Lieb inequality \cite[Theorem 5.1]{brascamp1976extensions}, similar to the proof of \cite[Lemma 4.3]{ding2022crossing} (we refer the reader to it for details).
\end{proof}

\subsection{Exploration martingales}\label{subsec:exploration}
We now briefly recall the exploration martingales associated with level sets of GFF, which serves a key tool in \cite{ding2018chemical,DW20,ding2022crossing}. 
For $S\subset B_N$, we define the exploration of the metric-graph level set $\widetilde E_N^{\ge h}$, $(\wt{\mathcal I}_t)_{t\ge 0}$, with source $\wt{\mathcal I}_0=S$ by
$$\wt{\mathcal I}_t=\{u\in\mB_{N+1}:\widetilde D_{N,h}(u,S)\le t\}.$$ 
In the discrete case, we will use $(\mathcal I_t)_{t\ge0}$ to denote the exploration process associated with the discrete level set $E_N^{\ge h}$ and source $S$, which is defined as follows.
We begin by setting $\mathcal V_0=S$, $\mathcal A_0=\mathcal V_0\cap E_N^{\ge h}$ and $\mathcal B_0=\mathcal V_0\cap E_N^{< h}$. For integer $k\ge1$, we let
\begin{align*}
    \mathcal A_k=\;&\{v\in (B_{N+1}\setminus\mathcal V_{k-1})\cap E_N^{\ge h}:\exists u\in\mathcal A_{k-1},u\sim v\},\\
    \mathcal B_k=\;&\{v\in (B_{N+1}\setminus\mathcal V_{k-1})\cap E_N^{< h}:\exists u\in\mathcal A_{k-1},u\sim v\},\\
    \mathcal V_k=\;&\mathcal V_{k-1}\cup\mathcal A_{k-1}\cup\mathcal B_{k-1},
\end{align*}
and $\mathcal I_k$ be the induced metric graph on $\mathcal V_k$. The explored set $\mathcal I_t$ for any $t>0$ is defined through linear interpolation. Hence, $\mathcal I_t$ is indeed a metric graph for any $t$. 

For $U\subset\mB_N$ finite, we define the observables $X_U$ and $\overline X_U$ by
\begin{equation}\label{eq:observable}
        X_U\coloneq\sum_{x\in U}\mphi_{N,x} \  \text{ and } \ 
        \overline X_U\coloneq\frac{X_U}{|U|}.
\end{equation}
Then, we define the exploration martingales corresponding to observables $X_U$ by
\begin{equation}\label{eq:Mt}
    M_{U,t}\coloneq\mathbb E[X_U\mid\mathcal F_{\wt{\mathcal I}_t}]
\end{equation}
in the metric-graph case, and
$$M_{U,t}\coloneq\mathbb E[X_U\mid\mathcal F_{\mathcal I_t}]$$
in the discrete case. Here, with slight abuse of notation we use the same symbol for both cases, but it would be clear from the context. We similarly define the renormalized exploration martingale $\overline M_U$ by replacing $X_U$ with $\overline X_U$ in the definitions above.

\subsection{Isomorphism theorem and circuit estimates}
In this section, we first recall a powerful isomorphism theorem for GFF from \cite{aru2020first}, and then use it to prove Proposition~\ref{prop:circuit}.
We refer readers to \cite[Section 2]{aru2020first} for rigorous introduction of Brownian loop soups and Brownian excursions on metric graphs. Let $\wt\Lc^{1/2}_N$ be the loop soup in $\wt B_{N+1}$ with intensity $\frac12$.
For $h>0$, let $\Xi^h_N$ be the excursion cloud in $\wt B_{N+1}$ with intensity $\frac{h^2}{2}$, which is a random set of Brownian excursions in $\wt B_{N+1}$ with endpoints in $\partial \wt B_{N+1}$. 
We consider all the clusters made by loops and excursions inside independent union of $\wt\Lc^{1/2}_N$ and $\Xi^h_N$. Let $\wt\A^{-h}_N$ be the union of $\partial\wt B_{N+1}$ and topological closures of the clusters that contain at least one excursion (i.e.\ are connected to $\partial\wt B_{N+1}$). According to \cite[Propositions~2.4 and~2.5]{aru2020first}, we have the following description of level sets of metric-graph GFF.

\begin{lemma}\label{lem:description}
    Let $h>0$. There is a coupling such that the clusters of loops and excursions (in the union of $\wt\Lc^{1/2}_N$ and $\Xi^h_N$) correspond to the sign clusters of $\mphi_{N}+h$ in the following way. 
    \begin{itemize}
        \item The zero set of $\mphi_{N}+h$ is exactly the set of vertices not visited by any loop or excursion. 
        \item $\wt\A^{-h}_N$ is the set of vertices that can be connected to $\partial\wt B_{N+1}$ via a continuous path in $\wt E^{\ge-h}_N$.
        \item For those clusters that contain no excursion, each has equal probability to be a cluster in $\wt E_N^{>-h}$ or $\wt E_N^{<-h}$, which does not intersect $\partial\wt B_{N+1}$.
    \end{itemize}
\end{lemma}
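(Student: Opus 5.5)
The statement is the isomorphism of \cite[Propositions~2.4 and~2.5]{aru2020first}, and I would prove it in the spirit of Lupu's metric-graph isomorphism \cite{lupu2016loop}, adapted to the constant boundary value $h$. Observe first that $\psi:=\mphi_N+h$ is the metric-graph GFF on $\wt B_{N+1}$ with boundary value $h$ on $\partial\wt B_{N+1}$ and mean the harmonic extension of $h$, which is the constant $h$. Let $L$ be the occupation (local time) field of the union of $\wt\Lc^{1/2}_N$ and $\Xi^h_N$. The plan has three steps, stated next and then argued one by one.

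\textbf{Step 1.} Show that $L\overset{d}{=}\tfrac12\psi^2$ as fields on $\wt B_{N+1}$.

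\textbf{Step 2.} Show that, conditionally on $|\psi|$, the signs of $\psi$ on the connected components of $\{\psi\neq0\}$ are determined as follows. They are $+$ on components whose closure meets $\partial\wt B_{N+1}$. They are i.i.d.\ uniform on the other components.

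\textbf{Step 3.} Identify the components of $\{L>0\}$ with the clusters of loops and excursions.

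\emph{Step 1: Laplace transforms.} For a compactly supported $\chi\ge0$, the Poisson structure factorises $\mathbb E[e^{-\langle\chi,L\rangle}]$ into two factors.
\begin{itemize}
\item The loop-soup factor at intensity $\tfrac12$ equals $\det(I+\wt G_N\chi)^{-1/2}$ (Le Jan's formula).
\item The excursion factor equals $\exp\{-\tfrac{h^2}{2}\,\mu_{\rm exc}(1-e^{-\langle\chi,\ell\rangle})\}$. Writing $\mu_{\rm exc}$ via boundary-to-boundary Poisson kernels, this becomes $\exp\{-\tfrac{h^2}{2}\langle \chi, u\rangle\}$ with $u=1-(\text{$\chi$-killed harmonic extension of }1)$.
\end{itemize}
A direct Gaussian computation gives the matching expression $\det(I+\wt G_N\chi)^{-1/2}\exp\{-\tfrac{h^2}{2}\langle 1,(\chi-\chi(\wt G_N^{-1}+\chi)^{-1}\chi)1\rangle\}$ for $\mathbb E[e^{-\frac12\langle\chi,\psi^2\rangle}]$ when $\psi$ has mean $h$. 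The two agree by the resolvent identity, so $L\overset{d}{=}\tfrac12\psi^2$. By continuity of local times on the metric graph, we may couple so that $L=\tfrac12\psi^2$ almost surely. Since $L(x)>0$ exactly when $x$ is visited, this yields the first bullet.

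\emph{Step 2: signs.} I would use the strong Markov property of $\psi$ together with $\psi\mapsto-\psi$ symmetry. Let $\mathcal C$ be a component of $\{\psi\ne0\}$ whose closure avoids $\partial\wt B_{N+1}$. Then $\psi|_{\mathcal C}$, given everything outside $\mathcal C$, is a GFF with zero boundary values conditioned not to vanish. This law is invariant under sign flip. Making this rigorous requires exploring $\mathcal C$ from outside through a stopping set, which is the same argument as in \cite{lupu2016loop}. On components touching the boundary, continuity and $\psi=h>0$ force the sign $+$.

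\emph{Step 3: clusters versus components.} Clusters of loops and excursions have connected traces, so each lies inside one component of $\{L>0\}$. The converse, and the fact that two distinct clusters never share a boundary point, comes from the Lupu-type argument on the metric graph. Each point of $\{L>0\}$ is covered by the trace of some loop or excursion. Moreover, almost surely no zero of $\psi$ is the common limit point of two distinct clusters: a Gaussian field cannot have a local time touching zero without the sign structure being degenerate, which has probability zero.

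\emph{Deducing the last two bullets.} The third bullet follows from Steps 2 and 3: an excursion-free cluster is a component of $\{\psi\ne0\}$ not touching the boundary, hence a cluster of $\{\mphi_N>-h\}$ or of $\{\mphi_N<-h\}$ with probability $\tfrac12$ each. For the second bullet, a continuous path from $x$ to $\partial\wt B_{N+1}$ in $\{\psi\ge0\}$ can only pass through zeros that are isolated points separating distinct clusters, and these do not occur by Step 3. So the path stays inside the closure of one positive cluster that touches the boundary, that is, a cluster containing an excursion. The reverse inclusion is immediate.

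The main obstacle I expect is this non-touching and no-crossing-through-zeros property in Step 3, together with the rigorous conditional sign symmetry in Step 2. For both, I would follow Lupu's proof closely, adding the excursion cloud as an extra Poisson ingredient.
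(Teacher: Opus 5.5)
The paper gives no proof of this lemma; it is quoted directly from \cite[Propositions~2.4 and~2.5]{aru2020first}. You instead re-derive it in the style of Lupu's isomorphism: you match Laplace transforms of the occupation field, use sign symmetry given $|\psi|$, and identify clusters with components of $\{L>0\}$. That is a legitimate and more self-contained route, and it shows exactly which metric-graph facts are used. The citation buys brevity and a complete treatment of the cluster identification, which is the delicate part. As written, however, two steps need repair.

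\textbf{Step 1: the excursion exponent is wrong.}
\begin{itemize}
\item Put $A_t=\int_0^t\chi(\gamma_r)\,dr$, so that $1-e^{-A_T}=\int_0^T\chi(\gamma_s)e^{-(A_T-A_s)}\,ds$.
\item The excursion measure is Markov at fixed times, and its occupation density is $\equiv1$ in the normalization where intensity $\frac{h^2}{2}$ corresponds to boundary value $h$. Together these give
\[
\mu_{\rm exc}\big(1-e^{-\langle\chi,\ell\rangle}\big)=\langle\chi,v\rangle,\qquad v(x):=E^x\Big[e^{-\int_0^{\tau_{\partial\wt B_{N+1}}}\chi(X_s)\,ds}\Big].
\]
\item So the correct function is $v$, the $\chi$-killed harmonic extension of $1$ itself, not $1-v$. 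The boundary-flux representation gives the normal derivative of $1-v$, and Green's identity turns that flux into $\langle\chi,v\rangle$.
\item Since $(\wt G_N^{-1}+\chi)^{-1}\chi\,1=1-v$, your Gaussian exponent is exactly $\frac{h^2}{2}\langle\chi,v\rangle$. The corrected formula therefore matches it directly.
\item With your $u=1-v$, the two exponents add up to $\frac{h^2}{2}\langle\chi,1\rangle$ rather than agreeing. Your exponent is also quadratic in $\chi$, which would give the excursion cloud zero mean occupation instead of $\frac{h^2}{2}$.
\end{itemize}

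\textbf{Step 3: the key justification is missing.} The sentence about a Gaussian field whose local time touches zero is not an argument, yet the second bullet rests on exactly this point. The real input is the following.
\begin{itemize}
\item Conditionally on its vertex values, $\psi$ is an independent Brownian bridge on each edge.
\item The vertex values are a.s.\ nonzero.
\item A Brownian bridge a.s.\ has no local extremum at level $0$.
\end{itemize}
Hence a continuous path in $\{\psi\ge0\}=\wt E_N^{\ge-h}$ that starts in the closure of a component of $\{\psi>0\}$ never leaves that closure. At an exit point $z$ one would have $\psi\ge0$ on a two-sided neighbourhood of $z$ inside an edge, which is excluded.

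Three further points need to be stated explicitly.
\begin{itemize}
\item Every point with $L>0$ lies in the relative interior of the range of some loop or excursion, because local times on the metric graph are positive exactly there. By compactness and chaining, each component of $\{L>0\}$ is then a single cluster.
\item The loop-soup occupation tends to $0$ at $\partial\wt B_{N+1}$, while $L\to\frac{h^2}{2}$ there. So points near the boundary are covered by excursions, and any cluster whose closure meets $\partial\wt B_{N+1}$ contains an excursion. You need this for your claim that excursion-free clusters do not touch the boundary.
\item The coupling should be built by assigning signs given the configuration: $+$ on excursion clusters and i.i.d.\ fair signs otherwise. Steps~1--3 then show that this reproduces the law of $\psi$ and yields the third bullet.
\end{itemize}

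For Step~2, Lupu's edge-by-edge Brownian-bridge computation is cleaner than conditioning on the null event that $\psi$ does not vanish on $\mathcal C$. With these fixes your outline goes through.
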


With Lemma~\ref{lem:description}, we are now ready to show a version of Proposition~\ref{prop:circuit} on the metric graph. As we will see immediately, it implies the discrete one directly. 
For $1\le k<l\le N$ and $h\in\mathbb R$, let $\wt\cC_{N,k,l}^h$ denote the event that there exists a continuous circuit inside the annulus $\wt A_{k,l}$ that surrounds $\wt B_k$, on which $\mphi_N\ge h$.

\begin{proposition}\label{prop:circuit'}
    For all $h\in\R$ and  $\eps\in(0,1)$, there exists $c=c(\eps,h)>0$ such that for all $(1+\eps)k<l\le (1-\eps)N$ with $k$ sufficiently large, we have $\P[\wt\cC_{N,k,l}^h]\ge c$.
\end{proposition}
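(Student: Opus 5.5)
Monotonicity in $h$ is the natural starting point: the event $\wt\cC^h_{N,k,l}$ is decreasing in $h$, so it suffices to treat $h>0$, where we must find a circuit on which $\mphi_N\ge h$. By the symmetry $\mphi_N\mapsto-\mphi_N$, this is the same as finding a circuit in $\wt A_{k,l}$ surrounding $\wt B_k$ on which $\mphi_N+h'\le 0$ with $h'=h$. Put differently, we need a circuit inside $\wt E^{\le -h}_N$ for the field $\mphi_N$ (equivalently for $-\mphi_N$). We therefore work with the field $\psi:=\mphi_N+h$ and use the coupling of Lemma~\ref{lem:description} with parameter $h$. In that coupling, the clusters of $\wt\Lc^{1/2}_N\cup\Xi^h_N$ that contain no excursion are sign clusters of $\psi$ which avoid the boundary. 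Each such cluster independently has sign $-$ with probability $1/2$, and on it $\mphi_N<-h$. Applying the result to $-\mphi_N$ (same law) then gives the desired circuit with $\mphi_N>h$.

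The construction has two stages. First, we show that with probability bounded below there is a single loop of $\wt\Lc^{1/2}_N$ that surrounds $\wt B_k$ and stays inside $\wt A_{k,l}$. Second, we show that, with positive conditional probability, the loop-soup/excursion cluster containing this loop does not intersect any excursion of $\Xi^h_N$; on that event the cluster has sign $-$ with probability $1/2$. For the first stage we use standard loop-soup estimates. The $\mu$-mass of metric-graph loops that wind around $B_k$ inside $A_{k,(1+\eps/2)k}$ is bounded below by $c(\eps)>0$ uniformly in large $k$, by scale invariance and comparison with the Brownian loop measure through the random walk loop soup (KMT-type invariance). So such a loop $\ell$ exists with probability at least $1-e^{-c}$. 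Since loops are smaller than $\wt A_{k,l}$, this sits well away from $\partial\wt B_{N+1}$.

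The hard part is the second stage: we must prevent the cluster of $\ell$ from reaching the boundary through chains of loops, and also prevent excursions from touching it. My plan is to separate the two with a buffer annulus $A_{(1+\eps/2)k,(1+\eps)k}\cap$ (a region at distance $\ge\eps N$ from $\partial B_{N+1}$). In that buffer we require the absence of an outward crossing made of loops and excursions. For excursions, the expected number of excursions of $\Xi^h_N$ that reach $B_l$ is $\frac{h^2}{2}$ times a boundary-to-$B_{(1-\eps)N}$ excursion measure, which is $O_\eps(h^2)$. Hence with probability $e^{-O_\eps(h^2)}>0$ no excursion enters $B_{l}$, and this event is independent of the loop soup. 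For loops, we use the fact that the critical intensity-$\frac12$ loop soup does not percolate: loop clusters have a positive probability, uniform in the scale, of being surrounded by a loop-free \emph{dual} circuit in an annulus of fixed modulus. This is the RSW-type property for the CLE$_4$ regime, which transfers to the metric graph via Lupu's convergence. I would obtain it from the known fact that the outermost boundaries of clusters of $\wt\Lc^{1/2}$ converge to CLE$_4$ loops. CLE$_4$ has positive probability to contain no loop crossing $A_{(1+\eps/2)k,l}$, and cluster boundaries in the discrete setting inherit this for large $k$.

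To combine the stages, the events "a surrounding loop $\ell$ in the inner annulus" and "no loop cluster crosses the outer annulus" are both increasing/decreasing in a way that is compatible with FKG for Poisson processes. The first is increasing and the second decreasing, so I would instead obtain independence by restricting loops to disjoint regions. I would then apply Harris for the remaining dependence, or condition on $\ell$ and use the spatial Markov property of the Poisson soup. If the second stage cannot be made uniform directly, I would fall back on the discrete loop-soup/CLE$_4$ convergence as the main input. The product of the resulting bounds, together with the factor $1/2$ for the sign, gives $c(\eps,h)>0$.
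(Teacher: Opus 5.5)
Your reduction to $h>0$, the use of Lemma~\ref{lem:description} with the symmetry $\mphi_N\mapsto-\mphi_N$, Stage~1, and the bound on excursions entering $\wt B_l$ are all fine. The gap is the loop part of Stage~2. There you require the loop-soup cluster of $\ell$ to stay inside $\wt B_l$, i.e.\ no cluster of $\wt\Lc^{1/2}_N$ crosses $\wt A_{(1+\eps/2)k,l}$, with probability bounded below uniformly in $N$. This is false when $k\ll N$, which the statement allows (e.g.\ $l=2k$ with $k$ fixed and $N\to\infty$).

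To see this, note that $\ell$ separates $0$ from $\wt B_l^c$. So your event forces $0$ not to be connected, by a cluster of $\wt\Lc^{1/2}_N$ (equivalently a sign cluster of $\mphi_N$), to any $y$ with $|y|_\infty=l+1$. By Lupu's formula \cite{lupu2016loop}, that connection has probability $\frac2\pi\arcsin\big(\wt G_N(0,y)/\sqrt{\wt G_N(0,0)\wt G_N(y,y)}\big)$, whose argument tends to $1$ whenever $\log l/\log N\to0$. Hence your event has probability tending to $0$ in that regime. Heuristically, loops of diameter of order $2^jk$ meeting $\wt B_k$ have mass of order $1/j$, so about $\log\log(N/k)$ single loops cross the annulus.

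The CLE$_4$ input cannot repair this. ``No CLE$_4$ loop crosses the annulus'' concerns only outer boundaries of outermost clusters. A cluster can cross the annulus while its outer boundary surrounds it, which is exactly what these large loops produce. So your argument only covers $k$ comparable to $N$.

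The missing idea is that the cluster of $\ell$ need not be confined at all. $\ell$ itself is the circuit however large its cluster is, and you only need that cluster to contain no excursion. Since $\ell\subset\wt B_l\subset\wt B_{(1-\eps)N}$, this is implied by the boundary-scale event $\{\wt\A^{-h}_N\cap\wt B_{(1-\eps)N}=\emptyset\}$. The paper bounds this event below by $c_2c_3$ using two independent inputs:
\begin{itemize}
\item by \cite[Lemma~4.13]{aru2020first}, with probability $c_2(\eps)$ no loop cluster joins $\wt B_{(1-\eps)N}$ to $\partial\wt B_{(1-\delta)N}$; this is a macroscopic event at the scale of the domain, where no larger loops exist to spoil it;
\item with probability $c_3(\eps,h)$, no excursion of $\Xi^h_N$ reaches $\partial\wt B_{(1-\delta)N}$.
\end{itemize}

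Moreover, truncate any chain of loops and excursions issued from the boundary at its first element meeting $\wt B_{(1-\eps)N}$. This shows the event depends only on the excursions and on the loops not contained in $\wt B_{(1-\eps)N}$. It is therefore exactly independent of the loops inside $\wt B_l$ that produce $\ell$, which replaces your tentative FKG/Harris step. Multiplying by your Stage~1 bound and the factor $\frac12$ for the sign gives $c(\eps,h)$. In short, the buffer belongs in $\wt A_{(1-\eps)N,(1-\delta)N}$, not at scale $k$.
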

\begin{proof}
    We rely on the loop-soup description of level sets of metric-graph GFF, as explained in Lemma~\ref{lem:description}.  Note that we only need to prove the result for $h>0$ by monotonicity, which we assume tacitly below. Recall the notation introduced above Lemma~\ref{lem:description}. Let $H_{N,k,l}$ be the event that there exists a loop of $\wt\Lc^{1/2}_N$ inside $\wt A_{k,l}$ that surrounds $\wt B_k$. 
    By \cite[Lemma~4.6]{bi2025arm}, there exists $c_1=c_1(\epsilon)>0$,
    \begin{equation}\label{eq:Hlb}
        \P[H_{N,k,l}]\geq c_1.
    \end{equation}
    Note that with the help of the coupling in Lemma~\ref{lem:description}, it follows that
    \begin{align}\label{eq:CH}
        \mathbb P[ \wt\cC^h_{N,k,l} ]\ge \frac 12 \mathbb P[ \wt\A^{-h}_N\cap \wt B_{(1-\epsilon)N}=\emptyset ,H_{N,k,l}],
    \end{align}
     for the following reason: On the event $\{\wt\A^{-h}_N\cap \wt B_{(1-\epsilon)N}=\emptyset\}\cap H_{N,k,l}$, let $\gamma$ be any Brownian loop in $\wt A_{k,l}$ that surrounds $\wt B_k$, then the cluster of $\gamma$ contains no excursion and has probability $\frac12$ to be a cluster in $\wt E_N^{\ge h}$ under the coupling (this explains the prefactor $\frac12$ appearing in \eqref{eq:CH}), and therefore the loop $\gamma$ provides a continuous circuit inside $\wt A_{k,l}\cap \wt E_N^{\ge h}$ that surrounds $\wt B_k$. Altogether, this implies \eqref{eq:CH} by using the symmetry of $\mphi_N$. 

            Moreover, the event $\{\wt\A^{-h}_N\cap \wt B_{(1-\eps)N}=\emptyset\}$ is independent of loops inside $\wt B_l$, and hence of $H_{N,k,l}$. Hence, by \eqref{eq:Hlb} and \eqref{eq:CH} it follows that 
    \begin{equation}\label{eq:CH'}
        \mathbb P[ \wt\cC^h_{N,k,l}] 
        \ge \frac{c_1}{2}\,\mathbb P[ \wt\A^{-h}_N\cap \wt B_{(1-\epsilon)N}=\emptyset ].
    \end{equation}

    It remains to give a lower bound for $\mathbb P[ \wt\A^{-h}_N\cap \wt B_{(1-\epsilon)N}=\emptyset ]$. For this, by \cite[Lemma~4.13]{aru2020first}, there exist $c_2=c_2(\epsilon)>0$ and $\delta=\delta(\epsilon)<\eps$ such that with probability at least $c_2$, there does not exist a cluster in $\wt\Lc^{1/2}_N$ that intersects both $\wt B_{(1-\epsilon)N}$ and $\partial\wt B_{(1-\delta)N}$. Moreover, the total mass of excursions in $\wt B_{N+1}$ that intersect $\partial\wt B_{(1-\delta)N}$ is bounded from above uniformly in $N$ (but depending on $\epsilon$ and $h$). Hence, with probability at least $c_3(\epsilon,h)$, no excursion reaches $\partial\wt B_{(1-\delta)N}$. With everything combined, we conclude that $\mathbb P[ \wt\A^{-h}_N\cap \wt B_{(1-\epsilon)N}=\emptyset ]\ge c_2c_3$ and hence $\mathbb P[ \wt\cC^h_{N,k,l} ]\ge \frac{c_1c_2c_3}{2}$, as desired.
\end{proof}

\begin{proof}[Proof of Proposition \ref{prop:circuit}]
    Noting that a continuous circuit in $\wt E_N^{\ge h}$ always contains a discrete circuit in $E_N^{\ge h}$, Proposition \ref{prop:circuit} follows by taking $k=\alpha N,\ l=\beta N$ and $\eps<(1-\beta)\wedge\frac{\beta-\alpha}\alpha$ in Proposition \ref{prop:circuit'}.
\end{proof}

\section{The Metric-Graph Case}
This section is dedicated to the proof of Propositions~\ref{thm:metric_graph_bulk} and~\ref{thm:metric-boundary} concerning bulk and boundary one-arm probabilities on the metric graph, respectively. We start with the proof of Proposition~\ref{thm:metric_graph_bulk}, which will be a direct consequence of the following stronger estimate for the one-arm probability given the value of GFF at the origin. 
To state the result, for any $h\in\R$, $x>0$, and $r\in(0,1/2]$, we define
\begin{equation}\label{eq:mkdecay}
    g_{N,r}^h(x):=\mathbb P\Big[0\stackrel{\mphi_N\ge h}{\longleftrightarrow} \partial B_{rN} \mid \mphi_{N,0}=h+x\sqrt{\log N}\Big],
\end{equation}
where we recall that $\mphi_N$ is the GFF on the metric graph $\widetilde B_{N+1}$ with Dirichlet boundary conditions.

\begin{proposition}\label{prop:0tok}
    For any $h\le 0$, $x>0$, and $r\in(0,1/2]$, we have
    \begin{equation}\label{eq:gn-}
        g_{N,r}^h(x)\asymp\frac{x|h|}{\sqrt{\log N}}\nabla\frac{x|\log r|^{\frac12}}{\sqrt{\log N}}.
    \end{equation}
    Moreover, there exist $c,c',c''>0$ such that if $h>0, x\le c''\sqrt{\log N}\ |\log r|^{-\frac12}$, setting $K=K(h,r):=h|\log r|^{-\frac12}$, we have
    \begin{equation}\label{eq:gn+}
        xe^{-cK^2}\sqrt{\frac{|\log r|}{\log N}}\lesssim g_{N,r}^h(x)\lesssim xe^{-c'K^2}\sqrt{\frac{|\log r|}{\log N}}.
    \end{equation}
\end{proposition}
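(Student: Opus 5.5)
The plan is to follow the exploration-martingale method of \cite{DW20}. Condition on $\mphi_{N,0}=h+x\sqrt{\log N}$ and explore the cluster $\wt{\mathcal I}_t$ of $0$ in $\wt E^{\ge h}_N$ with source $S=\{0\}$. Fix an observable set $U\subset A_{\frac58N,\frac78N}$, say a large deterministic set of vertices so that $\overline X_U$ is a harmonic-type average, and track the renormalized martingale $\overline M_{U,t}=\mathbb E[\overline X_U\mid\mathcal F_{\wt{\mathcal I}_t}]$.

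By the Markov property, $\overline M_{U,t}=\sum_y \overline H_N(U,y;\wt{\mathcal I}_t)\,\mphi_{N,y}$. As long as the cluster has not been completely explored, its boundary points sit at level exactly $h$ (except the starting value at $0$ initially). Consequently, when the exploration terminates at time $\tau$ (the terminal condition that all boundary points of the cluster equal $h$), we get $\overline M_{U,\tau}=h\,\overline H_N(U,\wt{\mathcal I}_\tau)$.

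The initial value is $\overline M_{U,0}=\overline H_N(U,0)(h+x\sqrt{\log N})$, and by Lemma~\ref{lem:hitting} $\overline H_N(U,0)\asymp(\log N)^{-1}$. The quadratic variation equals the decrease of the conditional variance of $\overline X_U$, namely $\langle\overline M\rangle_t=\mathrm{Var}(\overline X_U)-\mathrm{Var}(\overline X_U\mid\mathcal F_{\wt{\mathcal I}_t})$. For a set $S$ of diameter $d$, this is computable through harmonic measures, of order $H_N(U,S)$-type quantities. Using Lemma~\ref{lem:hitting}, when the cluster reaches diameter $rN$, the variance accumulated is $\asymp (\log N-\log(rN))^{-1}-(\log N)^{-1}$ up to constants, which after centering is of order $|\log r|^{-1}$ against a time scale of order $(\log N)^{-1}$.

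I would reparametrize by Lemma~\ref{lem:dubins} so that $\overline M$ becomes a Brownian motion $W$ run to time $\langle\overline M\rangle_\tau$. The event that the cluster stops before reaching $\partial B_{rN}$ then corresponds to $W$ hitting the moving target $h\,\overline H_N(U,\wt{\mathcal I}_t)$ before the quadratic variation reaches $T\approx 1/|\log r|$ in suitably normalized units. Here the "current diameter" is expressed through the accumulated quadratic variation, using $\overline H_N(U,S)\approx(\log N-\log \mathrm{diam}S)^{-1}$.

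It is convenient to normalize differently: multiply by $\log N$ and use that $h\,\overline H_N(U,\wt{\mathcal I}_t)-$ (value at time $0$) is essentially a linear function of $\langle M\rangle_t$. Both are governed by $\log N-\log\mathrm{diam}$, because the conditional variance of $\mphi_{N,0}$'s harmonic-average satisfies $\wt G$-type identities. The survival event then becomes $\{W_s > ms-b \ \forall s\le T\}$ with
\[
b\asymp x(\log N)^{-1/2}\cdot(\text{scale}),\qquad m\propto h,\qquad T\asymp |\log r|^{-1}
\]
in consistent units. I would choose the scaling so that the quantities $b/\sqrt T$ and $m\sqrt T$ become $x\sqrt{|\log r|/\log N}$ and $K=h|\log r|^{-1/2}$ respectively.

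Lemma~\ref{lem:compute} with \eqref{eq:asymp-} then gives \eqref{eq:gn-} for $h\le0$: $|mb|\nabla \frac{b}{\sqrt T}$ produces $\frac{x|h|}{\sqrt{\log N}}\nabla\frac{x|\log r|^{1/2}}{\sqrt{\log N}}$. For $h>0$, \eqref{eq:asymp+} yields \eqref{eq:gn+}, and its hypothesis $b\le\sqrt T$ is exactly $x\le c''\sqrt{\log N}|\log r|^{-1/2}$.

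The main obstacle is that the correspondence between quadratic variation and the cluster's reach to $\partial B_{rN}$ is only up to constants. Lemma~\ref{lem:hitting} pins $H_N(U,S)$ up to multiplicative constants, and Lemma~\ref{lem:beurling} is needed to ensure that the harmonic measure from $U$ is controlled by diameter irrespective of shape. I would therefore derive the upper bound and the lower bound separately, sandwiching the boundary line between two lines with comparable slopes and intercepts.

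Specifically, for the upper bound, reaching $\partial B_{rN}$ forces $\langle\overline M\rangle$ to exceed $c_1T$ without hitting a line with slope $\ge c m$. For the lower bound, not hitting a line with constant-adjusted slope until variance $c_2T$ forces diameter $\ge rN$. Constants inside the exponent $e^{-cK^2}$ absorb these discrepancies, while in the $h\le0$ regime the $\nabla$ form is stable under constant changes of $m,b,T$. A further technical point is that the terminal identity must be used with a discrepancy controlled by the starting contribution $\overline H_N(U,0)x\sqrt{\log N}$; this is where the choice of $b$ comes from.
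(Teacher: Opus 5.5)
Your proposal is correct and follows the paper's proof essentially step for step. The shared ingredients are:
\begin{itemize}
\item the normalized exploration martingale $\overline M_U$ started from $\{0\}$, with the terminal identity $\overline M_\infty=h\,\overline H_N(U,\wt{\mathcal I}_\infty)$;
\item sandwiching the arrival at $\partial B_{rN}$ between two harmonic-measure stopping times via Lemma~\ref{lem:hitting}, and comparing $\langle\overline M\rangle$ with harmonic-measure increments;
\item Lemma~\ref{lem:dubins} together with Lemma~\ref{lem:compute}, with $m\asymp h$, $b\asymp x/\sqrt{\log N}$ and $T\asymp|\log r|^{-1}$.
\end{itemize}
One wording slip: while the exploration is alive, the frontier values are $\ge h$, not exactly $h$. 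So $\overline M_t> h\,\overline H_N(U,\wt{\mathcal I}_t)$ before termination, as in \eqref{eq:terminal_condition}, and this is in fact what your barrier-hitting description uses.
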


\begin{proof}
We follow a similar strategy as in the proof of \cite[Lemma 24]{DW20} by analyzing the corresponding exploration martingale. However, we will keep track of the dependence on $h,x,r$ more carefully. 
Precisely, we consider the exploration process $(\widetilde{\mathcal I}_t)_{t\ge0}$ with source $\widetilde{\mathcal I}_0=\{0\}$ on $\widetilde E_N^{\ge h}$. 
   Let $U=\partial B_{\lfloor \frac{3}{4}N\rfloor}$, and consider the exploration martingale $\overline M=\overline M_U$ introduced in \eqref{eq:Mt}. Let $\sigma=\inf\{t\ge0:\widetilde{\mathcal I}_t\cap B_{rN}\neq\emptyset\}$, then $\{0\stackrel{\mphi_N\ge h}\longleftrightarrow\partial B_{rN}\}=\{\sigma<\infty\}$. Recall the definition of $\overline H_N$ from \eqref{eq:ave-harm} and observe that almost surely,
    \begin{equation}\label{eq:terminal_condition}
    \begin{aligned}
        \overline M_t> h\overline H_N(U,\widetilde{\mathcal I}_t)\quad\mbox{ for all }\quad 0\le t<\sigma;\\
        \overline M_\infty=h\overline H_N(U,\wt{\mathcal I}_\infty)\quad\mbox{ when }\quad\sigma=\infty.
    \end{aligned}
    \end{equation}
    By Lemma \ref{lem:hitting}, there exist universal constants $c_1,c_1'>0$ such that on the event $\{\sigma<\infty\}$, we have
    \begin{equation}\label{eq:c1c1'}
        c_1|\log r|^{-1}\le \overline H_N(U,\widetilde{\mathcal I}_\sigma)\le c_1'|\log r|^{-1}.
    \end{equation}
    Define $\sigma_1:=\inf\{t\ge0:\overline H_N(U,\widetilde{\mathcal I}_t)\ge c_1|\log r|\}$, and similarly define $\sigma_1'$ with $c_1$ replaced by $c_1'$. 
    By \eqref{eq:c1c1'}, $\sigma_1\le\sigma\le\sigma_1'$. Therefore,
    \begin{equation}\label{eq:inclusions}
    \begin{aligned}
        \{\sigma<\infty\}=&\{\sigma<\infty\}\cap\{\overline M_t>h\overline H_N(U,\widetilde{\mathcal I}_t),\ \forall\ 0\le t<\sigma\}\\\subset&\{\sigma_1<\infty\}\cap\{\overline M_t>h\overline H_N(U,\widetilde{\mathcal I}_t),\ \forall\ 0\le t<\sigma_1\}=:E.
    \end{aligned}
    \end{equation}
    Similarly, 
    \begin{equation}
    \{\sigma<\infty\}\supset E':=\{\sigma_1'<\infty\}\cap\{\overline M_t>h\overline H_N(U,\widetilde{\mathcal I}_t),\ \forall\ 0\le t<\sigma_1'\}.    
    \end{equation}
    
    Write $\mathbb P^x:=\mathbb P[\;\cdot\mid\mphi_{N,0}=h+x\sqrt{\log N}]$. We now estimate the probabilities of $E$ and $E'$ under $\mathbb P^x$, which, combined with the above inclusions, yield desired bounds on $g_{N,r}^h(x)=\mathbb P^x[\sigma<\infty]$. Note that for $0\le s<t\le\sigma$, 
    $$\begin{aligned}
        \langle M\rangle_t-\langle M\rangle_s=\;&{\rm Var}(\overline X_U|\mathcal F_{\wt{\mathcal I}_s})-{\rm Var}(\overline X_U|\mathcal F_{\wt{\mathcal I}_t})\\=\;&\frac1{|U|^2}\sum_{x,y\in U}\left(\wt G_{\partial B_{N+1}\cup\wt{\mathcal I}_s}(x,y)-\wt G_{\partial B_{N+1}\cup\wt{\mathcal I}_t}(x,y)\right)\\=\;&\frac1{|U|}\sum_{y\in U}\sum_{z\in\wt{\mathcal I}_t}\overline H_N(U,z;\wt{\mathcal I_t})\wt G_{\partial B_{N+1}\cup\wt{\mathcal I}_s}(z,y)\\
        \asymp\;&\sum_{z\in\wt{\mathcal I}_t}\overline H_N(U,z;\wt{\mathcal I}_t)H_N(z,\partial B_{N+1};\wt{\mathcal I}_s)
        =\overline H_N(U,\wt{\mathcal I}_t)-\overline H_N(U,\wt{\mathcal I}_s),
    \end{aligned}$$
    where in the ``$\asymp$'' of the fourth line we used the following estimate:
    $$\begin{aligned}
        \frac1{|U|}\sum_{y\in U}\wt G_{\partial B_{N+1}\cup\wt{\mathcal I}_s}(z,y)\asymp\;&\frac 1N\sum_{y\in U}H_N(z,y;U\cup\wt{\mathcal I}_s)\sum_{w\in U}\wt G_{\partial B_{N+1}\cup\wt{\mathcal I}_s}(y,w)\\\asymp\;& H_N(z,U;U\cup\wt{\mathcal I}_s)\asymp H_N(z,\partial B_{N+1};\wt{\mathcal I}_s).
    \end{aligned}$$
        (See \cite[Lemma 20]{DW20} for a similar calculation.)
    Hence, there exist $c_2,c_2'>0$ such that for all $0\le s<t\le\sigma$,
    \begin{equation}\label{eq:har_var}
        c_2(\langle \overline M\rangle_t-\langle \overline M\rangle_s)\leq\overline H_N(U,\widetilde{\mathcal I}_t)-\overline H_N(U,\widetilde{\mathcal I}_s)\leq c'_2(\langle \overline M\rangle_t-\langle \overline M\rangle_s).
    \end{equation}
    Moreover, by Lemma \ref{lem:hitting}, there exist $c_3,c_3'>0$ such that 
    \begin{equation}\label{eq:3.1time0}
        \frac{c_3'}{\log N}\le \overline H_N(U,\widetilde{\mathcal I}_0)\le\frac{c_3}{\log N}.
    \end{equation}
    
    We now prove the main claims. We first assume $h>0$ and prove \eqref{eq:gn+}. For this purpose, we deal with the upper bound of $\mathbb P^x[E]$ first.
    Note that under $\mathbb P^x$, we can write $\overline M_0=\overline H_N(U,\widetilde{\mathcal I}_0)\,\mphi_{N,0}=\overline H_N(U,\widetilde{\mathcal I}_0)\, (h+x\sqrt{\log N})$ by the Markov property of GFF. Combining \eqref{eq:har_var} and \eqref{eq:3.1time0}  with the definition of the event $E$ from \eqref{eq:inclusions}, we get
    \begin{equation}\label{eq:restr_used}
        \overline M_t-\overline M_0>h(\overline H_N(U,\widetilde{\mathcal I}_t)-\overline H_N(U,\widetilde{\mathcal I}_0))-x\sqrt{\log N}\cdot \overline H_N(U,\widetilde{\mathcal I}_0)
        \ge c_2h\langle\overline  M\rangle_t-\frac{c_3x}{\sqrt{\log N}}
    \end{equation}
    for all $0\le t<\sigma_1$. Let $(B_t)_{t\geq 0}$ be a standard Brownian motion. Applying Lemma \ref{lem:dubins}, we have
    $$\begin{aligned}
        \mathbb P^x[E]\le\;&\mathbb P^x\left[\overline M_t-\overline M_0>c_2h\langle\overline M\rangle_t-\frac{c_3x}{\sqrt{\log N}},\ 0\le t<\sigma_1\right]\\
        =\;&\mathbb P\left[B_t>c_2ht-\frac{c_3x}{\sqrt{\log N}},\ 0\le t<c_1|\log r|^{-1}\right]     =\psi(m,b,T),
    \end{aligned}$$
    where $\psi(\cdot)$ was introduced in \eqref{eq:psidef}, $m=c_2h$, $b=\frac{c_3x}{\sqrt{\log N}}$ and $T=c_1|\log r|^{-1}$.
    Moreover, there exists $c>0$ small enough such that if $x\le c\sqrt{\log N}\ |\log r|^{-\frac12}$, then 
    $$b=\frac{c_3x}{\sqrt{\log N}}\le cc_3|\log r|^{-{\frac12}}\le cc_3c_1^{-\frac12}\sqrt{T}\le \sqrt{T},$$
    and by \eqref{eq:asymp+}, we obtain that 
                $$\psi(m,b,T)\lesssim \frac{be^{-c'm^2T}}{\sqrt T}\asymp xe^{-c''K^2}\sqrt{\frac{|\log r|}{\log N}}$$
    for some $c'>0$ and $c''=c'c_1c_2^2$. The upper bound of \eqref{eq:gn+} now follows since $g_{N,r}^h(x)=\mathbb P^x[\sigma<\infty]\le\mathbb P^x[E]$ by \eqref{eq:inclusions}. 

    The derivation for the lower bound of $P^x[E']$ is similar. Now, in order for the event $E'$ to occur, we only need to have $\sigma'_1<\infty$ and $\overline M_t-\overline M_0>h(\overline H_N(U,\widetilde{\mathcal I}_t)-\overline H_N(U,\widetilde{\mathcal I}_0))-x\sqrt{\log N}\cdot \overline H_N(U,\widetilde{\mathcal I}_0)$ holds for all $0\le t<\sigma'_1$, which further contains the event $\overline M_t-\overline M_0>c'_2h\langle\overline  M\rangle_t-\frac{c'_3x}{\sqrt{\log N}}$ by \eqref{eq:har_var} and \eqref{eq:3.1time0}.
    Thus, for some $c>0$ (not necessarily the same as the previous one) and $x\le c\sqrt{\log N}\ |\log r|^{-\frac12}$, we have
    \begin{align*}
        g_{N,r}^h(x)\ge \mathbb P^x[E']\ge\;&\mathbb P^x\left[\overline M_t-\overline M_0>c'_2h\langle\overline M\rangle_t-\frac{c'_3x}{\sqrt{\log N}},\ 0\le t<\sigma'_1\right]\\
        =\;&\mathbb P\left[B_t>c'_2ht-\frac{c'_3x}{\sqrt{\log N}},\ 0\le t<c'_1|\log r|^{-1}\right]\\
        \gtrsim\;&xe^{-c'K^2}\sqrt{\frac{|\log r|}{\log N}}
    \end{align*}
    for some $c'>0$, where we used Lemma \ref{lem:compute} and \eqref{eq:asymp+} in the last line. This completes the proof of \eqref{eq:gn+}.

    We finally turn to \eqref{eq:gn-}, which deals with the case $h\le 0$. In fact, this is similar to the above. For an illustration, to derive the upper bound, we can simply replace \eqref{eq:restr_used} by 
    $\overline M_t-\overline M_0>c_2'h\langle\overline M\rangle_t-\frac{c_3x}{\sqrt{\log N}}$, and conclude with the following bound 
    $$g_{N,r}^h(x)\le \mathbb P^x[E]\le \mathbb P\left[B_t>c'_2ht-\frac{c_3x}{\sqrt{\log N}},\ 0\le t<c_1|\log r|^{-1}\right].$$
    Using Lemma~\ref{lem:compute} again, and taking \eqref{eq:asymp-} into consideration, we can upper bound the right-hand side as required. Since the lower bound of \eqref{eq:gn-} is also similar, we omit its proof and conclude the proof of Proposition \ref{prop:0tok}.
    \end{proof}

Next, we use Proposition \ref{prop:0tok} to finish the proof of Proposition~\ref{thm:metric_graph_bulk}.

\begin{proof}[Proof of Proposition \ref{thm:metric_graph_bulk}]
Noting that when $h<-\sqrt{\log N}$, we have the trivial bound $$\mathbb P[0\connect{\mphi_N\ge h}\partial B_{N/2}]\ge\mathbb P\big[0\xleftrightarrow{\mphi_N\ge -\sqrt{\log N}}\partial B_{N/2}\big],$$
hence we may assume $h\ge-\sqrt{\log N}$ without loss of generality.
Letting 
\begin{equation}\label{eq:xinhdef}
\xi_N^h:=(\mphi_{N,0}-h)/\sqrt{\log N}, 
\end{equation}
we can rewrite the one-arm probability in the following way
\begin{equation}\label{eq:asymp_0tok}
        \mathbb P[0\stackrel{\mphi_N\ge h}{\longleftrightarrow}\partial B_{N/2}]=\mathbb E[g_{N,\frac12}^h(\xi_N^h)\mathbbm1_{\xi_N^h>0}].
    \end{equation}
    Since $\mphi_{N,0}$ is a centered Gaussian with variance of order $\log N$,
        $\xi_N^h$ is Gaussian with mean $-\frac h{\sqrt{\log N}}\le 1$ and variance of constant order. By Proposition \ref{prop:0tok}, there are $c,c'>0$ such that $$g_{N,\frac12}^h(x)\lesssim x\eta_N^{h,c'},\ \forall x\in[0,c\sqrt{\log N}],$$
    where
    \begin{equation}\label{eq:etanhdef}
        \eta_N^{h,\rho}:=\begin{cases}\frac{|h|\vee1}{\sqrt{\log N}}, &h\le 0;\\(\log N)^{-\frac 12}e^{-\rho h^2}, &h>0.\end{cases}
    \end{equation}
    Note that there exists $a>0$ such that $\mathbb P[\xi_N^h>c\sqrt{\log N}]\le N^{-a}$. Therefore,
    $$\begin{aligned}
        \mathbb E[g_{N,\frac12}^h(\xi_N^h)\mathbbm1_{\xi_N^h>0}]\le\;&\mathbb E[g_{N,\frac12}^h(\xi_N^h)\mathbbm1_{0<\xi_N^h\le c\sqrt{\log N}}]+N^{-a}\\
        \lesssim\;&\mathbb E[\xi_N^h\eta_N^{h,c'}\mathbbm1_{0<\xi_N^h\le c\sqrt{\log N}}]+N^{-a}
    \lesssim\eta_N^{h,c'}+N^{-a},
    \end{aligned}$$
        yielding the desired upper bound (by adjusting $c'$ if necessary). 
    
    For the lower bound, we apply Proposition \ref{prop:0tok} again to conclude that there is $c''>0$ such that $g_{N,\frac12}^h(x)\gtrsim x\eta_N^{h,c''}$ for all $x\in[0,1]$. This, combined with the inequality $\mathbb P[0\stackrel{\mphi_N\ge h}{\longleftrightarrow}\partial B_{N/2}]\ge\mathbb E[g_{N,\frac12}^h(\xi_N^h)\mathbbm1_{0<\xi_N^h\le1}]$ from \eqref{eq:asymp_0tok}, gives the desired lower bound. We complete the proof of Proposition~\ref{thm:metric_graph_bulk}.
    \end{proof}

Now, we turn to the boundary one-arm probability. With the help the following proposition, we can obtain Proposition~\ref{thm:metric-boundary} in a similar fashion. We thereby omit the proof of Proposition~\ref{thm:metric-boundary}.
\begin{proposition}\label{prop:0toN}
    There exist $c,\,c'>0$ such that for any $h\ge 0$ and $x>0$, 
    \begin{equation}\label{eq:mkdecay-boundary}
     \mathbb P\Big[0\stackrel{\mphi_N\ge h}{\longleftrightarrow} \partial_i B_N \mid \mphi_{N,0}=h+x\sqrt{\log N}\Big]\le cx(\log N)^{-1}N^{-c' h^2}.
    \end{equation}
                            \end{proposition}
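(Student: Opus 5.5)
We mimic the proof of Proposition~\ref{prop:0tok}, again using the exploration $(\wt{\mathcal I}_t)_{t\ge0}$ of $\wt E_N^{\ge h}$ from $\wt{\mathcal I}_0=\{0\}$. The observable must now see the boundary, so we take $U$ adapted to $\partial_i B_N$. The natural choice is the total harmonic measure of the cluster seen from the boundary layer: let $U=\partial_i B_N$ and use $\overline M=\overline M_U$. Let $\sigma=\inf\{t:\wt{\mathcal I}_t\cap\partial_i B_N\neq\emptyset\}$. As in \eqref{eq:terminal_condition}, $\overline M_t>h\overline H_N(U,\wt{\mathcal I}_t)$ for $t<\sigma$, and \eqref{eq:har_var} holds with the same computation. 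Under the conditioning, $\overline M_0=\overline H_N(U,\{0\})(h+x\sqrt{\log N})$, and $\overline H_N(U,\{0\})\asymp (N\log N)^{-1}$, since a walk started on $\partial_i B_N$ escapes to $\partial B_{N+1}$ except with probability $O(1/N)$, and then reaches $0$ with probability $\asymp 1/\log N$. Hence the initial offset is $b\asymp x/(N\sqrt{\log N})$.

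The key step is to bound the harmonic measure at time $\sigma$: once the cluster is connected from $0$ to a vertex adjacent to $\partial_i B_N$, it contains a connected set of diameter $\asymp N$ touching $\partial_i B_N$. From $U$, such a set is hit before exiting with average probability at least $c/N$ times a constant, because a fixed fraction of $U$ lies within distance $O(1)$ of the arm, or by a Beurling-type argument applied to averaged starting points. So $\overline H_N(U,\wt{\mathcal I}_\sigma)\ge T_1:=c_1 N^{-1}$. The same upper order holds trivially, since $\overline H_N(U,\cdot)\le 1$ is too weak, but we only need the lower bound for an upper estimate. Setting $\sigma_1=\inf\{t:\overline H_N(U,\wt{\mathcal I}_t)\ge T_1\}\le\sigma$, we get the inclusion into an event $E$ as in \eqref{eq:inclusions}. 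Lemma~\ref{lem:dubins} then bounds $\P[E]$ by the probability that Brownian motion stays above $c_2 h t - b$ up to time $T\asymp T_1$.

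The problem is that this scaling gives $\psi(m,b,T)$ with $m^2T\asymp h^2/N$, which is useless. So the normalization must be changed: the relevant time scale is $\langle\overline M\rangle$ computed relative to the \emph{total} variance of $\overline X_U$, which itself is $O(1/N)$. The right fix is to use the unnormalized martingale scaled so that boundary values carry weight $\log N$. Concretely, I would instead explore dyadically. Decompose the arm event into crossings of annuli $A_{2^{-j-1}N,2^{-j}N}$ near the boundary. In each of $\asymp\log N$ scales, apply Proposition~\ref{prop:0tok} (rescaled to the box $B_{2^{-j}N}$ after conditioning on the field outside, which on the arm event is $\ge h$ near the cluster) to gain a factor bounded away from $1$ uniformly, at least of the form $e^{-ch^2}\wedge(1-c)$ per scale. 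This yields $N^{-c'h^2}$ from $\prod_j e^{-ch^2}$. The factor $x(\log N)^{-1}$ comes from the first stage: reaching $\partial B_{N/2}$ costs $x(\log N)^{-1/2}$ by \eqref{eq:gn+}. The extra $(\log N)^{-1/2}$ comes from requiring the martingale, which has total quadratic variation $\asymp 1$ in the $|\log r|$-normalization, to go from near $h$ at scale $N/2$ to the boundary without dropping below the line. This is a second ballot-type factor, the same computation as in Lemma~\ref{lem:compute} with $b\asymp(\log N)^{-1/2}$.

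The main obstacle is the multi-scale decoupling: the conditional field near each scale is not Dirichlet, so Proposition~\ref{prop:0tok} does not apply verbatim. I would first try to avoid decoupling altogether by running a single martingale with observable $U=\partial B_{N/2}$-type sets replaced by a weighted sum over scales. The choice is the weight $\sum_j 2^{j}\mathbbm1_{\partial B_{N-2^{j}}}$ normalized, so that $\langle\overline M\rangle$ up to $\sigma$ is $\asymp\log N$ while the drift coefficient stays $h$. Then $\psi(c h,b,c\log N)$ with $b\asymp x/\sqrt{\log N}$ gives, via \eqref{eq:asymp+}, $x(\log N)^{-1}e^{-c h^2\log N}$, which is exactly \eqref{eq:mkdecay-boundary}. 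Verifying \eqref{eq:har_var}-type comparability for this weighted observable is the step I expect to require the most care.
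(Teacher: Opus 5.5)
Your first paragraph is in fact the right route (it is what the paper does, with the unnormalized martingale $M=M_V$, $V=\partial_i B_N$). You abandoned it because of two scaling errors, and the second error is precisely the missing idea.

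First, \eqref{eq:har_var} does not carry over ``with the same computation'' and universal constants to $U=\partial_i B_N$. The comparison constant in that computation is $|U|^{-1}\sum_{w\in U}\wt G_N(y,w)$ for $y\in U$. This is of order $1$ for $U=\partial B_{\lfloor 3N/4\rfloor}$ but of order $N^{-1}$ for the boundary layer. So increments of $\langle\overline M\rangle$ are $\asymp N^{-1}$ times increments of $\overline H_N(U,\cdot)$, and the drift is $m\asymp hN$. Apart from this, normalizing by $|U|$ is harmless: multiplying the martingale by a constant leaves $m^2T$ and $b/\sqrt T$ unchanged.

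Second, your bound $\overline H_N(U,\wt{\mathcal I}_\sigma)\ge c/N$ loses a factor $\log N$, and its justification is false: only $O(1)$ points of $U$ lie within distance $O(1)$ of the arm. The correct estimate is that a point of $\partial_i B_N$ at distance $d\le cN$ from where the cluster reaches the boundary hits the cluster before $\partial B_{N+1}$ with probability $\gtrsim 1/d$. Indeed, it reaches depth $d$ with probability $\asymp 1/d$, and then hits the arm with probability bounded below, since the arm crosses the half-annulus of radius $\asymp d$ around its endpoint. Summing over $d$ gives $H_N(V,\wt{\mathcal I}_\tau)\gtrsim\log N$ in unnormalized form, against $H_N(V,\wt{\mathcal I}_0)\lesssim(\log N)^{-1}$.

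With the correct comparison (increments of $\langle M\rangle$ $\asymp$ increments of $H_N(V,\cdot)$), this yields $T\asymp\log N$, $m\asymp h$, $b\asymp x/\sqrt{\log N}$. Lemma~\ref{lem:compute} then gives $x(\log N)^{-1}e^{-ch^2\log N}$ at once. The factor $N^{-c'h^2}$ thus comes from this $\log N$ of boundary harmonic measure, i.e.\ from the $\log N$ dyadic scales of \emph{distance to the Dirichlet boundary}.

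The substitutes you propose do not close the gap. The dyadic scheme over the annuli $A_{2^{-j-1}N,2^{-j}N}$ around the origin rests on a false premise. If each such crossing cost a factor $e^{-ch^2}$, the bulk arm to $\partial B_{N/2}$ would decay like $N^{-ch^2}$, contradicting \eqref{eq:mdecay+}, where the total cost is only $e^{-ch^2}(\log N)^{-1/2}$. Moreover, as you note yourself, Proposition~\ref{prop:0tok} does not apply to the non-Dirichlet conditional field at each scale.

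The final weighted observable is only asserted: the comparability of its quadratic variation with the weighted harmonic measure is left unverified. With weight $2^j$ on $\partial B_{N-2^j}$, most of the mass sits on bulk layers (for $2^j$ close to $N$, on layers near the origin). So neither an initial offset of order $x/\sqrt{\log N}$ nor a uniform drift--variance comparison is to be expected.

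As written, the proposal therefore contains no complete argument. The fix is to return to your first approach with the correct quadratic-variation constant and the $\log N$ lower bound on $H_N(\partial_i B_N,\wt{\mathcal I}_\tau)$.
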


\begin{proof}    Let $(\wt{\mathcal I}_t)_{t\ge0}$ and $\mathbb P^x$ be the same as in the proof of Proposition~\ref{prop:0tok}. Let $V=\partial_i B_N$, and let $M$ be the exploration martingale $M=M_V$. In contrast to the proof of Proposition \ref{prop:0tok}, we adopt the non-normalized observable and its corresponding martingale here to simplify computation. Let $\tau=\inf\{t\ge 0:\widetilde{\mathcal I}_t\cap\partial B_N\neq\emptyset\}$.
    
    By representing the quadratic variance of $M$ in terms of Green's functions (see, e.g., the proof of \cite[Lemma 20]{DW20}), there exist $c_4,c_4'>0$ such that for all $0\le s< t\le\tau$,
    \begin{equation}\label{eq:var_har2}
        c_4(H_N(V,\widetilde{\mathcal I}_t)-H_N(V,\widetilde{\mathcal I}_s))\le\langle M\rangle_t-\langle M\rangle_s\le c_4'(H_N(V,\widetilde{\mathcal I}_t)-H_N(V,\widetilde{\mathcal I}_s)).
    \end{equation}
    where $H_N(\cdot)$ was defined in \eqref{eq:intro_HN}.
    Note that by Lemma \ref{lem:hitting},
    \begin{equation}\label{eq:beg_har}
        H_N(V,\widetilde{\mathcal I}_0)\le \frac{c_5}{\log N}
    \end{equation}
    for some $c_5>0$.
    On the event $\{\tau<\infty\}=\{0\stackrel{\mphi_N\ge h}{\longleftrightarrow}\partial B_N\}$, by first applying \eqref{eq:var_har2} then modifying the proof of \cite[(3.3)]{ding2022crossing} to bound $H_N(V,\widetilde{\mathcal I}_\tau)$ from below, we have
    \begin{equation}\label{eq:end_har}
        \langle M\rangle_\tau\ge c_4(H_N(V,\widetilde{\mathcal I}_\tau)-H_N(V,\widetilde{\mathcal I}_0))\ge c_6\log N
    \end{equation}
    for some $c_6>0$.
    By the definition of the exploration process,
    $$M_t-M_0\ge h(H_N(V,\widetilde{\mathcal I}_t)-H_N(V,\widetilde{\mathcal I}_0))-x\sqrt{\log N}\cdot H_N(V,\widetilde{\mathcal I}_0)$$
     for all $0\le t\leq\tau$. 
    Combining the last inequality with (\ref{eq:var_har2}) and (\ref{eq:beg_har}) gives
    $$M_t-M_0\ge (c_4')^{-1}h\langle M\rangle_t-\frac{c_5x}{\sqrt{\log N}}$$
    for all $0\le t\le\tau$.
    Thus, by (\ref{eq:end_har}),
    \begin{align*}
        \mathbb P^x[\tau<\infty]\le\;&\mathbb P^x\left[M_t-M_0\ge (c_4')^{-1}h\langle M\rangle_t-\frac{c_5x}{\sqrt{\log N}},\forall t\in[0,\tau]\right]\\\le\;&\mathbb P\left[B_t\ge (c_4')^{-1}ht-\frac{c_5x}{\sqrt{\log N}},\forall t\in[0,c_6\log N]\right],
    \end{align*}
    where in the second inequality we used Lemma \ref{lem:dubins}.
    The claim \eqref{eq:mkdecay-boundary} then follows from another application of Lemma \ref{lem:compute} and standard asymptotic analysis.
\end{proof}

\section{The Discrete Case}\label{sec:discrete}
In this section, we focus on the discrete GFF and give the proof of Theorems~\ref{thm:discrete} and \ref{thm:comparison} in Sections~\ref{subsec:discrete-one-arm} and~\ref{subsec:compare}, respectively.

\subsection{Discrete one-arm probability}
\label{subsec:discrete-one-arm}
In this subsection, we prove Theorem \ref{thm:discrete}, which in spirit resembles that of Proposition \ref{thm:metric_graph_bulk} except for an additional issue--in the discrete setup, the counterpart for the key estimate \eqref{eq:terminal_condition} does not exactly hold true. More precisely, in contrast to the metric-gragh level set $\wt E_N^{\ge h}$, when the exploration of $E_N^{\ge h}$ ends before reaching the target $\partial B_{N/2}$, the boundary values of the explored cluster are no longer exactly $h$, but almost surely smaller than $h$. Fortunately, leveraging the correlation of GFF, we are able to show that for all large $h$, the harmonic average of those boundary values is larger than $h-c$ for some constant $c$ with probability sufficiently close to $1$, so that a weaker version of \eqref{eq:terminal_condition} still holds (see Proposition~\ref{prop:cH_bound} for details).

Throughout this subsection, we set $U=\partial B_{\lfloor\frac34N\rfloor}$, and let $(\mathcal I_t)_{t\ge0}$ be the exploration process for $E_N^{\ge h}$ with source $\mathcal I_0=\{0\}$. Take the exploration martingale $\overline M=\overline M_U$. 

We first introduce a technical lemma on the entropic repulsion of 2D GFF, which will be needed both for this subsection (but only the special case when $x\in\Z$, $\delta=1$ and $V=\{x\}$) and the proof of Theorem \ref{thm:comparison}. 
\begin{lemma}\label{lem:pin}
     Let $h\in\mathbb{R}$, $x\in\wt B_{\frac34N}$, $y\in\mZ$ that satisfies $|y-x|=\delta\in(0,1]$, and $V$ be any subset of $\mB_{N}$ containing $x$. Define the event
     \begin{equation}\label{eq:D}
         \mathcal D:=\{\mphi_{N,z}\ge h,\forall z\in V\}\cap\{\mphi_{N,z}<h,\forall z\in B_N\setminus V, |z-x|\ge1 \}.
     \end{equation}
    Then, there exists $c_1>0$ such that for all large $N$,
    \begin{equation}\label{eq:meanpin}
        \mathbb E[\mphi_{N,y}\mid\mathcal D]\ge h-c_1\delta^{\frac12},\quad \forall h\le\sqrt{\log N};
    \end{equation}
    \begin{equation}\label{eq:meanpin+}
        \mathbb E[\mphi_{N,y}\mid\mathcal D]\ge\frac12 h,\quad \forall h>\sqrt{\log N}.
    \end{equation}
\end{lemma}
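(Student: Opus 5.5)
We plan to prove the lemma by conditioning on the values of $\mphi_N$ at the lattice points next to $y$ and then using the Markov property on the edge containing $y$.

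\emph{Step 1: reduction to an edge.} Write $y$ as lying on an edge $[u,v]$ of $\mZ$, with $|x-y|=\delta\le 1$. Conditionally on $\mathcal F_{B_{N+1}}$, that is on all vertex values, the field on the edge $[u,v]$ is a Brownian bridge, of variance parameter $2$ per unit length, between $\mphi_{N,u}$ and $\mphi_{N,v}$. It is independent of everything else. In particular $\mathcal D$ is measurable with respect to the vertex values together with the edges touching $x$ when $x\notin\Z$. In the discrete special case $x\in\Z$ with $V=\{x\}$, the event $\mathcal D$ is vertex-measurable. So $\mathbb E[\mphi_{N,y}\mid\mathcal D]$ is a convex combination of $\mathbb E[\mphi_{N,u}\mid\mathcal D]$ and $\mathbb E[\mphi_{N,v}\mid\mathcal D]$, plus a bridge term when $x$ lies on the same edge. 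It therefore suffices to lower bound the conditional means at the vertices within distance $1$ of $x$.

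\emph{Step 2: the vertices in $V$.} For $z\in V\cap\Z$ the event forces $\mphi_{N,z}\ge h$, so $\mathbb E[\mphi_{N,z}\mid\mathcal D]\ge h$ trivially. The only issue is a neighbour $w$ with $|w-x|<1$ that is not forced by $\mathcal D$, or $y$ lying off $V$ near $x$.

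\emph{Step 3: unconstrained points near $x$.} In this case $\mphi_{N,y}$, conditioned on the endpoint values and on $\mphi_{N,x}\ge h$, is a Brownian bridge or Brownian motion started from a value $\ge h$ at $x$ and run for length $\delta$. Under $\mathcal D$ there is a one-sided conditioning (values $<h$ at farther points), which pushes the field downwards. To control this we use the FKG/Brascamp--Lieb monotonicity, exactly as in the proof of Lemma~\ref{lem:moment}. Conditioning on $\{\mphi\ge h \text{ on } V\}$ and $\{\mphi<h \text{ elsewhere}\}$ only stochastically lowers the field relative to conditioning on the exact values. The conditional law of $\mphi_{N,y}-\mphi_{N,x}$ given everything at distance $\ge1$ from $x$ is Gaussian with variance $\lesssim\delta$ and a mean that is a harmonic interpolation. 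The harmonic interpolation weight on points outside $V$ is $O(\delta)$, and those points are themselves $\ge$ roughly $h-O(1)$ in conditional mean; this last fact is the entropic-repulsion bound of \cite[Lemma 4.3]{ding2022crossing}, which we invoke. Together these give $\mathbb E[\mphi_{N,y}\mid\mathcal D]\ge h-c\,\mathbb E[\sup_{s\le\delta}|W_s|]\ge h-c_1\delta^{1/2}$, where $W$ is a Brownian motion and the supremum term accounts for the worst case where the path drops by its natural fluctuation.

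\emph{Step 4: large $h$.} For $h>\sqrt{\log N}$ the entropic-repulsion constant degrades, because the unconditioned variance $\asymp\log N$ is small compared with $h^2$. Here we instead compare with the mean $\mathbb E[\mphi_{N,y}\mid\mphi_{N,x}=s]$, which is roughly $s\,\wt G_N(x,y)/\wt G_N(x,x)$, with ratio $1-O(\delta/\log N)$. The downward drag is then at most $O(\sqrt{\log N})\le h/2$, which yields \eqref{eq:meanpin+}.

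The main obstacle is Step 3: quantifying that the downward conditioning at points where $\mphi<h$ does not drag $\mphi_{N,y}$ below $h$ by more than $O(\sqrt\delta)$, uniformly in the geometry of $V$. We would first try to import \cite[Lemma 4.3]{ding2022crossing} directly, with Brascamp--Lieb supplying the monotone coupling.
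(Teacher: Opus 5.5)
There is a genuine gap, and it sits exactly where you place the main obstacle. Your Step 3 decomposition is sound as far as it goes. $\mathcal D$ is measurable with respect to the field on $V$ and on the lattice sites $z\notin V$ with $|z-x|\ge1$, so $\mathbb E[\mphi_{N,y}\mid\mathcal D]$ is a harmonic average of conditional means at these points, and the weight off $V$ sits on constrained lattice sites within distance $2$ of $x$. But the input you then need is $\mathbb E[\mphi_{N,w}\mid\mathcal D]\ge h-O(1)$ for such a site $w$, on which $\{\mphi_{N,w}<h\}$ is imposed. That is the unit-distance case of the very lemma you are proving, and it is the whole difficulty. The event $\{\mphi_{N,z}<h,\ \forall z\in B_N\setminus V,\ |z-x|\ge1\}$ is a hard-wall conditioning. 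For instance with $V=\{x\}$, entropic repulsion drives the bulk of the field far below $h$, so your claim cannot hold for all points outside $V$. One must show that near $x$ the field nevertheless stays only $O(1)$ below $h$.

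Citing \cite[Lemma 4.3]{ding2022crossing} does not supply this. In this paper that lemma is used only as a Brascamp--Lieb bound on conditional fluctuations (see Lemma~\ref{lem:moment}), which says nothing about the mean shift caused by a one-sided conditioning. For the same reason, the $\mathbb E[\sup_{s\le\delta}|W_s|]$ term in Step 3 and the ``drag of order $\sqrt{\log N}$'' in Step 4 are asserted rather than derived. Two smaller points. Pinning $\mphi=h$ on $V$ lowers the field relative to conditioning on $\mphi\ge h$ there; that is the direction you need, not the one you state. Also, Step 1's measurability claim fails for general $V\subset\wt B_N$, which in the proof of Theorem~\ref{thm:comparison} is a metric-graph cluster.

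The missing idea is a comparison with a logarithmic funnel that makes the hard-wall event typical. After pinning $\mphi=h$ on $V$ via \cite[Lemma 4.2]{ding2022crossing}, the paper lowers the boundary data further to $f=h-\kappa g_x$. Here $g_x\ge0$ is harmonic off $x$, vanishes at $x$, and stays within $O(1)$ of $\log(|z-x|+2)$. Then $f\le h$ on $V$, and $f<0$ on the boundary of the box (for $h\le\sqrt{\log N}$ and $N$ large). By monotonicity, $\mathbb E[\mphi_{N,y}\mid\mathcal D]\ge\mathbb E[\phi_y\mid\mathcal D']$, where $\phi$ is the GFF with these boundary values and $\mathcal D'$ is the same hard-wall event for $\phi$.

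The gap $h-\mathbb E\phi_z=\kappa g_x(z)$ grows like $\kappa\log(|z-x|+2)$, while ${\rm Var}(\phi_z)$ grows at most like $\log(|z-x|+2)$. So a union bound gives $\P[\mathcal D']>\frac12$ for a fixed large $\kappa$. By Cauchy--Schwarz, an event of probability at least $\frac12$ moves the mean of $\phi_y$ by at most $\sqrt{2{\rm Var}(\phi_y)}\lesssim\delta^{1/2}$. Finally $\mathbb E\phi_y=f(y)\ge h-O(\delta)$ by the Lipschitz property of $g_x$, which gives \eqref{eq:meanpin}; this is where the $\delta^{1/2}$ comes from. For $h>\sqrt{\log N}$ the same argument with $\kappa=ch$, taking $c$ small so that $f(y)>\frac34h$, gives \eqref{eq:meanpin+}. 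Your proposal has no counterpart to this step, so as written it does not control the entropic repulsion.
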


\begin{proof}
            
    By \cite[Lemma 4.2]{ding2022crossing}, $\mathbb E[\mphi_{N,y}\mid\mathcal D]$ is greater than the one given $\mphi_{N,z}= h$ for all $z\in V$ and the same condition at other sites. Therefore, we will simply replace $\mathcal D$ in \eqref{eq:D} by the following one
    \[
    \mathcal D:=\{\mphi_{N,z} = h,\forall z\in V\}\cap\{\mphi_{N,z}<h,\forall z\in B_N\setminus V, |z-x|\ge1 \},
    \]
    and prove \eqref{eq:meanpin} and \eqref{eq:meanpin+} in this case.

    Let $x\in\wt B_{\frac34N}$, $y\in\mZ$, and $\delta=|y-x|\in(0,1]$ be as in the statement.
    We may assume $y\notin V$, otherwise the estimates trivially hold.   We focus on the case $h\le\sqrt{\log N}$ first and give a detailed proof for \eqref{eq:meanpin}.
    
    First, there is some $c>0$ such that there exists a non-negative continuous function $g_x$ satisfying $g_x(x)=0$, $g_x$ is harmonic on $\mZ\backslash\{x\}$ and \begin{equation}\label{eq:hf}
        \sup_{z\in\mZ}\big|g_x(z)-\log(|z-x|+2)\big|<c.
    \end{equation}
    Let $\kappa>0$ be a parameter to be determined later. Let $f$ be the function defined by 
    (depending on $x$)
\begin{equation}\label{eq:kappadef}
    f(z)=h-\kappa g_x(z).    
    \end{equation}
    Then $f$ is harmonic on $\wt B_{N+1}\backslash\{x\}$, and $f(z)\le h$ for all $z\in V$.
    Moreover, since $h\le\sqrt{\log N}$, by \eqref{eq:hf} and the fact that $x\in\wt B_{\frac34N}$, we can take $N$ large enough such that for all $z\in\partial_i B_N$, 
    \begin{equation}\label{eq:constraint}
        f(z)\le\sqrt{\log N}-\kappa\left(\log (N/4+2)-c\right)<0.
    \end{equation}
    
    Let $\phi$ be another GFF on $\wt B_{N+1}$ with the following conditions $$\phi_z=f(z),\ \forall z\in\partial_i B_{N}\cup V,$$
        and let $\mathcal D'$ be the event
    $$\mathcal D'=\{\phi_z<h,\forall z\in  B_N\setminus V                                                    ,|z-x|\ge1\}.$$
    Then, by \cite[Lemma 4.2]{ding2022crossing},  $\mphi_N$ conditioned on $\mathcal D$ is stochastically larger than $\phi$ conditioned on $\mathcal D'$.
    In particular, for these two fields at the specific point $y$, we have
    \begin{equation}\label{eq:dominate}
        \mathbb E[\mphi_{N,y}|\mathcal D]\ge\mathbb E[\phi_y|\mathcal D'].
    \end{equation}

    From above, it remains to get a corresponding lower bound for $\mathbb E[\phi_y|\mathcal D']$. 
    First, by a union bound and standard Gaussian estimates, we can choose a large constant $\kappa$ (independent of $\delta$ and $N$) in \eqref{eq:kappadef} such that for all large $N$, $\mathbb P[\mathcal D']>\frac12$. Therefore,
    $$(\mathbb E[\phi_y|\mathcal D']-\mathbb E[\phi_y])^2\le 2(\mathbb E[\phi_y|\mathcal D']-\mathbb E[\phi_y])^2\mathbb P[\mathcal D']\le 2{\rm Var}(\phi_y)\lesssim\delta,$$ 
    which yields
    \begin{equation}\label{eq:dif}
        \mathbb E[\phi_y|\mathcal D']\ge \mathbb E[\phi_y]-c\delta^{\frac12}
    \end{equation}
    for some $c>0$.
    Moreover, by the facts that $g_x$ is locally Lipschitz continuous and that $g_x(x)=0$, we have 
    \begin{equation}\label{eq:mean1}
        \mathbb E[\phi_y]=f(y)\ge h-c'\delta
    \end{equation}
    for some $c'=c'(\kappa)>0$.
    Combining \eqref{eq:dif} and \eqref{eq:mean1}, we obtain
    $$\mathbb E[\phi_y|\mathcal D']\ge \mathbb E[\phi_y]-c\delta^{\frac12}\ge h-c_1\delta^{\frac12}$$
    for some $c_1=c_1(\kappa)>0$, which further implies \eqref{eq:meanpin} by \eqref{eq:dominate}. 
    
    To obtain \eqref{eq:meanpin+}, we take $\kappa=ch$ instead, where the constant $c>0$ is chosen sufficiently small such that $f(y)>\frac34h$, and then we can proceed as above. We omit the details.\end{proof}
 
Now we are ready to deduce the substitute for \eqref{eq:terminal_condition}. This requires the use of Lemma~\ref{lem:pin} and Lemma \ref{lem:moment} to control the mean and the fluctuation of the exploration martingale, respectively. 
For integers $1\le k\le N$, we introduce the stopping times $\tau_k$ below
\begin{equation}\label{eq:intro_tauk}
    \tau_k:=\inf\{t\ge 0:\mathcal I_t\cap\partial B_{k}\neq\emptyset\}.
\end{equation}
We further record two special times which will be used frequently below
\[
\sigma:=\tau_{\lfloor N/2\rfloor} \text{ and } \sigma_1:=\tau_{\lfloor\sqrt N\rfloor}.
\] 
For any $x>0$, write as before
    \begin{equation}\label{eq:Pxdef}
    \mathbb P^x:=\mathbb P[\;\cdot\mid\varphi_{N,0}=h+x\sqrt{\log N}],
    \end{equation}
    and denote the corresponding expectation by $\mathbb E^x$.
We have the following estimate.
\begin{proposition}\label{prop:cH_bound}
    For any $h\in\mathbb R$, let $\hat h=\frac h4(1+3\mathbbm1_{h\le\sqrt{\log N}})$. Define the event
    \begin{equation}\label{eq:HNh}
        \mathcal H_N^h:=\{\exists t\in[\sigma_1,\sigma) \mbox{ such that }\overline M_t<(\hat h-3c_1)\,\overline H_N(U,\mathcal I_t)\},
    \end{equation}
    where $c_1$ is from Lemma \ref{lem:pin}. Then, there is some $c>0$ such that for all $x>0$,
    \begin{equation}\label{eq:cH_bound}
        \mathbb P^x[\mathcal H_N^h]\lesssim\begin{cases} N^{-1},&h\le\sqrt{\log N};\\e^{-ch^2},&h>\sqrt{\log N} .\end{cases}
    \end{equation}
\end{proposition}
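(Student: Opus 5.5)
The plan is to control $\overline M_t$ at each time $t\in[\sigma_1,\sigma)$ by splitting it into a conditional mean and a Gaussian-type fluctuation, and then to take a union bound over the discrete exploration steps. At an integer time $k$ the explored set $\mathcal I_k$ splits into $I^+=\mathcal A$-type vertices (where $\varphi_N\ge h$) and $I^-=\mathcal B$-type boundary vertices (where $\varphi_N<h$). Each vertex of $I^-$ is adjacent to a vertex of $I^+$, which is exactly the structure required in Lemma~\ref{lem:moment}. By the Markov property,
\[
\overline M_k=\sum_{x\in D(\mathcal I_k)}\overline H_N(U,x;\mathcal I_k)\varphi_{N,x}=Y,
\]
since only the harmonic support contributes. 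So I will write $Y=\mathbb E[Y\mid A,\mathcal F_{I^+}]+(Y-\mathbb E[Y\mid A,\mathcal F_{I^+}])$. Non-integer times are handled by linear interpolation: $\overline H_N(U,\mathcal I_t)$ is monotone and the martingale varies only between consecutive steps, so controlling integer times plus a crude one-step bound suffices. The number of relevant steps is at most $|B_N|\lesssim N^2$.

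\emph{Mean.} For each $x\in D(\mathcal I_k)\cap I^-$ I apply Lemma~\ref{lem:pin} in the special case where $x$ is the lattice point and $\delta=1$, taking $V$ to be the adjacent vertex in $I^+$ together with its cluster. Using the monotonicity from \cite[Lemma 4.2]{ding2022crossing}, this gives conditional mean at least $h-c_1$ when $h\le\sqrt{\log N}$, and at least $h/2$ otherwise. Vertices in $I^+$ contribute at least $h$. Summing against the harmonic weights yields
\[
\mathbb E[Y\mid A,\mathcal F_{I^+}]\ge (\hat h'-c_1)\,\overline H_N(U,\mathcal I_k),
\]
where $\hat h'\ge\hat h$; the factor $\hat h=h/4$ for large $h$ leaves room.

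\emph{Fluctuation.} By Lemma~\ref{lem:moment}, the $2k$-th conditional central moment of $Y$ is bounded by $(2k-1)!!\,16^k\xi^k\,\overline H_N(U,\mathcal I_k)^{2k}$. Markov's inequality then gives
\[
\mathbb P\bigl[\,Y-\mathbb E[Y\mid\cdot]<-2c_1\overline H_N(U,\mathcal I_k)\,\bigr]\lesssim \frac{(2k-1)!!\,16^k\xi^k}{(2c_1)^{2k}} .
\]
The key geometric input is that after $\sigma_1$ the explored set is connected with diameter at least $\sqrt N$, so a Beurling estimate (Lemma~\ref{lem:beurling}) bounds the harmonic measure from $U$ of any single point relative to $H_N(U,\mathcal I)$. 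This should give $\xi(U,\mathcal I_k)\lesssim N^{-1/4}$, perhaps up to log factors, in the spirit of \cite[(3.3)]{ding2022crossing}. Choosing $k$ a large fixed integer (say $k=20$) makes each step's probability at most $N^{-5/2}\cdot N^{-c}$. A union bound over the $\lesssim N^2$ steps then gives $N^{-1}$ in the case $h\le\sqrt{\log N}$.

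For $h>\sqrt{\log N}$, the threshold is $\hat h-3c_1=h/4-3c_1$ while the mean is at least $h/2$. This leaves a gap of order $h\,\overline H_N$, and the Gaussian tail (taking $k\asymp h^2$, or using directly the conditional Gaussian bound combined with Brascamp–Lieb) gives a probability like $e^{-ch^2/\xi}\le e^{-ch^2}N^{-10}$ per step. The union bound then still yields $e^{-ch^2}$, since $N^2e^{-c h^2 N^{1/4}}$ is tiny.

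The main obstacle is the fluctuation step. First, Lemma~\ref{lem:moment} must be applied at random, exploration-dependent times: I would condition on the realized configuration $(I^+,I^-)$ and use that the event $\{\mathcal I_k=I\}$ equals the event $A$ intersected with an $\mathcal F_{I^+}$-measurable event. Second, the bound on $\xi$ requires care near the source, which is why the statement starts at $\sigma_1$; the Beurling estimate is where that restriction is used. The conditioning $\mathbb P^x$ on $\varphi_{N,0}$ is harmless, since $0\in I^+$ is part of $\mathcal F_{I^+}$.
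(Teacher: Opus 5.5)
\paragraph{Integer times.} Your treatment of integer times matches the paper's proof. You condition on $\beta_I=\omega$, which is exactly the event $A$ of Lemma~\ref{lem:moment}. You bound the conditional mean below via Lemma~\ref{lem:pin} with $\delta=1$; taking $V\supset I^+$ is as admissible as the paper's $V=\{z\}$. You control the fluctuation by Lemma~\ref{lem:moment} and get $\xi(U,I)\lesssim N^{-a}$ from Beurling. Using a fixed high moment instead of the paper's exponential moment of $\zeta_k^2$ is a harmless variation.

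One imprecision: $\xi$ concerns the harmonic measure of a point $x\in I':=I\setminus D(I)$ relative to $I'$, not relative to $\mathcal I_k$ (relative to $\mathcal I_k$ it is zero by definition of $D(I)$). The paper handles this via $H_N(y,x;I')\lesssim\log N\cdot P^x\big[\tau_{I'\setminus\{x\}}>\tau_{x+\partial\widetilde B_{\sqrt N/4}}\big]$ and Beurling.

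\paragraph{The gap: non-integer times.} Only the explored set $\mathcal I_t$ is linearly interpolated. The process $\overline M_t=\mathbb E[\overline X_U\mid\mathcal F_{\mathcal I_t}]$ is a continuous martingale that keeps fluctuating inside each step $[k,k+1]$, as the metric-graph field along partially explored edges is revealed. So $\overline M_k$ and $\overline M_{k+1}$ lying above the threshold says nothing about $\inf_{t\in[k,k+1]}\overline M_t$, while $\mathcal H_N^h$ is defined over all real $t\in[\sigma_1,\sigma)$.

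The ``crude one-step bound'' you invoke is not crude. With a union bound over $\asymp N^2$ steps, a within-step dip of size $c_1\overline H_N(U,\mathcal I_k)\asymp(\log N)^{-1}$ must have probability $\lesssim N^{-3}$. By Lemma~\ref{lem:dubins}, this essentially forces the one-step quadratic variation to be $O((\log N)^{-3})$. The trivial bound $O(1)$ is useless.

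The paper supplies this as the separate event $F_k$, using the analogue \eqref{eq:har_var2} of \eqref{eq:har_var} and a second application of Beurling:
\[
\langle\overline M\rangle_{k+1}-\langle\overline M\rangle_k\lesssim\overline H_N(U,\mathcal I_{k+1})-\overline H_N(U,\mathcal I_k)\le\sup_{x\in\mathcal I_{k+1}\setminus\mathcal I_k}H_N(x,\partial B_N;\partial B_N\cup\mathcal I_k)\lesssim N^{-a}.
\]
Beurling applies because each newly explored point is adjacent to the connected set $\mathcal I_k$, which has diameter $\ge\sqrt N$ after $\sigma_1$. A Brownian motion run for time $c_4N^{-a}$ then drops below $-c_1c_5(\log N)^{-1}$ (resp.\ $-(\frac h8+c_1)c_5(\log N)^{-1}$) with probability $\lesssim N^{-3}$ (resp.\ $e^{-ch^2}$).

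You need to add this step, splitting the margin ($3c_1$, resp.\ the gap between $\frac h2$ and $\frac h4$) between the integer-time event and the within-step event. The same bound on $\overline H_N(U,\mathcal I_{k+1})-\overline H_N(U,\mathcal I_k)$ also controls the upward drift of the threshold $(\hat h-3c_1)\overline H_N(U,\mathcal I_t)$ within a step when $\hat h-3c_1>0$. Monotonicity of $\overline H_N(U,\mathcal I_t)$ alone points the wrong way there.
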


As mentioned in the beginning of this subsection, this proposition will play the role of \eqref{eq:terminal_condition} in the discrete case, controlling the fluctuation of the exploration martingale around $h$ during the time interval $[\sigma_1,\sigma)$. 
Note that the lack of control over $[0,\sigma_1)$ does not pose any issue, since $\langle \overline M\rangle_{\sigma_1}\lesssim(\log N)^{-1}$ is sufficiently small (see the proof of Proposition~\ref{prop:thm1.3+} below).

\begin{proof}[Proof of Proposition~\ref{prop:cH_bound}]
To obtain \eqref{eq:cH_bound}, we control the exploration martingale $(\overline M_t)_{t\in[\sigma_1,\sigma)}$ along the integer times and times between them separately. More precisely, for an integer time $k$, consider the following two types of events
\begin{equation}\label{eq:intro_Ek}
        E_k:=\left\{\overline M_k<(h-2c_1-\frac58h\mathbbm1_{h>\sqrt{\log N}})\overline H_N(U,\mathcal I_k)\right\},
    \end{equation}
and
\begin{equation}\label{eq:intro_Fk}
        F_k:=\left\{\inf_{t\in[k,k+1]}\overline M_t-\overline M_k<-(c_1+{\frac{1}{8}h\mathbbm1_{h>\sqrt{\log N}}})\cdot \overline H_N(U,\mathcal I_k)\right\}.
\end{equation}
Then, $\mathcal H_N^h$ implies that $E_k\cup F_k$ occurs for some integer $k$ in $[\sigma_1,\sigma)$. 
Apparently, $\overline M_j=\overline M_k$ for all $j>k$ if $\mathcal I_k=\mathcal I_{k-1}$, and $\mathcal I_k\supsetneq \mathcal I_{k-1}$ if $\mathcal I_k\neq \mathcal I_{k-1}$. In other words, the exploration process is ``strictly increasing'' at each step before eventually being stopped. Therefore, there are at most $O(N^2)$ terms to be considered when getting \eqref{eq:cH_bound} by a union bound. In other words, it suffices to show that there exists $c>0$ such that for all $x>0$ and $k\ge1$, 
\begin{equation}\label{eq:cH_bound1}
        \mathbb P^x\left[\{\sigma_1\le k< \sigma\}\cap \left(E_k\cup F_k\right)\right]\lesssim\begin{cases} 
            N^{-3},&h\le\sqrt{\log N};\\
            e^{-ch^2}, &h>\sqrt{\log N}.
            \end{cases}
\end{equation}

    In the remainder of the proof, we fix an integer $k$ and control $E_k$ and $F_k$, separately. 
    We first deal with $E_k$ by conditioning on any possible realization of $\mathcal I_k$.
    Let $I$ be any connected set of vertices such that $0\in I\subset B_{N/2}$. Let $\wt I$ be the metric graph associated with $I$. Assume that $\mathbb P^x[\{\sigma_1\le k< \sigma\}\cap \{\mathcal I_k=\wt I\}]>0$ (note that $\mathcal I_k$ is generally a metric graph as we defined via linear interpolation). It implies that $0$ and $\partial B_{\sqrt N}$ are connected in $I$. 
            Define the random vector $\beta_I$ by $$\beta_I:=( 1_{\{\varphi_{N,x}\ge h\}})_{x\in I}.$$
    The events $\{\mathcal I_k=\wt I\}$ and $\{\sigma_1\le k<\sigma\}$ are both measurable with respect to $\beta_I$. Let $\omega\in\{0,1\}^I$ be some realization of $\beta_I$ such that $\{\beta_I=\omega\}\subset\{\mathcal I_k=\wt I\}$, and define $I^+=\{x\in I:\omega_x=1\}$, $I^-=\{x\in I:\omega_x=0\}$. Note that $0\in I^+$ since $x>0$. 
    
    Now, we estimate the expectation of $\overline M_k$ given $\beta_I$ and $\mathcal F_{I^+}$. 
    Recalling the definition of $D(I)$ in \eqref{eq:intro_D}, we obtain that
        \begin{align} \label{eq:mean} 
        &\;\mathbb E[\overline M_k\mid\beta_I=w,\mathcal F_{I^+}]=\sum_{x\in D(I)}\overline H_N(U,x;I)\mathbb E[\varphi_{N,x}\mid\beta_I=w,\mathcal F_{I^+}]\\ \notag
        \ge&\;\sum_{x\in D(I)\cap I^+}\overline H_N(U,x;I)\cdot h\\ \notag&\; +\sum_{x\in D(I)\cap I^-}\overline H_N(U,x;I)\inf_{z\sim x}\mathbb E[\varphi_{N,x}\mid\varphi_{N,z}\ge h,\varphi_{N,y}<h,\forall y\in B_N,|y-z|\ge1]\\ \notag
        \ge&\;(h-\frac12h\mathbbm1_{h>\sqrt{\log N}}-c_1)\overline H_N(U,I), 
    \end{align}
    where we used 
        Lemma~\ref{lem:pin} with $V=\{z\}$ and $\delta=1$ in the last inequality.
    
    Next, we derive an exponential moment bound for $\overline M_k$ (appropriately recentered and rescaled). Note that $\{\mathcal I_k=\wt I\}$ is non-empty, which indicates that for all $x\in I^-$, there exists some $x'\in I^+$ such that $x\sim x'$. Thus, by Lemma \ref{lem:moment}, for all $r\in\mathbb N$,
    \begin{equation}\label{eq:varM}
        \mu^{(2r)}[\overline M_k|\beta_I=w,\mathcal F_{I^+}]\le(2r-1)!!\cdot 16^r\xi(U,I)^r\overline H_N(U,I)^{2r},
    \end{equation}
    where $\mu^{(2r)}$ stands for the conditional $(2r)$-th moment; see \eqref{eq:intro_muk}. 
    Letting
    $$\zeta_k:=\frac{\overline M_k-\mathbb E[\overline M_k|\beta_I,\mathcal F_{I^+}]}{\xi(U,I)^{\frac12}\overline H_N(U,I)},$$
    it follows from \eqref{eq:varM} that for any $0<b<\frac1{32}$,
    \begin{equation}\label{eq:exp_moment}
        \begin{aligned}
        \mathbb E[e^{b\zeta_k^2}|\beta_I=w,\mathcal F_{I^+}]=&\ \sum_{r=0}^\infty\frac{b^r}{r!\xi(U,I)^r\overline H_N(U,I)^{2r}}\mu^{(2r)}[\overline M_k|\beta_I=w,\mathcal F_{I^+}]\\
        \le&\ \sum_{r=0}^\infty\frac{(16b)^r}{r!}(2r-1)!!\le\sum_{r=0}^\infty(32b)^r<\infty.
        \end{aligned}
    \end{equation}
    Furthermore, we need to give an upper bound for $\xi(U,I)$ (recall its definition in \eqref{eq:xiI}). Write $I'=I\setminus D(I)$. We have for all $y\in U$ and $x\in I'$, 
    $$\begin{aligned}
        H_N(y,x;I')\le&\ H_N(y,x;I')G_{(\wt B_{N+1}\setminus I')\cup\{x\}}(x,x)=G_{(\wt B_{N+1}\setminus I')\cup\{x\}}(y,x)\\=&\ H_N(x,y;(I'\setminus\{x\})\cup\{y\})G_{(\wt B_{N+1}\setminus I')\cup\{x\}}(y,y).
    \end{aligned}$$
    Since $G_{(\wt B_{N+1}\setminus I')\cup\{x\}}(y,y)\le G_{\wt B_{N+1}}(y,y)\lesssim\log N$, and by Lemma~\ref{lem:beurling} (note that $I'$ has diameter at least $\frac12\sqrt N$), for some $a>0$,
            $$H_N(x,y;(I'\setminus\{x\})\cup\{y\})\le P^x\big[\tau_{I'\setminus\{x\}}>\tau_{x+\partial \wt B_{\sqrt N/4}}\big]\lesssim N^{-4a}.$$
    Hence, we have $$\overline H_N(U,x;I')\lesssim \log N\cdot N^{-4a}\lesssim N^{-2a}.$$  Moreover, by Lemma \ref{lem:hitting}, $\overline H_N(U,I)\gtrsim(\log N)^{-1}$. Therefore, 
    \begin{equation}\label{eq:xi_bound1}
        \xi(U,I)=\sup_{x\in I'} \overline H_N(U,x;I')/ \overline H_N(U,I) \lesssim N^{-a}.
    \end{equation}
    
                    When $h\le\sqrt{\log N}$, by \eqref{eq:mean}, \eqref{eq:exp_moment} and \eqref{eq:xi_bound1}, using Chebyshev's inequality, we get
    \begin{equation}\label{eq:step-mart}
        \begin{split}
    &\mathbb P[\overline M_k<(h-2c_1)\overline H_N(U,I)|\beta_I=w,\mathcal F_{I^+}]\\
    \le\;&\mathbb P[|\zeta_k|\ge c_1\xi(U,I)^{-\frac12}]\\\le\;&\exp(-bc_1^2\xi(U,I))     \mathbb E[e^{b\zeta_k^2}|\beta_I=w,\mathcal F_{I^+}]\\
    \lesssim\;& e^{-cN^{a}}\lesssim N^{-3}.
        \end{split}
    \end{equation}
    Similarly, when $h>\sqrt{\log N}$, for some $c>0$,
    \begin{equation}\label{eq:step-mart'}
        \mathbb P[\overline M_k<(\frac38h-2c_1)\overline H_N(U,I)|\beta_I=w,\mathcal F_{I^+}] \le\mathbb P[|\zeta_k|\ge (\frac 18 h+c_1)\xi(U,I)^{-\frac12}] \lesssim e^{-ch^2}.
    \end{equation}
    Combined, we obtain the following desired upper bound for $E_k$,  
    \begin{equation}\label{eq:cH_bound2}
    \begin{aligned}
        &\mathbb P^x[\{\sigma_1\le k<\sigma\}\cap E_k]\\=\;&\mathbb E^x\left[\mathbb P^x[\overline M_k<(h-2c_1-\frac58h\mathbbm1_{h>\sqrt{\log N}}) \overline H_N(U,\mathcal I_k)\mid\mathcal I_k]1_{\{\sigma_1\le k<\sigma\}}\right]\\
        \lesssim\;&\begin{cases}
            N^{-3}, &h\le\sqrt{\log N};\\
            e^{-ch^2}, &h>\sqrt{\log N}.
            \end{cases}
    \end{aligned}
    \end{equation}

    Next, we turn to the event $F_k$.
    First, analogous to (\ref{eq:har_var}), we have the following standard estimate
    \begin{equation}\label{eq:har_var2}
        c_2(\overline H_N(U,\mathcal I_t)-\overline H_N(U,\mathcal I_s))\le \langle \overline M\rangle_t-\langle\overline M \rangle_s\le c_3(\overline H_N(U,\mathcal I_t)-\overline H_N(U,\mathcal I_s))
    \end{equation}
    for some $c_2,c_3>0$ and all $t> s\ge 0$.
    For $\sigma_1\le k<\sigma$, the above inequality and another application of Lemma \ref{lem:beurling} give
    $$\langle \overline M\rangle_{k+1}-\langle \overline M\rangle_k\lesssim \overline H_N(U,\mathcal I_{k+1})-\overline H_N(U,\mathcal I_k)\le \sup_{x\in\mathcal I_{k+1}\setminus\mathcal I_k}H_N(x,\partial B_N;\partial B_N\cup\mathcal I_k)\lesssim N^{-a},$$
    where $H_N(x,\partial B_N;\partial B_N\cup\mathcal I_k):=\sum_{y\in\partial B_N}H_N(x,y;\partial B_N\cup\mathcal I_k)$.
    That is, there exists $c_4>0$ such that
    $$\langle \overline M\rangle_{k+1}-\langle \overline M\rangle_k\le c_4N^{-a}.$$
    Also, note that by Lemma \ref{lem:hitting}, there exists $c_5>0$ such that
    $H_N(U,\mathcal I_k)\ge c_5(\log N)^{-1}$. Recalling $F_k$ from \eqref{eq:intro_Fk} and using Lemma~\ref{lem:dubins}, when $h\le\sqrt{\log N}$,
    \begin{equation}\label{eq:cH_bound3}
        \mathbb P^x[\{\sigma_1\le k<\sigma\}\cap F_k]\le\mathbb P\left[\inf_{t\in[0,c_4N^{-a}]}B_t<-c_1c_5(\log N)^{-1}\right]\lesssim N^{-3}.
    \end{equation}
    Similarly, there is some $c>0$ such that when $h>\sqrt{\log N}$,
    \begin{equation}\label{eq:cH_bound3'}
        \mathbb P^x[\{\sigma_1\le k<\sigma\}\cap F_k]\le\mathbb P\left[\inf_{t\in[0,c_4N^{-a}]}B_t<-(\frac18h+c_1)c_5(\log N)^{-1}\right]\lesssim e^{-ch^2}.
    \end{equation}
    The conclusion (\ref{eq:cH_bound1}) (and hence \eqref{eq:cH_bound}) then follows from (\ref{eq:cH_bound2}), (\ref{eq:cH_bound3}) and (\ref{eq:cH_bound3'}). This finishes the proof.
\end{proof}

To combine the proof of Theorem \ref{thm:discrete} and \eqref{eq:chemical} in the same framework, we derive the following upper bound on the probability of a slightly general version of one-arm event.
\begin{proposition}\label{prop:thm1.3+}
    There is a constant $\rho>0$ such that for any integer $k\in[N(\log N)^{-1},N/2]$ and any $h\le\sqrt{\log N}$, we have
    \begin{equation}
        \mathbb P[0\connect{\varphi_N\ge h}\partial B_k]\lesssim\widetilde \eta_{N,k}^{h,\rho},
    \end{equation}
    where
    $$\wt\eta_{N,k}^{h,\rho}:=\begin{cases}
        \frac{|h|\vee\sqrt{\log N-\log k}}{\sqrt{\log N}},&h\le 0;\\
        \sqrt{\frac{\log N-\log k}{\log N}}\exp\left(-\frac{\rho h^2}{\log N-\log k}\right), &h>0.
    \end{cases}$$
\end{proposition}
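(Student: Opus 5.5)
We will follow the proof of Proposition~\ref{prop:0tok} and Proposition~\ref{thm:metric_graph_bulk}, with Proposition~\ref{prop:cH_bound} taking the place of the exact terminal condition \eqref{eq:terminal_condition}. We condition on $\varphi_{N,0}=h+x\sqrt{\log N}$ and write the one-arm probability as $\mathbb E[\mathbb P^{\xi}[\tau_k<\infty]\mathbbm1_{\xi>0}]$ with $\xi=(\varphi_{N,0}-h)/\sqrt{\log N}$, exactly as in \eqref{eq:asymp_0tok}. The goal is then the conditional bound
\[
\mathbb P^x[\tau_k<\infty]\lesssim x\,\wt\eta_{N,k}^{h,\rho}+N^{-1}
\]
for $0<x\le c\sqrt{\log N}$. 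Integrating this against the Gaussian law of $\xi$, and using that $\mathbb P[\xi>c\sqrt{\log N}]\le N^{-a}$, which is negligible since $\wt\eta\gtrsim(\log N)^{-1/2}\cdot e^{-\rho h^2/(\log N-\log k)}$ with $h\le\sqrt{\log N}$ and $\log N-\log k\gtrsim 1$, gives the claim. One small point: Proposition~\ref{prop:cH_bound} is stated with $\sigma=\tau_{\lfloor N/2\rfloor}$, but its proof works verbatim with $\sigma$ replaced by $\tau_k$, since $k\le N/2$.

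The key steps are as follows. First, on $\{\tau_k<\infty\}\cap(\mathcal H^h_N)^c$, where $\mathcal H^h_N$ is defined with $\tau_k$, we have $\overline M_t\ge(h-3c_1)\overline H_N(U,\mathcal I_t)$ for all $t\in[\sigma_1,\tau_k)$; here $\hat h=h$ since $h\le\sqrt{\log N}$. Second, by Lemma~\ref{lem:hitting}, $\overline H_N(U,\mathcal I_{\tau_k})\ge c(\log N-\log k)^{-1}$ while $\overline H_N(U,\mathcal I_0)\asymp(\log N)^{-1}$. Together with \eqref{eq:har_var2}, this gives $\langle\overline M\rangle_{\tau_k}-\langle\overline M\rangle_{\sigma_1}\ge T:=c(\log N-\log k)^{-1}$, using that $\overline H_N(U,\mathcal I_{\sigma_1})\lesssim(\log N)^{-1}$ by Lemma~\ref{lem:hitting} with diam $\sqrt N$. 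The restriction $k\ge N/\log N$ is what ensures that $\log N-\log k\le\log\log N$ is genuinely smaller, although the bound stays valid anyway. Third, we handle the initial segment $[0,\sigma_1]$. There $\langle\overline M\rangle_{\sigma_1}\lesssim(\log N)^{-1}$, so the increment $\overline M_{\sigma_1}-\overline M_0$ is a Brownian motion evaluated at a time $\lesssim(\log N)^{-1}$. Also $\overline M_0\asymp(h+x\sqrt{\log N})/\log N$, and at $\sigma_1$ the drift term $(h-3c_1)\overline H_N(U,\mathcal I_{\sigma_1})$ differs from $h\overline H_N(U,\mathcal I_0)$ by $O((|h|+1)/\log N)$. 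We condition on $\mathcal F_{\mathcal I_{\sigma_1}}$ and set $b:=\overline M_{\sigma_1}-(h-3c_1)\overline H_N(U,\mathcal I_{\sigma_1})$. By the strong Markov property and Lemma~\ref{lem:dubins}, on $\{\sigma_1<\infty\}$ the event then reduces to a Brownian motion starting from height $b$ staying above a line of slope $\asymp h$ (from \eqref{eq:har_var2}, with constants chosen as in Proposition~\ref{prop:0tok} according to the sign of $h$) for time $T$. This has probability $\psi(m,b,T)$ from Lemma~\ref{lem:compute}. Fourth, we take expectations: $\mathbb E^x[b_+]\lesssim(x\sqrt{\log N}+|h|+1)/\log N$. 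Applying \eqref{eq:asymp-} or \eqref{eq:asymp+}, which are linear in $b$, yields
\[
\frac{x+(|h|+1)/\sqrt{\log N}}{\sqrt{\log N}}\Big(|h|\nabla\sqrt{\log N-\log k}\Big)
\]
for $h\le0$, and the analogous expression with factor $\sqrt{\log N-\log k}\,e^{-\rho h^2/(\log N-\log k)}$ for $h>0$. The extra $(|h|+1)/\sqrt{\log N}$ term is at most $1$ in magnitude, so it is dominated after integrating over $x$ with a Gaussian of constant variance (when $h<-\sqrt{\log N}$ the bound is trivial). Finally, we add $\mathbb P^x[\mathcal H^h_N]\lesssim N^{-1}$.

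The main obstacle is the third step. The conditional bound must be linear in $b$, and we must control the positive part of $b$ in expectation rather than pointwise, because $\overline M_{\sigma_1}$ is not forced above the drift line by the discrete exploration. The nonlinearity $\psi\le1$ lets us bound $\mathbb E\psi(m,b_+,T)\le\mathbb E[b_+]\sup_b\psi/b$, which suffices since \eqref{eq:asymp+} and \eqref{eq:asymp-} give $\psi\lesssim b\cdot(\text{factor})$ uniformly once $b\le\sqrt T$. Larger $b$ has Gaussian-small probability, because $\mathrm{Var}(\overline M_{\sigma_1}-\overline M_0)\lesssim(\log N)^{-1}\ll T$.
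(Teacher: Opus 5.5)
Your proposal is correct and is essentially the paper's proof. The paper also invokes Proposition~\ref{prop:cH_bound} on $[\sigma_1,\tau_k)$ and leaves the initial stretch of quadratic variation $\le c_7/\log N$ unconstrained. It handles that stretch by conditioning the Dubins--Schwarz Brownian motion at time $c_7/\log N$, which is the counterpart of your restart at $\sigma_1$ from height $b$, and then integrates the linear-in-$b$ bound from Lemma~\ref{lem:compute} over $\xi_N^h$.

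Two small bookkeeping points, both absorbed as the paper absorbs them. First, $\mathbb E^x[b_+]$ also contains $\mathbb E^x[(\overline M_{\sigma_1}-\overline M_0)_+]$, which is only $\lesssim(\log N)^{-1/2}$, so the correct bound is $(x+1)/\sqrt{\log N}+(|h|+1)/\log N$; this is harmless after integrating and matches the paper's $\zeta=\sqrt{\log N}\,B_{c_7/\log N}$. Second, for $h>0$ the regime $b>\sqrt T$ also arises from $x\gtrsim\sqrt{\log N/(\log N-\log k)}$, not just from martingale fluctuations. That regime contributes $\exp(-c\log N/(\log N-\log k))$, which is absorbed into $\wt\eta_{N,k}^{h,\rho}$ only after taking $\rho$ small.
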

\begin{proof}
    As in the proof of Proposition \ref{thm:metric_graph_bulk}, it suffices to deal with the case $h\ge-\sqrt{\log N}$. Furthermore, we may assume that $N$ is larger than any fixed constant. Indeed, arbitrarily fix $c>0$, then for any $N<c$ and $h\in\mathbb R$, there exists $c'>0$ such that we have the following estimate:
    $$\mathbb P\big[0\connect{\varphi_N\ge h}\partial B_k\big]\le\mathbb P[\varphi_{N,0}\ge h]\lesssim\begin{cases}1,&h\le0;\\e^{-c'h^2},&h>0.\end{cases}$$

    As before, the key argument is to estimate the associated exploration martingale.
    Recall $\tau_k$ from \eqref{eq:intro_tauk}. By \eqref{eq:har_var2} and Lemma \ref{lem:hitting}, there are $c_6,c_7>0$ such that on the event $\{0\connect{\varphi_N\ge h}\partial B_{k}\}$,
    \begin{equation}\label{eq:varbound}
    \begin{aligned}
        \langle\overline M\rangle_{\tau_k}\ge\;& c_2(\overline H_N(U,\mathcal I_{\tau_k})-\overline H_N(U,\mathcal I_0))\ge c_6(\log N-\log k)^{-1},\\
        \langle \overline M\rangle_{\sigma_1}\le\;& c_3\,\overline H_N(U,B_{\sqrt N})\le\frac {c_7}{\log N}.
    \end{aligned}
    \end{equation}

    Let $\mathcal H_N^h$ be the event in \eqref{eq:HNh}, and recall $\xi_N^h$ in \eqref{eq:xinhdef}.
    Using  \eqref{eq:har_var2} and Lemma \ref{lem:hitting} again,  there exists $c_8>0$ such that on the event $\{\tau_k<\infty\}\cap (\mathcal H_N^h)^c$, for all $t\in[\sigma_1,\tau_k)$,
    \begin{equation}\label{eq:barMt}
    \begin{aligned}
        \overline M_t-\overline M_0\ge&\ (h-3c_1)\overline H_N(U,\mathcal I_t)-(h+\xi_N^h\sqrt{\log N})\overline H_N(U,\mathcal I_0)\\
        =&\ (h-3c_1)(\overline H_N(U,\mathcal I_t)-\overline H_N(U,\mathcal I_0))-(3c_1+\xi_N^h\sqrt{\log N})\overline H_N(U,\mathcal I_0)\\
        \ge&\ c_8(h-3c_1)\langle \overline M\rangle_t-c_8(3c_1+\xi_N^h\sqrt{\log N})(\log N)^{-1}.
    \end{aligned}
    \end{equation}
    Considering the event
    \begin{equation}\label{eq:calA}
    \begin{aligned}
        \mathcal{A}_k:=\Big\{&\overline M_t-\overline M_0\ge c_8(h-3c_1)\langle\overline  M\rangle_t-\frac {c_8(3c_1+\xi_N^h\sqrt{\log N})}{\log N},\\
    &\forall t \text{ satisfying } \ c_7(\log N)^{-1}<\langle \overline M\rangle_t<c_6(\log N-\log k)^{-1}\Big\},
    \end{aligned}
    \end{equation}
    then \eqref{eq:varbound} and \eqref{eq:barMt} together imply the following inclusion 
    \begin{equation}\label{eq:inclusion}
    \{0\connect{\varphi_N\ge h}\partial B_{k}\} \subset \mathcal H_N^h \cup (\{\tau_k<\infty\}\cap (\mathcal H_N^h)^c) \subset \mathcal H_N^h \cup \mathcal{A}_k.
    \end{equation}
    By Proposition \ref{prop:cH_bound}, $\mathbb P^x[\mathcal H_N^h]\lesssim N^{-1}$, where $\mathbb P^x$ is defined by \eqref{eq:Pxdef}.
        It follows from \eqref{eq:inclusion} that
    \begin{equation}\label{eq:0toN/2_break}
            \mathbb P^x[0\connect{\varphi_N\ge h}\partial B_{k}]
                        \le \mathbb P^x[\mathcal H_N^h]+\mathbb P^x[\mathcal{A}_k] \lesssim N^{-1}+\mathbb P^x[\mathcal{A}_k].
    \end{equation}
    Note that $\mathbb P^x=\mathbb P[\cdot\mid \xi_N^h=x]$ by our notation.
    By Lemmas \ref{lem:compute}, \ref{lem:dubins} and asymptotic estimates \eqref{eq:asymp+}, \eqref{eq:asymp-}, there is $c>0$ such that
    $$\begin{aligned}
        \mathbb P^x[\mathcal{A}_k]=&\,\mathbb P\Big[B_t-B_{\frac{c_7}{\log N}}\ge c_8(h-3c_1)(t-\frac{c_7}{\log N})-\frac{g(x)}{\sqrt{\log N}}-B_{\frac{c_7}{\log N}},\\ &\ \ \ \ \ \forall t\in
        \Big(\frac{c_7}{\log N},\frac{c_6}{\log N-\log k}\Big)\Big]\\
        \lesssim&\,\mathbb E\left[\left(g(x)+\sqrt{\log N}\cdot B_{\frac{c_7}{\log N}}\right)\wt\eta_{N,k}^{h,c}\mathbbm1_{0\le g(x)+\sqrt{\log N}\cdot B_{\frac{c_7}{\log N}}\le\sqrt{\frac{\log N}{\log N-\log k}}}\right]\\&+\mathbb P\left[g(x)+\sqrt{\log N}\cdot B_{\frac{c_7}{\log N}}\ge\sqrt{\frac{\log N}{\log N-\log k}}\right],
    \end{aligned}$$
    where $g(x):=\frac{c_8(-c_7h+3c_1c_7+3c_1+x\sqrt{\log N})}{\sqrt{\log N}}$. 
    Let $\overline\xi$ and $\zeta$ be independent copies of $g(\xi_N^h)$         and $\sqrt{\log N}\cdot B_{\frac{c_7}{\log N}}$, respectively. Then, by \eqref{eq:0toN/2_break} and the above bound for $\mathbb P^x[\mathcal{A}_k]$,
    \begin{align*}
    \mathbb P[0\connect{\varphi_N\ge h}\partial B_k]
            &\lesssim\, N^{-1}+\mathbb P[\mathcal{A}_k]\\
            &\lesssim\, N^{-1}+\wt\eta_{N,k}^{h,c}\,\mathbb E\big[(\overline\xi+\zeta)\mathbbm1_{0\le\overline\xi+\zeta\le\sqrt{\frac{\log N}{\log N-\log k}}}\big]+\mathbb P\left[\overline\xi+\zeta\ge\sqrt{\frac{\log N}{\log N-\log k}}\right]\\
            &\lesssim\, N^{-1}+ \wt\eta_{N,k}^{h,c}+\exp\left(-\frac{c'\log N}{\log N-\log k}\right),
    \end{align*}
        where the last inequality holds since $\overline\xi+\zeta$ is Gaussian with mean and variance of constant order (note that $\mathbb E[\xi_N^h]\in[-1,1]$ since $|h|\le\sqrt{\log N}$). 
        Note that for $h\le 0$, $\eta_N^{h,c}$ is the dominating term on the right-hand side above; while for $0<h\le\sqrt{\log N}$, we may choose $\rho>0$ small enough so that 
    $$\max\left\{N^{-1},\wt\eta_{N,k}^{h,c},\exp\left(-\frac{c'\log N}{\log N-\log k}\right)\right\}\le\wt\eta_{N,k}^{h,\rho}.$$
    The proposition then follows.
\end{proof}

Now, we use Proposition~\ref{prop:thm1.3+} to prove Theorem \ref{thm:discrete}.

\begin{proof}[Proof of Theorem \ref{thm:discrete}]
    To obtain the lower bounds in \eqref{eq:dsub-} and \eqref{eq:dsub+}, just note that
    $$\mathbb P[0\connect{\varphi_N\ge h}\partial B_{N/2}]\ge \mathbb P[0\connect{\mphi_N\ge h}\partial B_{N/2}],$$
    and apply Proposition \ref{thm:metric_graph_bulk}.

    Now we turn to upper bounds. We first deal with case $h\le\sqrt{\log N}$. By Proposition~\ref{prop:thm1.3+}, there exists some $\rho>0$ such that
    $$\mathbb P[0\connect{\varphi_N\ge h}\partial B_{N/2}]\lesssim\wt\eta_{N,N/2}^{h,\rho}\lesssim\eta_N^{h,\rho(\log 2)^{-1}},$$
    where the function $\eta_N^{h,\rho}$ is given by \eqref{eq:etanhdef}. This
    gives the desired bounds.
    When $h>\sqrt{\log N}$, we can proceed as above with $\mathcal{A}_{N/2}$ (defined in \eqref{eq:calA}) replaced by the following event 
    $$\begin{aligned}
        \hat{\mathcal{A}}_{N/2}:=\Big\{&\overline M_t-\overline M_0\ge c_8(\frac12 h-3c_1)\langle\overline  M\rangle_t-\frac {c_8(3c_1+\frac12h+\xi_N^h\sqrt{\log N})}{\log N},\\
    &\forall t \text{ satisfying } \ c_7(\log N)^{-1}<\langle \overline M\rangle_t<c_6(\log N-\log k)^{-1}\Big\},
    \end{aligned}$$
    and apply the following inequality, analogous to \eqref{eq:0toN/2_break},
    $$\mathbb P[0\connect{\varphi_N\ge h}\partial B_{N/2}]\le \mathbb P[\mathcal H_N^h]+\mathbb P[\hat{\mathcal A}_{N/2}].$$
    In fact, by Proposition \ref{prop:cH_bound}, $\mathbb P[\mathcal H_N^h]\lesssim e^{-ch^2}$ for some $c>0$.
    Also, for some $c',c''>0$,
    \begin{equation}
    \begin{split}
        \mathbb P[\hat{\mathcal A}_{N/2}]\;&\le \mathbb P\Big[B_{\frac{c_6}{\log 2}}\ge c_8(\frac12 h-3c_1)\frac{c_6}{\log 2}-\frac {c_8(3c_1+\frac12h+\xi_N^h\sqrt{\log N})}{\log N}\Big]\\
        \;&\le \mathbb P\Big[B_{\frac{c_6}{\log 2}}\ge c'h-c_8\frac{\xi_N^h}{\sqrt{\log N}}\Big]\lesssim e^{-c''h^2}.
    \end{split}    
    \end{equation}
         This yields the desired upper bounds on $\mathbb P[0\connect{\varphi_N\ge h}\partial B_{N/2}]$.
\end{proof}

\subsection*{Sketch of proof of \eqref{eq:chemical}.}
 This estimate on the chemical distance on $E_N^{\ge h}$ can be obtained by adapting the argument from \cite{DW20}, the corresponding result for $\wt E_N^{\ge h}$. All ingredients therein can be easily adapted with the metric-graph GFF replaced by the corresponding arguments for discrete GFF, except for \cite[Lemma 24]{DW20}, whose counterpart for the discrete GFF is stated and proved below.
\begin{lemma}
    For any $h\in\mathbb R$ and $0<\alpha<\beta<\gamma<1$, there exists some $c=c(h,\alpha,\beta,\gamma)>0$ such that for all $j\in\mathbb N$, $k\ge k_0:=(\beta-\alpha)N/\sqrt{\log N}$ and $v\in\partial B_{\alpha N+k}$,
    \begin{equation}\label{eq:additional}
        \mathbb P\left[v\connect{\varphi_N\ge h} B_{\alpha N}\,\Bigg|\,\varphi_{N,v}=h+2^j\sqrt{\log N}\right]\le \frac{c2^j}{\sqrt{\log N}}\sqrt{\log N-\log k}.
    \end{equation}
\end{lemma}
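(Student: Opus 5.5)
We will run the same exploration-martingale argument as in Proposition~\ref{prop:thm1.3+}, now with source $v$ instead of $0$. Let $(\mathcal I_t)_{t\ge0}$ be the discrete exploration of $E_N^{\ge h}$ with $\mathcal I_0=\{v\}$. Let $\tau:=\inf\{t:\mathcal I_t\cap B_{\alpha N}\neq\emptyset\}$, so that the event in \eqref{eq:additional} is $\{\tau<\infty\}$. We keep $U=\partial B_{\lfloor\frac34 N\rfloor}$ when $\gamma<\frac34$. Otherwise we choose $U=\partial B_{\lfloor\gamma' N\rfloor}$ with $\gamma<\gamma'<1$, so that $U$ stays at macroscopic distance from everything explored before $\tau$. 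The martingale is $\overline M=\overline M_U$.

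\textbf{Harmonic-measure estimates.} The first step is the analogues of \eqref{eq:varbound} for this geometry. At time $0$ we have $\overline H_N(U,\{v\})\asymp(\log N)^{-1}$, since $v$ is at macroscopic distance from $U$ and from $\partial B_N$. At time $\tau$ the explored set is connected, contains $v$, and has diameter at least $k$. By the Beurling estimate (Lemma~\ref{lem:beurling}) and the harmonic-measure asymptotics preceding Lemma~\ref{lem:hitting}, applied around $v$ instead of $0$, this gives $\overline H_N(U,\mathcal I_\tau)\gtrsim(\log N-\log k)^{-1}$. Combined with \eqref{eq:har_var2}, it follows that on $\{\tau<\infty\}$,
\[
\langle\overline M\rangle_\tau\ge c(\log N-\log k)^{-1}.
\]
We also introduce $\sigma_1':=\inf\{t:\mathcal I_t\not\subset B(v,\sqrt N)\}$, which satisfies $\langle\overline M\rangle_{\sigma_1'}\le c'/\log N$. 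Because $k\ge k_0\gg\sqrt N$, we have $\sigma_1'\le\tau$.

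\textbf{Replacing the terminal condition.} The second step substitutes for \eqref{eq:terminal_condition} via the analogue of Proposition~\ref{prop:cH_bound}. We show that the event
\[
\mathcal H':=\bigl\{\exists\, t\in[\sigma_1',\tau):\ \overline M_t<(h-3c_1)\overline H_N(U,\mathcal I_t)\bigr\}
\]
has probability $\lesssim N^{-1}$ under $\mathbb P[\,\cdot\mid\varphi_{N,v}=h+2^j\sqrt{\log N}]$. Since $h$ is fixed here, only the case $h\le\sqrt{\log N}$ is needed. The proof is verbatim that of Proposition~\ref{prop:cH_bound}:
\begin{itemize}
\item the mean bound \eqref{eq:mean} comes from Lemma~\ref{lem:pin}, which applies because $\mathcal I_t\subset B_{\gamma' N}\subset \wt B_{\frac34 N}$ up to adjusting constants;
\item the fluctuation bound comes from Lemma~\ref{lem:moment}, with $\xi(U,I)\lesssim N^{-a}$ by Beurling since $I$ has diameter at least $\sqrt N$;
\item the within-step oscillation events $F_k$ are handled by Dubins--Schwarz;
\item a union bound over the $O(N^2)$ integer times finishes the argument.
\end{itemize}

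\textbf{Reduction to Brownian motion.} Off $\mathcal H'$, exactly as in \eqref{eq:barMt}, we get for $t\in[\sigma_1',\tau)$
\[
\overline M_t-\overline M_0\ge c_8(h-3c_1)\langle\overline M\rangle_t-\frac{c_8(3c_1+2^j\sqrt{\log N})}{\log N}.
\]
By Lemma~\ref{lem:dubins}, the probability is then bounded by that of a Brownian motion staying above a line of slope $m=c_8(h-3c_1)$ and intercept $-b$ over the time window $(c_7/\log N,\ c_6(\log N-\log k)^{-1})$. Here the starting point is shifted by the Gaussian increment $B_{c_7/\log N}$, and $b\asymp(2^j+O(1))/\sqrt{\log N}$. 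Lemma~\ref{lem:compute}, together with \eqref{eq:asymp+} and \eqref{eq:asymp-}, gives a bound
\[
\lesssim b\,\sqrt{\log N-\log k}\,\bigl(1\vee|m|\bigr),
\]
averaged over the Gaussian shift, which contributes an $O(1)$ factor. The exponential factor in \eqref{eq:asymp+} is dropped, since we only need an upper bound with $c$ depending on $h$. This yields $c(h)\,2^j\sqrt{\log N-\log k}/\sqrt{\log N}$, plus the error $N^{-1}$ from $\mathcal H'$. That error is absorbed because the right-hand side of \eqref{eq:additional} is at least of order $(\log N)^{-1/2}$.

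\textbf{Main obstacle.} The hard step is the first one. We need the harmonic-measure lower bound at $\tau$ uniformly over $v\in\partial B_{\alpha N+k}$ and over all $k\ge k_0$, including $k$ close to $(\gamma-\alpha)N$. We also need Lemma~\ref{lem:pin} and the Beurling bounds to remain valid for clusters away from the origin. Here we would use translation of the estimates around $v$ and the fact that $\mathcal I_\tau\subset B_{\gamma N}$ lies macroscopically inside $B_N$.
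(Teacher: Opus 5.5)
\textbf{Gap: the exploration before $\tau$ is not confined.} Your outline (exploration from $v$, an analogue of Proposition~\ref{prop:cH_bound}, then a Dubins--Schwarz reduction) is the right one, but the geometry does not hold as written. You stop the exploration at $\tau$, the hitting time of $B_{\alpha N}$, and assert that everything explored before $\tau$ stays in $B_{\gamma N}$ (or $B_{\gamma' N}$), at macroscopic distance from $U$ and $\partial B_N$. Nothing forces this. $\mathcal I_t$ is the chemical ball of radius $t$ around $v$ in $E_N^{\ge h}$, and the cluster of $v$ can run outward, across $U$ and up to $\partial B_N$, long before it reaches $B_{\alpha N}$ (if it ever does).

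Every quantitative input you import needs this confinement:
\begin{itemize}
\item the comparison \eqref{eq:har_var2} between $\langle\overline M\rangle$ and increments of $\overline H_N(U,\cdot)$, whose derivation in \eqref{eq:har_var} uses that the explored points are far from $U$;
\item Lemma~\ref{lem:pin}, which needs the boundary vertices in $\wt B_{\frac34N}$ so that \eqref{eq:constraint} holds;
\item Lemma~\ref{lem:moment}, which assumes $I\subset B_{N/2}$ and $U\subset A_{\frac58N,\frac78N}$.
\end{itemize}
So neither your bound on $\mathcal H'$ over $[\sigma_1',\tau)$ nor the drift inequality fed into Lemma~\ref{lem:dubins} is justified. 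The claim ``$\mathcal I_\tau\subset B_{\gamma N}$'' in your last paragraph is simply false.

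\textbf{The fix (the paper's route).} Replace the target by one that confines the exploration. Since $\mathrm{dist}(v,B_{\alpha N})\ge k$, set $k':=k\wedge\frac{1-\gamma}{2}N$; then
\[
\{v\connect{\varphi_N\ge h}B_{\alpha N}\}\subset\{v\connect{\varphi_N\ge h}\partial B(v,k')\}.
\]
Stop at $\sigma'$, the hitting time of $\partial B(v,k')$, so the explored set stays in $B(v,\frac{1-\gamma}{2}N)$. Center the observable at $v$, taking $U'=\partial B(v,\frac34(1-\gamma)N)$. The argument is then a translate of Proposition~\ref{prop:thm1.3+} and gives the bound $2^j\sqrt{\log N-\log k'}/\sqrt{\log N}$. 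Finally,
\[
\log N-\log k'\le \log N-\log k+\log\tfrac{2}{1-\gamma}\le C(\gamma)\,(\log N-\log k),
\]
because $k\le\gamma N$.

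Alternatively, you can keep your origin-centred $U=\partial B_{\gamma' N}$ and stop at $\tau\wedge\tau_{\rm out}$, where $\tau_{\rm out}$ is the hitting time of $\partial B_{\gamma'' N}$ with $\gamma<\gamma''<\gamma'$. On the one-arm event the explored set then still has diameter at least $k\wedge(\gamma''-\gamma)N$, which is enough for the lower bound on $\langle\overline M\rangle$.

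With either modification, the rest of your outline goes through as you describe: the $N^{-1}$ bound on $\mathcal H'$, the Brownian reduction with $b\asymp 2^j/\sqrt{\log N}$ and $T\asymp(\log N-\log k)^{-1}$, and absorbing the $N^{-1}$ error.
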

The lemma is proved using the same technique as in the proof of Theorem \ref{thm:discrete}.
\begin{proof}
Write $\mathbb P'$ for the above conditional law. In this proof, we fix $h\in\mathbb R$, and the constants introduced later may implicitly depend on parameters $\alpha,\beta,\gamma$ and $h$. Consider $(\mathcal I_t')_{t\ge0}$, the exploration of $E_N^{\ge h}$ with source $\mathcal I_0'=\{v\}$. Let $k'=k\wedge (\frac{1-\gamma}{2}N)$, $U'=\partial B(v,\frac34(1-\gamma)N)$ and  $M'=\overline M_{U'}$. Define the event 
    $$
    \mathcal H':=\left\{\exists t\in[\sigma_1',\sigma')\mbox{ s.t. }M_t'<(h-3c_1)\overline H_N(U',\mathcal I_t')\right\},
    $$
    where $\sigma'=\inf\{t\ge 0:\mathcal I_t'\cap\partial B(v,k')\neq\emptyset\}$, $\sigma_1'=\inf\{t\ge 0:\mathcal I_t'\cap\partial B(v,\sqrt{N})\neq\emptyset\}$.
    Then, similar to Proposition \ref{prop:cH_bound} (when $h\le\sqrt{\log N}$), we conclude $\mathbb P'[\mathcal H']\lesssim N^{-1}$. Thus, parallel to \eqref{eq:0toN/2_break},
    $$
        \mathbb P'\left[v\connect{\varphi_N\ge h} B_{\alpha N}\right]\le \mathbb P'\left[v\connect{\varphi_N\ge h}\partial B(v,k')\right]
        \le \mathbb P'[\mathcal H']+\mathbb P'[\mathcal A_k'].
   $$
    Here,
    \begin{equation}
       \begin{split}
          \mathcal A_k':=\Big\{&M'_t-M'_0\ge c\langle M'\rangle_t-\frac {c'}{\log N}-\frac{c''2^j}{\sqrt{\log N}},\\
    &\forall t\geq 0\mbox{ such that } c_{-}(\log N)^{-1}<\langle M'\rangle_t<c_{+}(\log N-\log k)^{-1}\Big\}, 
       \end{split} 
    \end{equation}
    where $c,c',c'',c_-,c_+>0$ are appropriately chosen parameter-dependent constants. Applying Lemmas~\ref{lem:compute} and~\ref{lem:dubins}, we obtain the desired estimate.
\end{proof}

\subsection{Comparison with the metric-graph case: proof of Theorem \ref{thm:comparison}}\label{subsec:compare}
To establish the discrepancy of the one-arm probabilities, we show that given the exploration of $\wt E_N^{\ge h}$ from $0$, with the restriction that it stops within the annulus $\wt A_{rN,N/4}$ (which happens with probability of order $(\log N)^{-1/2}$), the conditional probability that the corresponding discrete exploration of $E_N^{\ge h}$ reaching $\partial B_{N/2}$ is bounded away from $0$.

Let $\mathcal C_0$ be the cluster in $\widetilde E_N^{\ge h}$ containing $0$ with the convention that $\mathcal C_0=\emptyset$ if $\mphi_{N,0}<h$. Let $\mathcal C_{\rm ex}^{<h}$ be the union of all clusters in $\widetilde E_N^{< h}$ intersecting $\partial\mB_{N+1}$ with the convention that $\mathcal C_{\rm ex}^{<h}=\emptyset$ if $h\le 0$. 
Define the filling of $\mathcal C_0$ by
\begin{equation}\label{eq:C0*}
    \mathcal C_0^*=\{x\in\wt B_{N+1}:H_N(x,\mathcal C_0)=1\}.
\end{equation}
Let $r\in(0,1/4)$ be a parameter to be determined later. Define the event 
\begin{equation}\label{eq:Lambdadef}
\Lambda=\Lambda_{N,h,r}\coloneq\{\mB_{rN}\subset\mathcal C_0^*\subset\mB_{N/4}\}\cap\{\mathcal C_{\rm ex}^{<h}\cap\mB_{\frac34N}=\emptyset\}.    
\end{equation}
Apparently, we have $\Lambda\subset\{0\disconnect{\mphi_N\ge h}\partial B_{N/2}\}$. We will prove Theorem \ref{thm:comparison} by bounding both $\mathbb P[\Lambda]$ and $\mathbb P[0\connect{\varphi_N\ge h}\partial B_{N/2}|\Lambda]$ from below.
\begin{proposition}\label{prop:mio}
    There exist $r\in(0,1/4)$ and $c=c(r,h)>0$ such that for all $N$,
    \begin{equation}\label{eq:lambda}
        \mathbb P[\Lambda]\ge\frac c{\sqrt{\log N}}.
    \end{equation}
\end{proposition}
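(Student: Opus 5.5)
We bound $\mathbb P[\Lambda]$ from below by splitting it into two pieces. The first piece is that the metric-graph cluster of $0$ reaches $\partial B_{rN}$. The second is a conditional event: given that exploration, the cluster is stopped before $\partial B_{N/4}$ by an enclosing structure, and no boundary cluster of $\wt E_N^{<h}$ enters $\mB_{\frac34 N}$.

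\textbf{Step 1: reaching scale $rN$.} By Proposition~\ref{prop:0tok}, applied with $x\in[1,2]$ and then integrated against the law of $\xi_N^h$ exactly as in the proof of Proposition~\ref{thm:metric_graph_bulk}, we get
\[
\mathbb P[0\connect{\mphi_N\ge h}\partial B_{rN}]\gtrsim c(r,h)(\log N)^{-1/2}.
\]
Reaching $\partial B_{rN}$ does not by itself give $\mB_{rN}\subset\mathcal C_0^*$. So we actually ask for more: $\mathcal C_0$ should contain a circuit in $\wt A_{rN,2rN}$ surrounding $\mB_{rN}$. To produce it, run the exploration up to $\tau_{rN}$ and condition on $\mathcal F_{\wt{\mathcal I}_{\tau_{rN}}}$. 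On the unexplored region, the field is a GFF whose boundary values are $\ge h$ on the explored cluster boundary. By FKG and monotonicity in boundary values, the conditional probability is then at least that of two events for a GFF with zero boundary values pushed up by the harmonic extension: a circuit at level $h$ in $\wt A_{rN,2rN}$ (Proposition~\ref{prop:circuit'}, with $k=rN$, $l=2rN$), and a crossing at level $h$ connecting $\partial B_{rN}$ to $\partial B_{2rN}$. The crossing uses an RSW-type lower bound in the annulus, obtained the same way from Lemma~\ref{lem:description} and the loop-soup estimates of \cite{bi2025arm}. Both events are increasing, so FKG lets us multiply their probabilities. This costs only a constant factor $c(r,h)$.

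\textbf{Step 2: the cluster stops and no boundary cluster comes in.} We need $\mathcal C_0^*\subset\mB_{N/4}$ and $\mathcal C_{\rm ex}^{<h}\cap\mB_{\frac34N}=\emptyset$. We would get both by requiring, in addition, that:
\begin{itemize}
\item there is a circuit of $\wt E_N^{<h}$ in $\wt A_{2rN,N/4}$ surrounding $\mB_{2rN}$, which disconnects $\mathcal C_0$ from $\partial B_{N/4}$;
\item there is a circuit of $\wt E_N^{\ge h}$ in $\wt A_{\frac34N,\frac78N}$ surrounding $\mB_{\frac34N}$, which blocks clusters of $\wt E_N^{<h}$ coming from $\partial\mB_{N+1}$.
\end{itemize}
The first is a decreasing event. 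By symmetry $\mphi\mapsto-\mphi$, it is a circuit event at level $-h$, so Proposition~\ref{prop:circuit'} gives it probability at least $c$ unconditionally. The second is increasing and also has probability at least $c$ by Proposition~\ref{prop:circuit'}.

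\textbf{Main obstacle: combining increasing and decreasing events.} The Step 1 event and the outer circuit are increasing, while the stopping circuit is decreasing. FKG therefore does not apply directly, and the natural route is conditional.

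First, explore from outside: reveal the outermost $\ge h$ circuit in $\wt A_{\frac34N,\frac78N}$ together with everything outside it. Inside, by the Markov property, the field is a GFF with boundary value $\ge h$ on that circuit. Next, decouple scales. The region $\wt A_{2rN,N/4}$ is macroscopically separated from $B_{2rN}$ once $r$ is small.

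I would try to use the loop-soup coupling of Lemma~\ref{lem:description}. Condition on no excursion or boundary cluster reaching $\mB_{(1-\delta)N}$; this is the event used in the proof of Proposition~\ref{prop:circuit'}, and it has probability at least $c$. On this event, a single loop of $\wt\Lc^{1/2}_N$ in $\wt A_{2rN,N/4}$ surrounding $\mB_{2rN}$, whose cluster avoids $\mB_{2rN}$ and is assigned sign ``$<h$'', produces the stopping circuit. Its probability is at least $c_1/2$ by \eqref{eq:Hlb}. The sign assignment is independent of the clusters inside $\mB_{2rN}$, which carry the Step 1 event. The remaining difficulty is to show that conditioning on the event of order $(\log N)^{-1/2}$ from Step 1 does not destroy this constant probability. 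For that I would use that the arm event only depends on loops meeting $\mB_{2rN}$, together with the quasi-independence of loops in disjoint regions of the Poisson soup. This estimate is where I expect the real work to lie.
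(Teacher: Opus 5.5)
The gap is the step you yourself flag: you need the decreasing stopping circuit in $\wt A_{2rN,N/4}$ to keep constant probability after conditioning on the increasing arm event, which has probability $\asymp(\log N)^{-1/2}$. The proposal gives no argument for this, so it does not prove the bound. The loop-soup decoupling you sketch does not close it either. In the coupling of Lemma~\ref{lem:description}, the connection part of the arm event is increasing in the loops meeting $\mB_{2rN}$, and the excursion-freeness of the cluster of $0$ depends on that whole cluster. Your requirement that the surrounding loop's cluster avoid $\mB_{2rN}$ is decreasing in exactly those same loops. So the two events are not carried by disjoint families of loops; they pull in opposite directions on the same loops. That is the increasing/decreasing conflict in another form, not a quasi-independence situation.

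There is also a smaller flaw in Step~1. A level-$h$ crossing of $\wt A_{rN,2rN}$ together with $\{0\connect{\mphi_N\ge h}\partial B_{rN}\}$ does not put the circuit into $\mathcal C_0$, because the crossing need not be attached to the cluster of $0$. Simply require $\{0\connect{\mphi_N\ge h}\partial B_{2rN}\}$ instead: it forces $\mathcal C_0$ to meet every circuit in $\wt A_{rN,2rN}$ around $\mB_{rN}$. Plain FKG with $\wt\cC^h_{N,rN,2rN}$ then suffices, with no exploration or RSW crossing bound.

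The missing idea is that no decreasing event is needed. The stopping can be paid for by subtraction, and this is what the freedom to choose $r$ is for. Since $\Lambda$ contains the intersection of the three increasing events with $\{0\disconnect{\mphi_N\ge h}\partial B_{N/4}\}$,
\[
\mathbb P[\Lambda]\ge\mathbb P\Big[\wt\cC^h_{N,\frac34N,\frac78N}\cap\wt\cC^h_{N,rN,2rN}\cap\big\{0\connect{\mphi_N\ge h}\partial B_{2rN}\big\}\Big]-\mathbb P\big[0\connect{\mphi_N\ge h}\partial B_{N/4}\big].
\]
By FKG and Proposition~\ref{prop:circuit'}, the first term is at least $c_0^2\,\mathbb P[0\connect{\mphi_N\ge h}\partial B_{2rN}]\ge c_0^2c\sqrt{|\log 2r|/\log N}$. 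Here $c_0=c_0(h)$ is uniform in $r\le\frac14$, since both annuli have fixed aspect ratio. The constant $c=c(h)$ comes from Proposition~\ref{prop:0tok} after integrating over $\xi_N^h$, and it does not deteriorate as $r\to0$ (for $h>0$, $K=h|\log 2r|^{-1/2}$ only decreases). The subtracted term is at most $C\sqrt{\log 4/\log N}$, with $\sqrt{\log 4}$ replaced by $|h|\vee\sqrt{\log 4}$ when $h\le 0$. The arm probability to scale $2rN$ carries the extra factor $\sqrt{|\log 2r|}$, so choosing $r=r(h)$ small makes the difference $\gtrsim(\log N)^{-1/2}$. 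Put differently: given that $\mathcal C_0$ reaches $\partial B_{2rN}$, it already fails to reach $\partial B_{N/4}$ with conditional probability close to $1$. Neither a stopping circuit nor any decorrelation estimate is required, and this is how the paper argues.
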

\begin{proof}
    Recall the definition of the event $\mathcal C_{N,k,l}^h$ from above Proposition \ref{prop:circuit'}. It suffices to give a lower bound on the probability of the event
    $$\Lambda':=\mathcal C_{N,\frac34N,N}^h\cap\mathcal C_{N,rN,2rN}^h\cap\left\{0\connect{\mphi_N\ge h}\partial \mB_{2rN},0\disconnect{\mphi_N\ge h}\partial \mB_{N/4}\right\},$$
    since apparently, $\Lambda'\subset \Lambda$.
    By Proposition \ref{prop:0tok} and an integration similar to the proof of Proposition~\ref{thm:metric_graph_bulk}, there exist  $c(h),c'(h)>0$ such that
    $$c\sqrt{\frac{|\log r|}{\log N}}<\mathbb P[0\connect{\mphi_N\ge h}\partial B_{2rN}]<c'\sqrt{\frac{|\log r|}{\log N}}.$$
    By Proposition~\ref{prop:circuit'}, there exists $c''=c''(r,h)>0$ such that
    $$\min\left\{\mathbb P[\mathcal C_{N,\frac34N,N}^h],\mathbb P[\mathcal C_{N,rN,2rN}^h]\right\}>c''.$$
    Thus, by FKG inequality, one may choose $r\in(0,\frac14)$ sufficiently small such that
    \begin{align*}
        \mathbb P[\Lambda']\ge\;& \mathbb P[\mathcal C_{N,\frac34N,N}^h]\cdot\mathbb P[\mathcal C_{N,rN,2rN}^h]\cdot\mathbb P[0\connect{\mphi_N\ge h}\partial B_{2rN}]-\mathbb P[0\connect{\mphi_N\ge h}\partial B_{N/2}]\\
        \ge\;&\left((c'')^2c\sqrt{|\log r|}-c'\sqrt{\log4}\right)\cdot(\log N)^{-1/2}\gtrsim\frac{1}{\sqrt{\log N}}.
    \end{align*}
    The conclusion then follows since $\Lambda'\subset \Lambda$.
\end{proof}

From now on till the end of this section, we choose some $r$ (in the definition of $\Lambda=\Lambda_{N,h,r}$ in \eqref{eq:Lambdadef}) such that \eqref{eq:lambda} holds. 
To lighten notation, we use $P^*$ to denote the conditional probability given all the information of GFF at $\mathcal C_0\cup\mathcal C_{\rm ex}^{<h}$ and the occurrence of the event $\Lambda$, that is, 
\begin{equation}\label{eq:P*}
    \mathbb P^*:=\mathbb P\Big[\;\cdot\,\Big |\, \mathcal F_{\mathcal C_0\cup\mathcal C_{\rm ex}^{<h}}, \Lambda\Big],
\end{equation}
and furthermore use $\mathbb E^*$ and ${\rm Var}^*$ to denote the corresponding mean and variance, respectively. 
Recall $\mathcal C_0^*$ in \eqref{eq:C0*} and define 
\begin{equation}\label{eq:Gamma}
    \Gamma:=\{x\in\Z:1\le{\rm dist}(x,\cC_0^*)<2\}.
\end{equation}
For any $A\subset B\subset B_N\setminus(\mathcal C_0\cup\mathcal C_{\rm ex}^{<h})$, write 
\[
\overline H_N^*(\partial B_{\frac58 N},A;B)=\overline H_N(\partial B_{\frac58N},A;B\cup\mathcal C_0\cup\mathcal C_{\rm ex}^{<h}),
\]
and abbreviate $\overline H^*_N(\partial B_{\frac58N},A)$ when $A=B$.
\begin{proposition}\label{prop:gamma_to_N/2}
    For all $h\in \mathbb R$, there exists $c=c(h)>0$ such that 
    \begin{equation}\label{eq:P*Gamma}
        \mathbb P^*[\Gamma\connect{\varphi_N\ge h}\partial B_{N/2}]\ge c.
    \end{equation}
\end{proposition}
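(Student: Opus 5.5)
We will prove the proposition in three steps: bound the mean of the discrete field on $\Gamma$ from below, use this to show that $\Gamma$ carries many sites above level $h$, and then link those sites to a circuit that reaches beyond $\partial B_{N/2}$.

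\emph{Setup.} Under $\mathbb P^*$, the field $\mphi_N$ restricted to the complement of $\mathcal C_0\cup\mathcal C_{\rm ex}^{<h}$ is, by the Markov property, a metric-graph GFF with the following boundary data:
\begin{itemize}
\item it equals $h$ on $\partial\mathcal C_0$ (the boundary points of the metric cluster sit exactly at level $h$);
\item it equals $h$ on the boundary of $\mathcal C_{\rm ex}^{<h}$, which lies outside $\wt B_{\frac34N}$ on $\Lambda$.
\end{itemize}
The only other information is the conditioning that the interior cluster stops. Inside $\mathcal C_0^*\setminus\mathcal C_0$ there may be holes, but they are irrelevant: $\Gamma$ sits at distance in $[1,2)$ from $\mathcal C_0^*$, so it lies outside the filled cluster and adjacent to it.

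\emph{Mean estimate.} The sites of $\Gamma$ are lattice points at distance at most $2$ from a point of $\mathcal C_0$, where $\mphi_N=h$. By the argument of Lemma \ref{lem:pin}, the harmonic extension of the boundary data gives $\mathbb E^*[\mphi_{N,y}]\ge h-c$ for $y\in\Gamma$. The variance of $\mphi_{N,y}$ under $\mathbb P^*$ is $O(1)$, since $y$ is within distance $2$ of a pinned point.

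\emph{Many sites of $\Gamma$ above $h$.} Consider the harmonic average
\[
Y:=\sum_{y\in\Gamma}\overline H^*_N(\partial B_{\frac58N},y;\Gamma)\,\varphi_{N,y}.
\]
The total mass $\overline H^*_N(\partial B_{\frac58N},\Gamma)$ is of order $|\log r|^{-1}$, because $\mathcal C_0^*\supset\wt B_{rN}$, and by the Beurling estimate (Lemma \ref{lem:beurling}) no single site carries much mass. Hence $Y$ has mean at least $(h-c)$ times the total mass and a small relative fluctuation. This yields a positive fraction, in harmonic measure, of sites of $\Gamma$ with $\varphi_N\ge h-c'$.

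This is the step I expect to be the main obstacle. We need sites strictly above $h$, not just near $h$, and we need them to link into a connected discrete path; the metric-graph connection is forbidden on $\Lambda$. The key point is that under $\mathbb P^*$ the discrete field on $\Gamma$ is not constrained to be below $h$, because the metric graph separates $\Gamma$ from $\mathcal C_0$ only through edge interiors. A discrete nearest-neighbour path therefore can jump a thin metric gap where the field dips below $h$ only on an edge interior. Concretely, I would show that for a boundary point $x\in\partial\mathcal C_0$ lying in the interior of an edge $\{u,y\}$, with $u$ in $\mathcal C_0$ and $y\in\Gamma$, the conditional probability that $\varphi_{N,y}\ge h$ is bounded below. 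Indeed $\varphi_{N,y}$ given the boundary data is Gaussian with mean $\ge h-c$ and variance of order $1$, so this probability is at least a constant. The vertex $u\in\mathcal C_0\cap\Z$ is then joined to $y$ by a nearest-neighbour step in $E_N^{\ge h}$, so $0$ is connected to $y$.

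\emph{Reaching $\partial B_{N/2}$.} It remains to connect $\Gamma$ outward to $\partial B_{N/2}$.
\begin{enumerate}
\item \textbf{Circuit.} Conditioned on the information in $\mathbb P^*$, the field in $\wt A_{N/4,\frac34N}$ dominates a GFF with boundary values $h$; the domination follows from the FKG/monotonicity of \cite[Lemma 4.2]{ding2022crossing}. Applying Proposition \ref{prop:circuit'} inside this domain, we get a circuit in $\wt A_{\frac58N,\frac{11}{16}N}$ above level $h$ with probability at least $c$.
\item \textbf{Radial crossing.} Using RSW-type gluing from the circuit estimate together with FKG, we find a crossing above $h-\epsilon$ from near $\mathcal C_0^*$ to that circuit with positive probability.
\item \textbf{Gluing at $\Gamma$.} The crossing is glued to a good site of $\Gamma$ via the local argument in the previous paragraph.
\end{enumerate}
Combining these positive-probability increasing events by FKG under $\mathbb P^*$ (the conditional law is still a Gaussian field with positive correlations, and all the events are increasing) yields \eqref{eq:P*Gamma}.
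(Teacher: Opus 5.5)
There is a genuine gap. The step you call \emph{Radial crossing} is not a routine gluing; it is essentially the whole content of the proposition, and nothing you cite gives it. Circuit estimates such as Proposition \ref{prop:circuit'} produce circuits in annuli lying well inside the domain, away from its boundary, and that is true even if you grant their transfer to the random domain $\wt B_{N+1}\setminus(\mathcal C_0\cup\mathcal C_{\rm ex}^{<h})$. Chaining such circuits with FKG only connects two macroscopic scales in the bulk.

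What is required is a path in $\{\varphi_N\ge h\}$ that \emph{starts in $\Gamma$}, i.e.\ within distance $2$ of $\partial\mathcal C_0^*$. There the conditional field has boundary value $h$ and only $O(1)$ fluctuations. The path must escape to distance of order $N$, through $\asymp\log N$ scales, along a rough random boundary. A scale-by-scale gluing would lose a constant factor per scale and give only a bound decaying in $N$.

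There are further problems with this part of the plan.
\begin{itemize}
\item You aim at a crossing above $h-\epsilon$, which does not imply $\{\Gamma\connect{\varphi_N\ge h}\partial B_{N/2}\}$.
\item The circuit in $\wt A_{\frac58N,\frac{11}{16}N}$ is redundant, since any crossing from $\Gamma$ to it already meets $\partial B_{N/2}$.
\item Your one genuinely discrete ingredient, gluing across a single edge $\{u,y\}$ with $u\in\mathcal C_0$, concerns a vertex at distance $<1$ from $\mathcal C_0^*$. That vertex is in $\Gamma'$, not $\Gamma$. This is the ingredient used later, in \eqref{eq:givenbeta*}, to join $0$ to $\Gamma$ in the proof of Theorem \ref{thm:comparison}; it says nothing about $\Gamma$ reaching $\partial B_{N/2}$.
\item For $h\ge0$, the conditional field off $\mathcal C_0\cup\mathcal C_{\rm ex}^{<h}$ under $\mathbb P^*$ is a GFF with boundary value $h$, so its mean is exactly $h$. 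Lemma \ref{lem:pin} is not needed there, and your ``many sites above $h-c'$'' step is never used.
\end{itemize}

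The missing idea is global and avoids constructing any crossing. Explore $E_N^{\ge h}$ from the source $\Gamma$ under $\mathbb P^*$ and follow $\overline M_t=\mathbb E^*[\overline X_{\partial B_{\frac58N}}\mid\mathcal F_{\mathcal I_t}]$.
\begin{itemize}
\item For $h\ge0$, Lemma \ref{lem:dubins} shows $\overline M_t-h$ is a time-changed Brownian motion observed from time ${\rm Var}^*(\overline M_0)$ on.
\item Since $\overline H_N^*(\partial B_{\frac58N},\Gamma)\asymp1$, one has $\overline H_N^*(\partial B_{\frac58N},\mathcal I_t)\gtrsim\langle\overline M\rangle_t+{\rm Var}^*(\overline M_0)+1$.
\item The key input is a reverse entropic repulsion. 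If the discrete exploration stops before $\partial B_{N/2}$, its outer boundary vertices lie below $h$ without being pinned at $h$. Each therefore has conditional mean below $h$ by an amount bounded from below, unless its mean given the explored values above $h$ is large (see \eqref{eq:Ephi}).
\item A harmonic-weighted Markov/Chebyshev argument, together with the smallness of $\xi$ (Beurling, as in \eqref{eq:xi_bound1}), then shows the following. With conditional probability $1-o_N(1)$, the harmonic average of the boundary values of the stopped cluster is at least $\Delta$ below $h$.
\item A Brownian motion ever crosses the line $-c\Delta(t+1)$ with probability $1-p_0<1$ (Lemma \ref{lem:compute}). Hence $\mathbb P^*[\Gamma\disconnect{\varphi_N\ge h}\partial B_{N/2}]\le 1-p_0/2$.
\end{itemize}
This is exactly where the discrete/metric discrepancy enters quantitatively, and your proposal has no substitute for it.
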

\begin{proof}
    The proof is based on the same ideas presented in \cite[Section 4]{ding2022crossing}. 
    We assume that $\Lambda$ holds below, which is measurable with respect to $\mathcal C_0\cup\mathcal C_{\rm ex}^{<h}$.  
    
    We first address the case where $h\ge 0$. Conditionally on $\mathcal F_{\mathcal C_0\cup\mathcal C_{\rm ex}^{<h}}$, let $(\mathcal I_t)_{t\ge0}$ be the exploration of $E_N^{\ge h}$ with source $\mathcal I_0=\Gamma$, and consider the martingale $\overline M_t:=\mathbb E^*\big[\overline X_{\partial B_{\frac58N}} \big| \mathcal F_{\mathcal I_t}\big]$. Let $\tilde E$ denote the event $\Big\{\Gamma\disconnect{\varphi_N\ge h}\partial B_{N/2}\Big\}$, and define the stopping time $\sigma':=\inf\{t\ge0:\mathcal I_t\cap\partial B_{N/2}\neq\emptyset\}$. We claim that there exists $\Delta>0$, such that
    \begin{equation}\label{eq:P*F}
        \mathbb P^*[F\mid\tilde E]=1-o_N(1),
    \end{equation}
    where $$F:=\{\exists 0<t<\sigma'\ \mbox{ s.t. }\overline M_t<h-\Delta\cdot \overline H_N^*(\partial B_{\frac58N},\mathcal I_t)\}.$$
    Note that $\Lambda$ entails $\Gamma\subset B_{N/2}$ and $\mathcal C_{\rm ex}^{<h}\cap\wt B_{\frac34N}=\emptyset$.
    Therefore, we have $\overline H_N^*(\partial B_{\frac58N},\Gamma)\asymp 1$ by Lemma \ref{lem:hitting},
    which in turn gives 
    $$\begin{aligned}
        {\rm Var^*}(\overline M_0)=&\ \sum_{x,y\in\Gamma}\overline H_N^*(\partial B_{\frac58N},x;\Gamma)\overline H_N^*(\partial B_{\frac58N},y;\Gamma)G_{\wt B_{N+1}\setminus(\mathcal C_0\cup\mathcal C_{\rm ex}^{<h})}(x,y)\\
        \lesssim&\ \overline H_N^*(\partial B_{\frac58N},\Gamma)^2\asymp \overline H_N^*(\partial B_{\frac58N},\Gamma).
    \end{aligned}$$
    Moreover, similar to \eqref{eq:har_var}, we have for $0<t<\sigma'$,
    $$\overline H_N^*(\partial B_{\frac58N},\mathcal I_t)-\overline H_N^*(\partial B_{\frac58N},\Gamma)\gtrsim\langle\overline M\rangle_t.$$
    Consequently, there is some constant $c>0$ such that for all $0<t<\sigma'$,
    $$\begin{aligned}
        \overline H_N^*(\partial B_{\frac58N},\mathcal I_t)=&\ \big(\overline H_N^*(\partial B_{\frac58N},\mathcal I_t)-\overline H_N^*(\partial B_{\frac58N},\Gamma)\big)+\frac12\overline H_N^*(\partial B_{\frac58N},\Gamma)+\frac12\overline H_N^*(\partial B_{\frac58N},\Gamma)\\
        \ge&\ c(\langle\overline M\rangle_t+{\rm Var}^*(\overline M_0)+1),
    \end{aligned}$$
    further yielding
    $F\subset\{\exists 0<t<\sigma'\ \mbox{ s.t. }\overline M_t-h<-c\Delta(\langle\overline M\rangle_t+{\rm Var}^*(\overline M_0)+1) \}$.
    {Applying Lemma \ref{lem:dubins} again, we may view $\overline M_t-\overline M_0$ as a time-changed standard Brownian motion. Further noting that $\overline M_0-h$ is a centered Gaussian variable with variance ${\rm Var}^*(\overline M_0)$, we may view the process $\overline M_t-h=(\overline M_t-\overline M_0)+(\overline M_0-h)$ as the part of a standard Brownian motion after time ${\rm Var}^*(\overline M_0)$. Hence, }we obtain
    $$\mathbb P^*[F]\le\mathbb P[\exists t\ge0\ \mbox{ s.t. }B_t\le-c\Delta(t+1
    )]=:1-p_0,$$
    where $p_0>0$ by Lemma~\ref{lem:compute} (sending $T\to\infty$). 
    This combined with \eqref{eq:P*F} gives that for large $N$,
    $$\mathbb P^*[\tilde E]\le\frac{\mathbb P^*[F]}{\mathbb P^*[F|\tilde E]}<1-\frac12p_0.$$
    This implies \eqref{eq:P*Gamma} immediately.

    We next turn to the proof of \eqref{eq:P*F}. Note that it suffices to bound from below the probability of $F$ given $\{\mathcal I_\infty=\wt I\}$, where $I$ is any connected set of vertices satisfying $\Gamma\subset I\subset B_{N/2}\setminus\mathcal C_0$ and $\wt I$ is the metric graph associated with $I$. Moreover, $\{\mathcal I_\infty=\wt I\}$ is measurable with respect to $\beta_I:=(\mathbbm 1_{\{\varphi_{N,x}\ge h\}})_{x\in I}$, then it suffices to show on $\{\mathcal I_\infty=I\}$,
    \begin{equation}\label{eq:cond_beta}
        \mathbb P^*[\overline M_\infty<(h-\Delta)\overline H_N^*(\partial B_{\frac58N},I)\mid\beta_I]=1-o_N(1),\;\; {\rm a.s.}
    \end{equation}
    The strategy for the proof of \eqref{eq:cond_beta} given below is in fact very similar to \eqref{eq:cH_bound1} (but note that the desired inequality is in a reverse direction).
    
    Consider any $b_I\in\{0,1\}^I$ such that $\{
    \beta_I=b_I\}\subset\{\mathcal I_\infty=I\}$. Let $I^+=\{x\in I:b_{I,x}=1\}$, $I^-=I\setminus I^+$, and recall the definition of $D(I)$ from \eqref{eq:intro_D}. Note that we have $D(I)\subset I^-$. Moreover, on the event $\{\beta_I=b_I\}$, under $\mathbb P^*$,
    \begin{equation}\label{eq:M_infty}
        \overline M_\infty=\sum_{x\in D(I)}\overline H_N^*(\partial B_{\frac58N},x;I)\varphi_{N,x}.
    \end{equation}
    By \cite[Lemma 4.2]{ding2022crossing} (comparison of GFFs with different conditions), we get that for all $x\in D(I)$,
    \begin{equation}\label{eq:Ephi}
        \begin{split}
            &\mathbb E^*[\varphi_{N,x}\mid\beta_I=b_I,\mathcal F_{I^+}]\le \mathbb E^*[\varphi_{N,x}\mid\varphi_{N,x}< h,\, \beta_{I^+}=b_{I^+},\, \mathcal F_{I^+}]\\
            \le\;& \mathbb E^*[\varphi_{N,x}\wedge h\mid \beta_{I^+}=b_{I^+},\, \mathcal F_{I^+}]\le h-ce^{-c'm_x^2},
        \end{split}
    \end{equation}
    where constants $c,c'>0$ and 
    \[
    m_x:=\mathbb E^*[\varphi_{N,x}\mid\beta_{I^+}=b_{I^+},\, \mathcal F_{I^+}]-h>0.
    \]
    We claim that $Y':=\sum_{x\in D(I)}\overline H_N^*(\partial B_{\frac58N},x;I)m_x$ satisfies
    \begin{equation}\label{eq:Yprime_upper}
    \mathbb P^*[Y'\le c''\overline H_N^*(\partial B_{\frac58N},I)\mid\beta_I=b_I]=1-o_N(1)
    \end{equation}
    for some constant $c''>0$.
    
    We now prove \eqref{eq:cond_beta} assuming \eqref{eq:Yprime_upper} for the time being.
    By Markov's inequality, $Y'\le c''\overline H_N^*(\partial B_{\frac58N},I)$ implies
    \begin{equation}\label{eq:mx}
        \sum_{x\in D(I)}\overline H_N^*(\partial B_{\frac58N},x;I)\mathbbm1_{m_x\le 2c''}\ge\frac12\overline H_N^*(\partial B_{\frac58N},I).
    \end{equation}
    Combining \eqref{eq:M_infty}, \eqref{eq:Ephi} and \eqref{eq:mx}, we obtain that with probability $1-o_N(1)$,
    $$\begin{aligned}
        \mathbb E^*[\overline M_\infty\mid\beta_I=b_I,\mathcal F_{I^+}]=&\ \sum_{x\in D(I)}\overline H_N^*(\partial B_{\frac58N},x;I)\,\mathbb E^*[\varphi_{N,x}\mid\beta_I=b_I,\mathcal F_{I^+}]\\\le&\ h\overline H_N^*(\partial B_{\frac58N},I)-\sum_{x\in D(I)}ce^{-c'm_x^2}\overline H_N^*(\partial B_{\frac58N},x;I)\\
        \le&\ h\overline H_N^*(\partial B_{\frac58N},I)-ce^{-c'(2c'')^2}\sum_{x\in D(I)}\overline H_N^*(\partial B_{\frac58N},x;I)\mathbbm1_{m_x\le 2c'}\\
        \le&\ (h-2\Delta)\overline H_N^*(\partial B_{\frac58N},I),
    \end{aligned}$$
    where we set $\Delta=\frac14ce^{-c'(2c'')^2}$. 
    Moreover, it is not hard to check that both \eqref{eq:varM} and \eqref{eq:xi_bound1} hold under the measure $\mathbb P^*$ for $I$ in the current context, which further imply that
    \begin{equation}\label{eq:VarM}
        {\rm Var}^*(\overline M_\infty|\beta_I=b_I,\mathcal F_{I^+})=o_N(1)\overline H_N^*(\partial B_{\frac58N},I)^2.
    \end{equation}
    It follows from Chebyshev's inequality that
    \begin{align*}
        \mathbb P^*[\overline M_\infty > (h-\Delta)\overline H_N^*(\partial B_{\frac58N},I)\mid\beta_I]&\le \mathbb P^*[\overline M_\infty - \mathbb E^*[\overline M_\infty\mid\beta_I] > \Delta \overline H_N^*(\partial B_{\frac58N},I)\mid\beta_I]\\
        &\le \frac{o_N(1)\overline H_N^*(\partial B_{\frac58N},I)^2}{(\Delta \overline H_N^*(\partial B_{\frac58N},I))^2}=o_N(1),
    \end{align*}
    finishing the proof of \eqref{eq:cond_beta} and hence of \eqref{eq:P*Gamma}.
    
    It remains to show \eqref{eq:Yprime_upper}. Let $D'=\{x:\overline H_N^*(\partial B_{\frac58 N},x;I^+)>0\}$ and note that $$Y'=\sum_{x\in D'}\overline H_N^*(\partial B_{\frac58N},x;I^+)(\varphi_{N,x}-h).$$
    The desired bound on $Y'$ then follows by adapting the proof of \eqref{eq:cond_beta}. First, observe that every vertex in $D'$ has a nearest neighbor in $I^-$, so an adaption of Lemma \ref{lem:pin} yields that for some $c>0$,
    $$\mathbb E^*[Y'|\beta_I=b_I,\mathcal F_{I^-}]\le c\overline H_N^*(\partial B_{\frac58N},I^+)\le c\overline H_N^*(\partial B_{\frac58N},I).$$
    Second, similar to \eqref{eq:VarM}, we have
    $${\rm Var}^*(Y'|\beta_I=b_I,\mathcal F_{I^-})\le o_N(1)\overline H_N^*(\partial B_{\frac58N},I)^2.$$
    We then conclude by applying Chebyshev's inequality again.

    For the case $h<0$, we consider a new field $\mphi_{N}^-:=\mphi_N-f_h$, where $f_h(x)=|h|\overline H_N^*(x,\partial B_{N+1})$ for all $x\in\wt B_{N+1}\setminus(\mathcal C_0\cup\mathcal C_{\rm ex}^{<h})$, and denote its restriction to the lattice by $\varphi_N^-$. Then, $\mphi_N^-$ is a GFF on $\wt B_{N+1}\setminus(\mathcal C_0\cup\mathcal C_{\rm ex}^{<h})$ with boundary conditions $h$, and we have $\varphi_N\ge \varphi_N^-$. Therefore, it suffices to show that $\mathbb P^*[\Gamma\connect{\varphi_N^-\ge h}\partial B_{N/2}]\ge c$ for some $c>0$. This follows from the same argument as above. We conclude the proof of Proposition \ref{prop:gamma_to_N/2}.
\end{proof}
We finally turn to Theorem \ref{thm:comparison}. Proposition \ref{prop:gamma_to_N/2} implies that with uniformly positive $\mathbb P^*$-probability (recall that $\mathbb P^*$ is defined in \eqref{eq:P*}), there exists a ``pivotal vertex'' for the event $\{0\connect{\varphi_N\ge h}\partial B_{N/2}\}$. Thus, it remains to prove that the cost of ``opening'' that pivotal vertex is a constant (c.f.\ \eqref{eq:givenbeta*} below). 
\begin{proof}[Proof of Theorem \ref{thm:comparison}]

    Recall $\mathcal C_0^*$ and $\Gamma$ in \eqref{eq:C0*} and \eqref{eq:Gamma} respectively, and further define 
\[
\Gamma':=\{x\in\Z:0<{\rm dist}(x,\mathcal C_0^*)<1\}.
\] 
    Note that on $\{\Gamma\connect{\varphi_N\ge h}\partial B_{N/2}\}$, we may choose some random vertex $v\in\Gamma'$ such that $0$ is connected to $\partial B_{N/2}$ in $E_N^{\ge h}$ whenever $\varphi_{N,v}\ge h$ (we pick one deterministically if there are multiple candidates).
    Let
    $$\beta^*:=(\mathbbm 1_{\varphi_{N,x}\ge h})_{x\in B_N\setminus(\mathcal C_0\cup \mathcal C_{\rm ex}^{<h}\cup\Gamma')}.$$
    Then $\{\Gamma\connect{\varphi_N\ge h}\partial B_{N/2}\}$ and the site $v$ are measurable with respect to $\beta^*$. We now claim that it suffices to show that there exists $c=c(h)>0$ such that 
    \begin{equation}\label{eq:givenbeta*}
        \mathbb P^*[\varphi_{N,v}\ge h|\beta^*]\ge c.
    \end{equation}
    Assuming \eqref{eq:givenbeta*}, one has,
    \begin{equation*}
    \begin{split}
        \mathbb P^*[0\connect{\varphi_N\ge h}\partial B_{N/2}]&\;\ge\mathbb P^*[\Gamma\connect{\varphi_N\ge h}\partial B_{N/2},\ \varphi_{N,v}\ge h]\\&\;=\mathbb E^*\left[\mathbbm1_{\{\Gamma\connect{\varphi_N\ge h}\partial B_{N/2}\}}\mathbb P^*[\varphi_{N,v}\ge h\mid\beta^*]\right]\ge c,
    \end{split}    
    \end{equation*}
     where the last step follows from \eqref{eq:givenbeta*} and Proposition \ref{prop:gamma_to_N/2}.
    Hence, $\mathbb P\big[0\connect{\varphi_N\ge h}\partial B_{N/2}\big|\Lambda\big]\ge c$, which, combined with Proposition \ref{prop:mio}, yields
    $$\mathbb P\big[0\connect{\varphi_N\ge h}\partial B_{N/2}\big]-\mathbb P\big[0\connect{\mphi_N\ge h}\partial B_{N/2}\big]\ge\mathbb P[\Lambda]\ \mathbb P\big[0\connect{\varphi_N\ge h}\partial B_{N/2}\big|\Lambda\big]\ge \frac{c}{\sqrt{\log N}}$$
    for some $c=c(h)>0$, proving the theorem.
    
    We now prove \eqref{eq:givenbeta*}. Let $\delta={\rm dist}(v,\mathcal C_0^*)$. By the Markov property of the GFF, we have the orthogonal decomposition
    $\varphi_{N,v}=Y+Z$, where $Y$ is a centered Gaussian variable independent of $\mathcal F_{B_N\setminus(\mathcal C_0\cup\mathcal C_{\rm ex}^{<h}\cup\Gamma')}$, such that ${\rm Var}^*(Y)\ge c\delta$ for some universal constant $c>0$, and $Z=\mathbb E^*[\varphi_{N,v}\mid\mathcal F_{B_N\setminus(\mathcal C_0\cup\mathcal C_{\rm ex}^{<h}\cup\Gamma')}]$. By Lemma \ref{lem:pin}, $\mathbb E^*[Z\mid\beta^*]\ge h-c_1\delta^{\frac12}$.
    On the other hand, it is not hard to see
    ${\rm Var}^*(Z\mid\beta^*)\le{\rm Var}^*(Z)\le c'\delta^2$ for some universal $c'<\infty$. Combined, 
    we get that both $Y\ge 2c_1\delta^{\frac12}$ and $Z\ge h-2c_1\delta^{\frac12}$ occur with positive probability under $\mathbb P^*[\cdot\mid\beta^*]$. The combination of these two events gives $\varphi_{N,v}\ge h$, implying \eqref{eq:givenbeta*} and finishing the proof.  
\end{proof}

\section*{Acknowledgments}
We would like to thank Jian Ding and Zhenhao Cai for inspiring discussions. YB and XL are supported by National Key R\&D Program of China (No.\  2021YFA1002700). YG is supported by the startup fund from Westlake University.

	\bibliographystyle{plain}
	\bibliography{references}

\end{document}